\documentclass[12pt,a4paper,reqno]{amsart}
\usepackage[marginratio=1:1,totalwidth=15.75cm,totalheight=22.275cm]{geometry}
\usepackage{amsmath,amsfonts,amsthm,amssymb,graphicx}

\usepackage[shortlabels,inline]{enumitem}

\usepackage{transparent}
\usepackage[dvipsnames]{xcolor}
\usepackage{hyperref}

\usepackage[flushmargin]{footmisc}


\newcommand*{\defeq}{\mathrel{\vcenter{\baselineskip0.5ex \lineskiplimit0pt
                     \hbox{\scriptsize.}\hbox{\scriptsize.}}}%
                     =}

\newcommand{\deriv}{\mathrm{d}}

\numberwithin{equation}{section}

\theoremstyle{plain}
\newtheorem{thm}{Theorem}[section]
\newtheorem{lem}[thm]{Lemma}
\newtheorem{prop}[thm]{Proposition}
\newtheorem{cor}[thm]{Corollary}
\newtheorem{question}[thm]{Question}
\newtheorem{conjecture}[thm]{Conjecture}

\theoremstyle{definition}
\newtheorem{dfn}[thm]{Definition}

\theoremstyle{remark}
\newtheorem{rmk}[thm]{Remark}

\newcommand{\im}{\operatorname{Im}}
\newcommand{\re}{\operatorname{Re}}
\newcommand{\BU}{BU}
\newcommand{\mav}{\operatorname{mav}}
\newcommand{\I}{I}
\newcommand{\K}{K}
\newcommand{\J}{J}
\newcommand{\F}{F}

\newcommand{\D}{D}

\newcommand{\diam}{\operatorname{diam}}

\renewcommand{\theta}{\vartheta}
\renewcommand{\phi}{\varphi}

\newcommand{\eps}{\varepsilon}
\newcommand{\interior}{\operatorname{int}}

\newcommand{\DD}{\mathbb{D}}
\newcommand{\HH}{\mathbb{H}}
\newcommand{\dist}{\operatorname{dist}}
\newcommand{\Log}{\operatorname{Log}}

\newcommand{\Q}{\mathbb{Q}}

\newcommand \C{\mathbb{C}}
\newcommand \CR{\widehat{\mathbb{C}}}
\newcommand \Ch{\widehat{\mathbb{C}}}
\newcommand \N{\mathbb{N}}
\newcommand \R{\mathbb{R}}

\newcommand{\B}{\mathcal{B}}

\newcommand{\Fill}{\operatorname{fill}}

\makeatletter
\renewcommand*{\eqref}[1]{%
  \hyperref[{#1}]{\textup{\tagform@{\ref*{#1}}}}%
}
\makeatother

\begin{document}

\title[Eremenko's conjecture and wandering Lakes of Wada]{Eremenko's conjecture, \\ wandering Lakes of Wada, \\ and maverick points}
\author[{D. Mart\'i-Pete \and L. Rempe \and J. Waterman}]{David Mart\'i-Pete \and Lasse Rempe \and James Waterman}

\address{Department of Mathematical Sciences\\ University of Liverpool\\ Liverpool L69 7ZL\\ United Kingdom\textsc{\newline \indent \href{https://orcid.org/0000-0002-0541-8364}{\includegraphics[width=1em,height=1em]{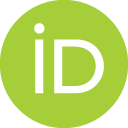} {\normalfont https://orcid.org/0000-0002-0541-8364}}}} 
\email{david.marti-pete@liverpool.ac.uk}

\address{Department of Mathematics\\ University of Manchester\\ Manchester M13 9PL\\ United Kingdom\textsc{\newline \indent \href{https://orcid.org/0000-0001-8032-8580}{\includegraphics[width=1em,height=1em]{orcid2.png} {\normalfont https://orcid.org/0000-0001-8032-8580}}}} 
\email{lasse.rempe@manchester.ac.uk}

\address{Institute for Mathematical Sciences\\ Stony Brook University\\ Stony Brook NY 11794\\ USA\textsc{\newline \indent \href{https://orcid.org/0000-0001-7266-0292}{\includegraphics[width=1em,height=1em]{orcid2.png} {\normalfont https://orcid.org/0000-0001-7266-0292}}}}
\email{james.waterman@stonybrook.edu}

\subjclass[2020]{Primary 37F10; Secondary 30D05, 37B45, 54F15.}
\keywords{complex dynamics, Julia set, Eremenko's conjecture, escaping set, Lakes of Wada, wandering domain, maverick points}

\date{\today}

\dedicatory{Dedicated to Alex Eremenko and Misha Lyubich.}

\begin{abstract}
  We develop a general technique for realising full closed subsets of the complex plane
    as wandering sets of entire functions. Using this construction, we solve
    a number of open problems.
    \begin{enumerate}[(1)]
      \item We construct a counterexample to \emph{Eremenko's conjecture}, a central 
         problem in transcendental dynamics that asks whether
         every connected component of the set of escaping points of a transcendental entire 
         function is unbounded. 
    \item 
    We prove that there is 
      a transcendental entire function for which infinitely many 
   Fatou components share the same boundary. This resolves the long-standing
    problem whether \emph{Lakes of Wada continua}
     can arise in complex dynamics, and
     answers the analogue of a question of Fatou from 1920 concerning Fatou components
     of rational functions. 
   \item We answer a question of Rippon concerning the existence of non-escaping points
     on the boundary of a bounded escaping wandering domain, that is, a wandering Fatou 
     component contained in the escaping set. In fact, we show that the set of
    such points can have positive Lebesgue measure. \label{item:ripponquestion}
       \item We give 
       the  first example of an entire function having a 
      simply connected Fatou component 
       whose closure has a disconnected complement, answering a question of 
       Boc Thaler.
       \end{enumerate}
In view of~\ref{item:ripponquestion}, we introduce the concept of \textit{maverick points}: points  on the boundary of  a wandering domain whose 
  accumulation 
  behaviour differs from that of internal points. We prove that the set of such points has 
  harmonic measure zero, but that it can nonetheless be rather large. For example, it may have positive planar Lebesgue measure. 
\end{abstract}

\maketitle

\section{Introduction}

The iteration of transcendental entire self-maps of the complex plane was initiated by Fatou in 1926~\cite{fatou26} and has received much attention in recent years. 
 A central object, introduced by Eremenko~\cite{eremenko89} in 1989, is the \emph{escaping set} of a transcendental entire function $f$, 
     \[\I(f) \defeq \bigl\{z\in\C\colon f^n(z)\to\infty\text{ as }n\to\infty\bigr\}.\]
    The escaping set is significant in the study of transcendental dynamics since it often contains structures that can be used to understand the global dynamics of $f$.
    Indeed, already Fatou \cite[p.~369]{fatou26} 
    noticed the existence of curves to infinity in the escaping sets of certain
    transcendental entire functions. Eremenko proved that,
    for every transcendental entire function, the connected components of $\overline{I(f)}$ are all unbounded \cite[Theorem~3]{eremenko89}. 
He also states~\cite[p.~343]{eremenko89}
 that it is plausible that  the set $I(f)$ itself has no bounded connected components; 
  this is known as \emph{Eremenko's conjecture} and has remained an open problem since then; see also~\cite[Question 15]{bergweiler93}.
\begin{conjecture}[Eremenko's conjecture]
\label{conj:eremenko}
Every connected component of the escaping set of a transcendental entire function 
is unbounded. 
\end{conjecture}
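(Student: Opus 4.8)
The aim here is not to prove Conjecture~\ref{conj:eremenko} but to \emph{refute} it, by exhibiting a transcendental entire function $f$ with a bounded connected component of $I(f)$. In line with the general technique announced in the abstract, I would try to realise a bounded full continuum $K\subseteq\C$ --- say a point, a Jordan arc, or a closed disc --- as a \emph{wandering set}: a compact set contained in $I(f)$ whose forward images $K,f(K),f^2(K),\dots$ are pairwise disjoint and escape to $\infty$. On its own this does not bound a component of $I(f)$, so I would \emph{additionally} trap $K$: produce a bounded Jordan curve $\gamma$ surrounding $K$ with $\gamma\cap I(f)=\emptyset$. Then every connected subset of $I(f)$ meeting $K$ is confined to the Jordan domain bounded by $\gamma$, since a connected subset of $\C\setminus\gamma$ meeting the inner complementary component lies entirely in it; hence the component of $I(f)$ containing $K$ is bounded and Conjecture~\ref{conj:eremenko} fails. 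This does not contradict Eremenko's theorem that every component of $\overline{I(f)}$ is unbounded: the component of $\overline{I(f)}$ containing $K$ picks up extra points lying in $\overline{I(f)}\setminus I(f)$. Note also that such an $f$ is automatically transcendental, since for a polynomial $I(f)$ is the connected, unbounded basin of $\infty$.

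To build $f$ I would work by approximation. Choose pairwise disjoint closed ``boxes'' $V_0,V_1,V_2,\dots$ receding to $\infty$, with $K\subseteq\interior V_0$ and $\gamma\subseteq\interior V_0$, and with the complement $\CR\setminus E$ of $E\defeq\bigcup_n V_n$ connected and locally connected, so that Arakelyan-type approximation on $E$ is available. Prescribe a holomorphic model $g$ on (the interior of) $E$ so that: (i) $g$ maps $K$ onto a smaller continuum $K_1\subseteq\interior V_1$, then $K_1$ onto $K_2\subseteq\interior V_2$, and so on, with $\diam K_n\to 0$ and the $K_n$ escaping --- the ``channel'' along which $K$ travels to $\infty$; and (ii) $g$ maps everything in $V_0$ outside a thin neighbourhood of $K$ --- in particular a neighbourhood of $\gamma$ --- into a fixed bounded disc $D$ with $g(D)\Subset D$. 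Since $\gamma$ lies far from $K$, there is no conflict between (i) and (ii). Now take $f$ entire with $|f-g|<\eps$ on $E$ for a suitably small positive function $\eps$: points of $K$ escape because their orbits shadow the escaping model orbit, while $f(\gamma)\subseteq D$ and $f(D)\Subset D$ give $f^n(\gamma)\subseteq D$ for all $n\geq 1$, so $\gamma\cap I(f)=\emptyset$, and the two required properties follow.

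The principal obstacle is the quantitative tension between the approximation error and the exploding geometry of the model dynamics. On the escaping channel the true orbit of $K$ must track the model orbit for \emph{all} time, so that the inevitable perturbation does not accumulate enough to halt the escape or to make the images of $K$ collide or fail to reach $\infty$; this forces $\eps$ on $V_n$ to be extremely small relative to the growing separations of the boxes and the growth of $g$, and the $K_n$, the collar of $K$, and the disc $D$ must be arranged so that the shadowing is genuinely robust (the trapped orbits are easier, since contraction in $D$ prevents errors from compounding). Simultaneously, $E$ must retain a complement that is connected and locally connected in $\CR$ while still accommodating the receding boxes and the thin transition layer between $K$ (mapped far away) and $\gamma$ (mapped into $D$) --- a delicate piece of planar bookkeeping. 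Finally, one must understand the boundary dynamics of the wandering set: points of $\partial K$ may shadow neither the escaping nor the trapped model orbit, and it is precisely their anomalous behaviour that produces the \emph{maverick points} of the abstract; for the Eremenko application it is enough to confine the component of $I(f)$ inside $\gamma$, but the sharper applications (Lakes of Wada, Rippon's question) demand genuine control of these points, which is the real endgame.
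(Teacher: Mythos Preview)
Your plan has a genuine obstruction that the paper's construction is specifically designed to avoid. You want a \emph{bounded closed curve} $\gamma$ surrounding $K$, with the model $g$ sending $K$ far away (into $V_1$) and a neighbourhood of $\gamma$ into a small disc $D$. But if $g$ is holomorphic on a domain containing both $K$ and $\gamma$ (as you say: ``holomorphic model $g$ on (the interior of) $E$'', with $K,\gamma\subset\interior V_0$), the maximum modulus principle forces $g(K)$ to lie in the filled image of $g(\gamma)$, hence in $\overline{D}$~--- so $K$ cannot be sent to $V_1$. If instead you try to define $g$ only on two separate pieces, a thin neighbourhood of $K$ and a disjoint neighbourhood of $\gamma$, then because $\gamma$ surrounds $K$ the complement of your approximation set acquires a bounded component (the annular gap between them), and the connectedness hypothesis of Runge/Arakelyan fails. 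Either way the construction cannot start. This is not a quantitative issue of error control; it is a topological obstruction to approximation on sets that separate the plane.

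The paper gets around this by making $K$ \emph{unbounded}: it takes $K$ to be a horizontal ray $\zeta+[0,\infty)$ inside a strip $T_0$, and surrounds it not by a closed Jordan curve but by a nested sequence of closed \emph{half-strips} $K_j$ shrinking to $K$. The boundaries $\partial K_j$ are arcs with both ends at $+\infty$; they separate $K$ from the rest of the plane \emph{without} enclosing a bounded region, so at each inductive step the Arakelyan set (images of $\partial K_j$, images of a slightly smaller half-strip $L_j$, and the accumulated data) has connected and locally connected complement in $\Ch$. The second key difference is that the paper does \emph{not} put all of $K$ into $I(f)$. It arranges that the finite endpoint $\zeta$ escapes while every other point of the ray lies in $BU(f)$; since the $\partial K_j$ are sent into an attracting disc and hence lie in $F(f)\setminus I(f)$, the component of $I(f)\cup J(f)$ containing $\zeta$ is trapped in $K$, and within $K$ the only escaping point is $\zeta$. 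Thus $\{\zeta\}$ is a bounded (singleton) component of $I(f)$. The unboundedness of $K$ is what makes the approximation possible, and the coexistence of escaping and bungee behaviour on $K$ is what makes the resulting component bounded.
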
     

 The escaping set and Eremenko's conjecture have been at the centre of much of the progress that transcendental dynamics has seen during the past two decades. 
  For example, 
  Rippon and Stallard~\cite{rippon-stallard05}
   showed that a certain subset of $I(f)$ that had been introduced by Bergweiler and Hinkkanen \cite{bergweiler-hinkkanen99}, the \emph{fast escaping set} $A(f)$, 
   see~\eqref{eqn:fastescapingset}, has only unbounded components. (In particular, there is always at least one unbounded connected component of $I(f)$.)
     The fast escaping set has since become an important object in the field. 
     In particular, it plays an important role
    in Bishop's construction of a transcendental entire function whose Julia set has Hausdorff 
    dimension one~\cite{bishop18}; this solved a long-standing open question. 
    Work on the fast escaping set has also had a substantial impact in quasiregular 
    dynamics; see~\cite{quasiregularescaping,quasiregularfastescaping}. 
  
 Furthermore, investigation of the escaping set has led to major advances in the study of two
   important classes of transcendental entire functions: the 
   \emph{Eremenko--Lyubich class} $\B$ 
    and the \emph{Speiser class} $\mathcal{S}$. These 
  consist of those transcendental entire functions $f$ for which the set $S(f)$ of \emph{singular values} is   
   bounded resp.\ finite; compare~\cite{eremenko-lyubich87}. 
  For instance, in~\cite{rrrs11}, it was shown that a stronger version of Eremenko's conjecture, also stated in~\cite{eremenko89}~--
    that every escaping point can be connected to infinity by a curve consisting of escaping points~-- holds for a large subclass of $\B$, but
   fails for general $f\in\B$. 
   According to Bishop (personal communication), the question whether such 
    counterexamples can also be constructed in~$\mathcal{S}$ was one of the original
    motivations for his technique of quasiconformal folding~\cite{bishop15}, 
    which has revolutionised the study of the classes $\B$ and $\mathcal{S}$.
   
   Ideas introduced to study the escaping set have had significant impact also outside of transcendental dynamics; for example,
     techniques from~\cite{rrrs11} and~\cite{rempe09} were applied by 
     Dudko and Lyubich~\cite{dudkolyubichmandelbrot} to achieve progress on the question of local connectivity of the Mandelbrot set, while quasiconformal folding
     has led to new results on the geometry of Riemann surfaces~\cite{bishoprempetriangles}. 
          
Eremenko's conjecture has been
 confirmed in a number of cases, while stronger versions have been disproved. In addition to the results
 already referred to above, we mention~\cite{schleicherzimmer03,rempe07b,ripponstallard13,osborneetal19,nicksetal21} and~\cite[Theorem~1.6]{arclike}. 
  On the other hand, it is known that for an entire function $f$, $\I(f)\cup \{\infty\}$ is
 always connected~\cite{rippon-stallard11}, and that unbounded connected components are dense in $\I(f)$~\cite{rippon-stallard05}. 
 Moreover, there is a \emph{quasiregular} map $f\colon\C\to\C$ such that $\overline{\I(f)}$ has bounded components \cite{quasiregularescaping},
   in contrast to the entire case. So the problem is not of a purely topological nature.

The above results show that
  any
 counterexample to Eremenko's conjecture has to be quite subtle, and explain why it has remained open  until now
 despite considerable research
 efforts. In this article, we overcome these difficulties and resolve the conjecture in the negative, constructing a wide range of counterexamples. Recall that a compact set $X\subseteq \C$ is \emph{full} if $\C\setminus X$ is connected.
 
 \begin{thm}[Counterexamples to Eremenko's conjecture]\label{thm:eremenkocounterexample}
   Let $X\subseteq\C$ be a non-empty full and connected compact set. Then there exists a transcendental entire function $f$ such that $X$ is a connected
     component of $\I(f)$. 
 \end{thm}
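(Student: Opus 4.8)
The plan is to realise $X$ as the set of points, within one fixed small neighbourhood, that remain ``eternally trapped'' inside a wandering tower of neighbourhoods marching off to infinity, while every \emph{other} point of that neighbourhood is eventually deflected into a region of the plane carrying no escaping orbits.

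\smallskip

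\textbf{Topological skeleton.} I would first reduce the theorem to the following: construct a transcendental entire function $f$, an open set $U_0 \supseteq X$, and open sets $U_1, U_2, \dots$ with (a) the closures $\overline{U_n}$ pairwise disjoint and $\inf_{w \in U_n}|w| \to \infty$; (b) $f^n(X) \subseteq U_n$ for all $n \geq 0$, and conversely $X = \{\, z : f^n(z) \in U_n \text{ for all } n \geq 0\,\}$; and (c) $(\overline{U_0}\setminus X) \cap \I(f) = \emptyset$. Given these, $X$ is a component of $\I(f)$: by (a) and (b), iterates of any $z \in X$ escape, so $X \subseteq \I(f)$; if $C$ denotes the component of $\I(f)$ containing the connected set $X$, then since $\partial U_0 \subseteq \overline{U_0} \setminus X$ we have $C \cap \partial U_0 = \emptyset$ by (c), so the connected set $C$ cannot meet both $U_0$ and $\C \setminus \overline{U_0}$; as $X \subseteq C \cap U_0 \neq \emptyset$, this forces $C \subseteq U_0$, whence $C \subseteq U_0 \cap \I(f) = X$ by (c) again (together with $X \subseteq \I(f)$), i.e.\ $C = X$. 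Everything now rests on the construction of $f$ and the $U_n$.

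\smallskip

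\textbf{The model.} Here the hypothesis enters: since $X$ is full and connected, for each $\eps > 0$ there is a bounded Jordan domain whose closure lies in $\{z : \dist(z,X) < \eps\}$ and which contains $X$, so $X$ is the intersection of a decreasing sequence of closed Jordan domains. (This exhaustion would be impossible if $X$ had a bounded complementary component, which is exactly where fullness is used; it also supplies the ``shape'' onto which the nested preimages in (b) must pinch.) I would then pick basepoints $a_n \to \infty$ growing very rapidly, a factor $\lambda > 1$, and a sequence $c_n \downarrow 0$; set $X_n \defeq a_n + \lambda^n(X - x_0)$ for a fixed $x_0 \in X$; let $V_n$ be the ``$c_n\lambda^n$-thickening'' of $X_n$ (i.e.\ $a_n + \lambda^n$ times a $c_n$-scaled fixed Jordan neighbourhood of $X-x_0$) and $U_n \supsetneq \overline{V_n}$ a comparably scaled Jordan neighbourhood; rapid growth of $(a_n)$ relative to $\lambda^n$ gives (a). On $V_n$ the model for $f$ is the affine transport $z \mapsto a_{n+1} + \lambda(z - a_n)$, which carries $X_n$ onto $X_{n+1}$ and, because $c_{n+1} < c_n$, maps $V_n$ \emph{over} $V_{n+1}$ with a small overspill inside $U_{n+1} \setminus V_{n+1}$. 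The expansion by $\lambda > 1$ forces the nested preimages $V_0 \supseteq (f|_{V_0})^{-1}(V_1) \supseteq (f|_{V_0})^{-1}\!\big((f|_{V_1})^{-1}(V_2)\big) \supseteq \cdots$ to shrink, and since $c_n \to 0$ they shrink down precisely onto the Jordan exhaustion of $X$; this is what makes the trapped set in (b) equal to $X$ and nothing larger. Away from the tower $\bigcup_n V_n$ — in particular on each annulus $U_n \setminus V_n$ — I would fold $f$ back so that it maps into a fixed region $G$ (say a forward-invariant neighbourhood of an attracting fixed point) on which $f$ has no escaping orbit. Together with the overspill remark, this yields (c): any $z \in U_0 \setminus X$ either leaves $V_0$ under iteration — its distance to $X_0$ is eventually magnified past the shrinking scale $c_k\lambda^k$ — and then lands in some $U_k \setminus V_k$ and hence in $G$, or else already lies in $U_0 \setminus V_0$ and enters $G$ at once; in $G$ it stays forever without escaping.

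\smallskip

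\textbf{Realisation and the main obstacle.} It remains to upgrade this model — prescribed only on $\bigcup_n \overline{U_n}$ together with the background region $G$, and not holomorphic at the ``folds'' — to a genuine transcendental entire function. For this I would invoke the general realisation technique developed in the earlier sections (the quasiconformal folding / approximation machinery), choosing the approximation accurate enough, and the global geometry controlled enough, that the transport on the $V_n$, the fold-back on $U_n \setminus V_n$, and the non-escaping dynamics on $G$ all survive; properties (a)--(c) then hold, and the topological step concludes the proof. I expect the realisation, rather than the (essentially formal) topological and limiting arguments, to be the main obstacle. The tension is genuine: on $V_n$ the function has modulus comparable to $|a_{n+1}|$, yet it must land in the bounded set $G$ on the adjacent annulus, so by the maximum principle the fold-back cannot be abrupt — it must be spread gently over $U_n \setminus V_n$, and the intermediate values $f$ is thereby forced to assume there must be steered clear of the rest of the tower, on pain of creating spurious escaping points near $X$. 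Carrying this out, while simultaneously controlling the preimage geometry finely enough to land exactly on the Jordan exhaustion of $X$ and keeping $f$ entire and transcendental, is exactly what the paper's general technique must accomplish.
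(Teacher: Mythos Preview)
Your proposal has a genuine gap at the realisation step, and it is not a technicality that ``the paper's general technique'' absorbs. The paper's technique is Arakelyan's theorem, which approximates functions that are \emph{already holomorphic} on the interior of the approximation set; it does not use quasiconformal folding. Your model is not of this type. On the bounded Jordan domain $U_n$ you want $f$ to be an affine map of modulus comparable to $|a_{n+1}|$ on $V_n$, yet to take values in the bounded region $G$ on the surrounding annulus $U_n\setminus V_n$. No function holomorphic on $U_n$ does both, so $\overline{U_n}$ cannot be included in the Arakelyan set. If instead you approximate only on $\overline{V_n}$ and on a disjoint compact piece of $U_n\setminus V_n$, the resulting entire function is uncontrolled on the remaining annular gap, and points there may map anywhere~--- in particular back into the tower~--- so your claim (c) that $(\overline{U_0}\setminus X)\cap\I(f)=\emptyset$ cannot be established. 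You identify exactly this tension in your final paragraph, but for a bounded tower it is fatal rather than merely delicate: a closed curve $\partial V_n$ separates $V_n$ from its exterior, so one cannot send a full neighbourhood of $\partial V_n$ into $G$ while simultaneously sending $V_n$ forward, by any model to which Arakelyan applies.

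The idea you are missing, and which the paper uses, is to work with an \emph{unbounded} set. One attaches to $X$ a horizontal ray $\gamma$, sets $K\defeq X\cup\gamma$, and nests $K$ inside closed topological half-strips $K_j\searrow K$. Now $\partial K_j$ is an arc tending to $+\infty$ rather than a closed curve, so (the image of) a closed neighbourhood of $\partial K_j$ and (the image of) a slightly smaller half-strip $L_j\subset\interior(K_j)$ can sit side by side in a strip $T_j$ without one enclosing the other. One may therefore define a model, holomorphic on the interior of the Arakelyan set, that sends the former into the attracting disc $D$ and the latter conformally forward. This makes $\bigcup_j\partial K_j\subset \F(f)\setminus\I(f)$ a barrier separating $K$ from the rest of $\J(f)\cup\I(f)$. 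To isolate $X$ \emph{within} $K$, the construction then arranges (using the strip dynamics of Section~\ref{sec:scaffolding}) that every point of the ray $\gamma=K\setminus X$ lies in $\BU(f)$ while $X\subset\I(f)$. Thus $X$ is separated from $\I(f)\setminus K$ by the barrier and from $K\setminus X$ by the bungee ray, and is therefore a connected component of $\I(f)$.
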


We remark that a bounded connected component of $I(f)$ need not be compact, but any compact connected component is necessarily full. See Proposition~\ref{prop:compact-comp}.

The examples in Theorem~\ref{thm:eremenkocounterexample} are constructed using approximation theory; more precisely, we use
 a famous theorem of 
 Arakelyan on approximation by entire functions 
 (see Theorem~\ref{thm:arakelyan}). The use
 of approximation theory to construct examples in transcendental dynamics has 
 a long history, going back to work of Eremenko and 
 Lyubich from 1987~\cite{eremenko-lyubich87}. We shall prove Theorem~\ref{thm:eremenkocounterexample}
   by developing a new general method for prescribing the dynamical behaviour of an entire function on certain closed
  subsets of the complex plane. We use this procedure to answer a number of further open problems in transcendental dynamics, which we describe in the following. 
  
  Let $f\colon S\to S$ be a transcendental entire
  function or a rational map, where
    $S=\C$ or $S=\Ch\defeq \C\cup\{\infty\}$,
    respectively. The \emph{Fatou set} $\F(f)\subseteq S$ is the set of points whose orbits under $f$
   remain stable under small perturbations.
 More formally, $\F(f)$ is the largest open set on which the iterates of $f$ are equicontinuous with respect
 to spherical distance; equivalently, it is the largest forward-invariant open set that omits at least three points of the Riemann sphere
  \cite[Section~2.1]{bergweiler93}. 
  The
  complement \mbox{$\J(f) \defeq S\setminus \F(f)$}  is called the \emph{Julia set}.
  
These two sets are named after Pierre Fatou and Gaston Julia, who independently laid the foundations of
 one-dimensional complex dynamics in the early 20th century. In his seminal memoir on the iteration
 of rational functions, 
 Fatou posed the following question concerning the structure of the connected components of $\F(f)$, which are 
 today called the \emph{Fatou components} of $f$.

\begin{question}[{Fatou \cite[pp.~51--52]{fatou20}}]
\label{qu:fatou}
If $f$ has more than two Fatou components, can two of these components share the same boundary?\footnote{%
  ``S'il y a plus de deux r\'egions contigu\"es distinctes (et par suite une infinit\'e), deux r\'egions contigu\"es peuvent-elles
   avoir la m\^eme fronti\`ere, sans \^etre identiques?''}
\end{question}
 
Fatou asked this question in the context of rational self-maps of the Riemann sphere, but
it makes equal sense
 for transcendental entire functions, whose dynamical study Fatou initiated in 1926 \cite{fatou26}. We give
 a positive answer to Question~\ref{qu:fatou} in this~setting.
 
\begin{figure}
\includegraphics[width=.54\linewidth]{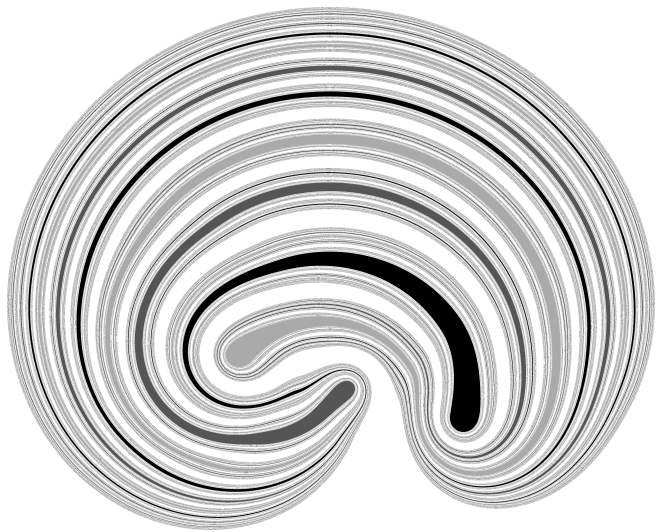} 
\caption{A Lakes of Wada continuum bounding four domains.}
\label{fig:lakes-of-wada}
\end{figure}

\begin{thm}[Fatou components with a common boundary]
\label{thm:fatou}
There exists a transcendental entire function $f$ and an infinite collection of Fatou components of~$f$ that all share the same boundary. In particular, the common boundary of these Fatou components is a Lakes of Wada continuum.
\end{thm}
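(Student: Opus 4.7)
The plan is to combine a classical topological construction of a Lakes of Wada continuum with the general approximation-theoretic prescription technique underlying Theorem~\ref{thm:eremenkocounterexample}.

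First I would construct a connected compact set $X\subset\C$ together with a countable family $(U_n)_{n\ge 1}$ of pairwise disjoint bounded simply connected domains such that $\partial U_n=X$ for every $n$ and $K\defeq X\cup\bigcup_n\overline{U_n}$ is compact, connected, and full. This is the classical Wada ``digging'' procedure going back to Yoneyama and Brouwer: starting from a closed topological disk, at stage $n$ one extends a thin finger of each of $U_1,\dots,U_n$ so that every point of the remaining land lies within distance $1/n$ of $\overline{U_j}$ for each $j\le n$. With care, the fingers can be chosen so that $K$ is an Arakelyan set, possibly after attaching an auxiliary simple arc from $K$ to infinity, which does not affect the Wada property of $X$ itself.

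Next, I would translate copies $K_k\defeq K+a_k$ along a sufficiently rapidly escaping sequence $a_k\in\C$ so that $E\defeq\bigcup_k K_k$ is again an Arakelyan set. On $E$ define the model map $g$ sending each $K_k$ onto $K_{k+1}$ by the translation $z\mapsto z+(a_{k+1}-a_k)$; in particular $X_k\defeq X+a_k\mapsto X_{k+1}$ and $U_n+a_k\mapsto U_n+a_{k+1}$. Arakelyan's theorem produces a transcendental entire function $f$ with $|f-g|<\eps_k$ on $K_k$, for a rapidly decaying sequence $\eps_k>0$. Following the shadowing argument used in the proof of Theorem~\ref{thm:eremenkocounterexample}, one then shows that $X_1\subset\I(f)\subset\J(f)$, while each $U_n+a_1$ is mapped by $f^k$ into a compact subset of $U_n+a_{k+1}$, so $U_n+a_1\subset\F(f)$.

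Finally, since $X_1\subset\J(f)$, the Fatou set $\F(f)$ is contained in $\C\setminus X_1$, whose bounded connected components are precisely the sets $U_n+a_1$. Each $U_n+a_1$ is an open connected subset of $\F(f)$, and the Fatou component containing it is connected and disjoint from $X_1$, so it must equal $U_n+a_1$. Thus the $U_n+a_1$ form an infinite collection of distinct Fatou components of $f$ all with common boundary $X_1$, exhibiting $X_1$ as a Lakes of Wada continuum. The main difficulty I expect is the dynamical step: the approximation must be sharp enough that the delicate Wada topology survives the perturbation and $X_1$ is genuinely forced into $\J(f)$, rather than some of it being absorbed into enlarged Fatou components. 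This requires exactly the quantitative shadowing control developed by the general technique of the paper, together with the careful geometric work in the first step to ensure that the intricate set $K$ can be made into an Arakelyan set without destroying the Wada property of $X$.
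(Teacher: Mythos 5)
Your overall strategy---fill a Lakes of Wada continuum $X$ to obtain a full compact set $K$ with infinitely many interior components all bounded by $X$, then use approximation theory to make $K$ a wandering compactum whose interior components become wandering domains---is exactly the paper's strategy (the paper applies Theorem~\ref{thm:main}, proved by an inductive Runge approximation, to $K=\Fill(X)$). However, there is a genuine gap at the step where you place $X_1$ in the Julia set. You write $X_1\subset \I(f)\subset \J(f)$, but the inclusion $\I(f)\subset\J(f)$ is false in general: escaping points can lie in the Fatou set, and indeed your own construction produces escaping Fatou components $U_n+a_1$. More seriously, your model map $g$ is defined only on $E=\bigcup_k K_k$, so no matter how small the errors $\eps_k$ are, you have no control whatsoever over the dynamics at points near $\partial K_1$ but \emph{outside} $K_1$. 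It is entirely consistent with your construction that some open connected neighbourhood $V\supset K_1$ escapes uniformly; then $V$ would lie in a single Fatou component containing all of $X_1$ together with every $U_n+a_1$, and the conclusion would fail completely. Sharpening the approximation cannot repair this, because the defect is that the approximation domain does not see the complement of $K_1$.

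The paper's essential (and here missing) device is the following: choose a nested sequence of full compact neighbourhoods $K_j$ shrinking down to $K$ (Lemma~\ref{lem:Kn}) together with non-separating, $2^{-j}$-dense sets $P_j\subset\partial K_j$, include the forward images of these sets in the approximation domain at each inductive step, and prescribe that $f^{j+1}(P_j)$ lands in a forward-invariant attracting disc. Every point of $\partial K=X$ is then a limit of points $p_j\in P_j$ with eventually bounded orbits, while $\partial K$ itself escapes, so the iterates fail to be equicontinuous at every point of $\partial K$ and hence $X\subset\J(f)$. Note that this forces the construction to be genuinely inductive---each model map must be defined in terms of the previous approximant, since one must track where the sets $P_j$ have actually been sent---rather than a single application of Arakelyan's theorem to a fixed model map. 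Also, since everything in this argument is compact, Runge's theorem suffices, and your concerns about making $K$ an Arakelyan set or attaching an arc to infinity are unnecessary.
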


Here a \emph{Lakes of Wada
 continuum} is 
   a compact and connected subset of the plane that is the common boundary of three 
   or more disjoint domains (see Figure~\ref{fig:lakes-of-wada}).
  That such continua exist was first shown by Brouwer~\cite[p.~427]{brouwer10};
   the name ``Lakes of Wada'' arises from a well-known construction that 
   Yoneyama~\cite[p.~60]{yoneyama17} attributed to his advisor, 
   Takeo Wada; compare \cite[p.~143]{hocking-young61},~\cite[Section~8]{hubbard-oberstevorth95} or~\cite[Section~4]{Ishii2017}. 
   In this construction, one begins with the closure of a bounded finitely connected domain, which we may think of as an island in the sea. 
   We think of the complementary domains, which we assume to be
   Jordan domains, 
   as bodies of water: the sea (the unbounded domain) and the lakes (the bounded domains).
    One then constructs 
    successive canals, which ensure that the maximal distance from any point on the remaining piece of land to any body of water
    tends to zero, while keeping the island connected; see Figure~\ref{fig:island}. The land remaining at the end of this 
    process is the boundary of each of its complementary domains; in particular, if there are at least three bodies
    of water, it is
      a Lakes of Wada continuum. One may even obtain infinitely many 
    complementary domains, all sharing the same boundary, by introducing new lakes throughout
    the construction.

Lakes of Wada continua may appear pathological, but they occur naturally in the study of 
dynamical systems in two real dimensions; see \cite{kennedy-yorke91}. For example, Figure~\ref{fig:lakes-of-wada} is (a projection of) the Plykin attractor \cite{plykin74} (see also \cite{coudene06}), which is the attractor of a perturbation of an Anosov diffeomorphism of the torus. Hubbard and Oberste-Vorth \cite[Theorem~8.5]{hubbard-oberstevorth95} showed that, under certain circumstances, the basins of attraction of 
H\'enon maps in $\mathbb{R}^2$ form Lakes of Wada.

Theorem~\ref{thm:fatou} provides the first example where such continua
 arise in one-dimen\-sio\-nal \emph{complex} dynamics, 
 answering a long-standing open question.
 The second author
 learned of this problem for the first time in an introductory complex dynamics course
 by Bergweiler in Kiel during the academic year 1998--99; see~\cite[p.~27]{bergweiler-notes}.
  
  \begin{figure}
\includegraphics[width=\linewidth]{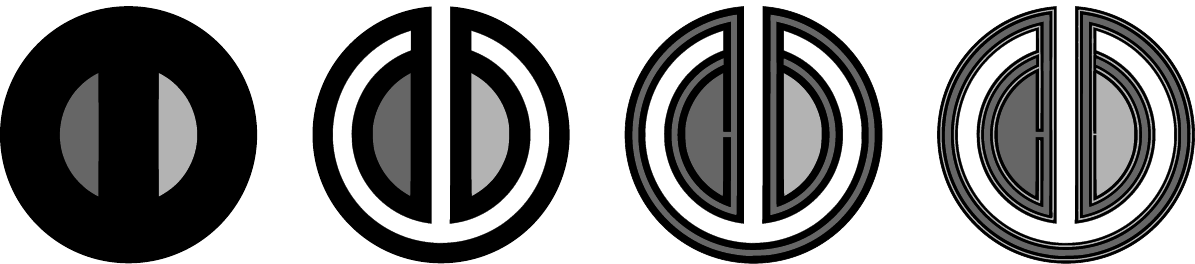} 
\caption{First steps in the construction of a Lakes of Wada continuum. The island is black, the sea is white,
  and the two lakes are dark grey and light grey. The construction proceeds
   inductively, with the distance from any point in the island to any 
   body of water tending to zero.}
\label{fig:island}
\end{figure}

 A Fatou component $U$ is called \textit{periodic} if $f^p(U)\subseteq U$ for some $p\in\N$, and $U$~is \textit{preperiodic} if there exists $q\in\N$ such that $f^{q}(U)$ is contained in a periodic Fatou component; otherwise $U$ is called a \textit{wandering domain} of $f$. The work of Fatou and Julia left open the question
whether wandering domains exist. In 1976, Baker \cite{baker76} showed that transcendental entire functions can have wandering domains, while
 Sullivan~\cite{sullivan85} proved in 1985 that they do not occur for rational functions.
If $f$ is a polynomial, a bounded periodic Fatou component $U$ is either 
 an immediate basin of a finite attracting or parabolic periodic point, or a \emph{Siegel disc}, on which the dynamics is conjugate to rotation by an irrational
 angle. 
 In the former case, it is known that $U$ is a Jordan domain \cite{roesch-yin08}.
 Less is known about the possible topology of Siegel disc boundaries. 
   However, suppose that $f$ is a 
  \emph{quadratic}
  polynomial with a Siegel disc $\Delta$ of rotation number $\alpha$. Petersen and Zakeri~\cite{petersenzakeri04} proved that $\partial \Delta$ is a Jordan curve
   when $\alpha$ belongs to a positive measure set of rotation numbers. 
    Shishikura and Yang~\cite{shishikurayang} and Cheraghi~\cite{cheraghitopology} 
     showed that the same holds for all \emph{high-type} rotation numbers
     (for which all coefficients of the continued fraction expansion exceed a certain
     universal constant).
     Dudko and Lyubich~\cite[Theorem 1.2]{dudkolyubichsiegel} recently announced that, 
     without restriction on the rotation number $\alpha$, the function $f$ is injective on the 
    orbit of $\overline{\Delta}$. This rules out 
    Lakes of Wada boundaries, and provides a negative answer to Question~\ref{qu:fatou}    
     for quadratic polynomials.  Both problems remain
    open for polynomials of degree at least three, but it seems reasonable to expect that, for 
    polynomials and rational maps, the 
 answer to Question~\ref{qu:fatou} is negative. The 
 problem whether the \emph{whole} Julia set can be a Lakes of Wada continuum is 
 related to \emph{Makienko's conjecture} concerning completely invariant domains
 and buried points of rational functions; compare~\cite{sun-yang03} and~\cite{cmmr09}. 
 
In contrast, our motivation for considering 
   Question~\ref{qu:fatou} comes from the study of wandering domains,
    which have been the focus of much recent 
   research; compare~\cite{bishop15,benini-rippon-stallard16,bishop18,martipete-shishikura20,benini-fagella-evdoridou-rippon-stallard}. They    have also recently been
   studied in higher-dimen\-sional complex dynamics; compare \cite{abdpr16,bocthaler21b,hahn-peters21,arosio-benini-fornaess-peters19}. A wandering domain $U$ of an entire function $f$
   is called \emph{escaping} if 
   it is contained in the escaping set $\I(f)$ 
    and it is called \emph{oscillating} if it is contained in the \emph{bungee set}
    \[\BU(f) \defeq \bigl\{z\in\C\setminus \I(f)\colon \limsup_{n\to\infty} |f^n(z)| = +\infty \bigr\}. \] 
   
     We are interested in establishing
        whether the behaviour of points on $\partial U$ is determined
        by that of points in $U$. 
        In all previously known examples of escaping wandering domains, the iterates
        $f^n$ in fact tend to infinity \emph{uniformly} on $U$, and hence on $\partial U$. 
        It is known \cite[Theorem~1.2]{rippon-stallard12} that this must be the case whenever
        $U$ is contained in the fast escaping set $A(f)$. 
       This suggests the following question, which is Problem 2.94 in Hayman and Lingham's 
        \emph{Research Problems in Function Theory}.
        
   \begin{question}[Rippon {\cite[p.~61]{hayman-lingham19}}]\label{qu:rippon}
       If $U$ is a bounded escaping wandering domain of a transcendental entire function $f$, is $\partial U\subseteq \I(f)$? 
   \end{question}

  Here and subsequently, a \emph{bounded wandering domain} is a 
    wandering domain $U$ that is bounded as a subset of the complex plane. 
    One should note the distinction with \emph{orbitally bounded} wandering domains:
    wandering domains whose points have bounded orbits. The existence of orbitally
    bounded wandering domains is a famous open problem; see~\cite[Problem~2.67]{hayman-lingham19}.
     We answer Question~\ref{qu:rippon} in the negative. 
   \begin{thm}[Non-escaping points in the boundary of escaping wandering domains]
   \label{thm:rippon}
      There exists a transcendental entire function $f$ with a bounded escaping
      wandering domain $U$ such that $\partial U\setminus \I(f)\neq\emptyset$.
   \end{thm}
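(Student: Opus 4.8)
The plan is to run the construction that yields Theorem~\ref{thm:eremenkocounterexample} — realising a prescribed closed set as a wandering set of an entire function by approximation, using Arakelyan's theorem (Theorem~\ref{thm:arakelyan}) — but now with input data that attaches a repelling fixed point to the boundary of the wandering domain. The structural fact driving everything is the following: if $z_0$ is a repelling fixed point of $f$ with $z_0\in\partial U_0$, where $U_0$ is a Fatou component, and $U_n$ denotes the Fatou component containing $f^n(U_0)$, then $z_0\in\partial U_n$ for \emph{every} $n\geq 0$. Indeed this holds for $n=0$ by assumption, and if $z_0\in\partial U_n$ then, since $f(U_n)\subseteq U_{n+1}$, continuity of $f$ gives $z_0=f(z_0)\in f(\overline{U_n})\subseteq\overline{U_{n+1}}$, while $z_0\in\J(f)$ forces $z_0\notin U_{n+1}$; hence $z_0\in\partial U_{n+1}$. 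So the \emph{whole} forward orbit of the prospective wandering domain is constrained to accumulate at $z_0$, and the construction must be organised around this.

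Concretely, I would fix $z_0$ (say $z_0=0$) and a multiplier $\lambda=\mu e^{2\pi i p/q}$ with $\mu>1$ large and $\gcd(p,q)=1$, and choose pairwise disjoint bounded Jordan domains $V_0,V_1,\dots$, each consisting of a ``bulk'' — a small disc about a point $c_n$ with $|c_n|\to\infty$ — joined to a thin ``petal'' that approaches $0$ with quadratic (parabolic) contact to one of the $q$ rays through $0$ in the directions $0,\tfrac{2\pi p}{q},\tfrac{4\pi p}{q},\dots$. Because $f^q$ near $0$ is modelled on the strongly expanding linear map $z\mapsto\mu^q z$, it carries such a petal to a petal with the \emph{same} tangent direction but with ``thickness ratios'' divided by $\mu^q$; taking $\mu^q$ larger than the thickness ratio of $V_0$ then guarantees that, within each of the $q$ tangent directions, the successive petals are pairwise disjoint and shrink geometrically onto the corresponding ray. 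On the closed set $E\defeq\{0\}\cup\bigcup_n\overline{V_n}$ (together with the $q$ rays, which lie in its closure, and augmented — as in the proof of Theorem~\ref{thm:eremenkocounterexample} — by auxiliary data ensuring the remaining dynamics is tame and the Fatou components bounded) one prescribes the target map to equal $z\mapsto\lambda z$ near $0$ and, on each $\overline{V_n}$, a holomorphic map carrying $\overline{V_n}$ into the bulk of $V_{n+1}$ and compatible with the germ at $0$. One then checks that $E$ is an Arakelyan set — automatic away from $0$ and $\infty$, and true near $0$ because the total angular width of the petals at radius $r$ from $0$ tends to $0$ with $r$ — and applies the approximation theorem with summable errors small compared to the gaps between the $V_n$ to obtain an entire $f$ with $f(0)=0$, $f'(0)=\lambda$, and $f(\overline{V_n})$ contained in the bulk of $V_{n+1}$ for all $n$.

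The dynamical conclusions then follow by standard arguments. Since $f^n(V_0)$ lies in the bulk of $V_n$ for $n\geq 1$ and these bulks tend to $\infty$, we have $f^n\to\infty$ uniformly on $V_0$; hence $V_0\subseteq\F(f)$, and if $U$ denotes the Fatou component containing $V_0$ then, $\{f^n\}$ being normal on $U$ with every locally uniform subsequential limit identically $\infty$ on the open subset $V_0$, we obtain $f^n\to\infty$ locally uniformly on all of $U$, i.e.\ $U\subseteq\I(f)$. The construction is arranged (exactly as for Theorem~\ref{thm:eremenkocounterexample}) so that $U$ is bounded and the components $U_n$ are pairwise distinct, whence $U$ is a bounded escaping wandering domain. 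Finally, $0$ is a repelling fixed point, so $0\in\J(f)$ and $f^n(0)=0\not\to\infty$, giving $0\notin\I(f)$; and $0\in\overline{V_0}\subseteq\overline{U}$ with $0\notin U$, so that $0\in\partial U$. Thus $\partial U\setminus\I(f)\neq\emptyset$, which answers Question~\ref{qu:rippon} in the negative.

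The main obstacle is the geometry of the petals near $z_0$. The observation above forces infinitely many of the domains $U_n$ to accumulate at the single point $z_0$, and they must simultaneously be pairwise disjoint and be carried one into the next by a map that near $z_0$ is close to the linear map $z\mapsto\lambda z$; but such a linear map, acting on wedge-like approaching regions, permutes their angular positions rigidly, so constant-width approaching regions inevitably collide or repeat. The way out is to let the domains approach $z_0$ with \emph{quadratic} rather than linear contact and to choose $\lambda$ so that its argument is rationally related to $2\pi$ while its modulus is large: then the return map $f^q$ contracts the petals along each of the finitely many tangent directions, so that they nest geometrically and remain disjoint. Carrying this out while preserving the Arakelyan property of $E$ and retaining the global control needed for $U$ to be bounded and wandering is the delicate part of the proof; once the geometry is in place, the normality, boundedness, distinctness and escape statements are routine and parallel the arguments developed for Theorem~\ref{thm:eremenkocounterexample}.
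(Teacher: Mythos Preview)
Your approach is genuinely different from the paper's and contains a real obstruction that your sketch does not overcome.

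The paper does \emph{not} use a periodic boundary point at all. It applies Theorem~\ref{thm:main} to a Lakes of Wada continuum $X$ with bounded complementary components $U_1,U_2$, choosing $Z_{\I}=\{\zeta_1\}\subset U_1$ and $Z_{\BU}=\{\zeta_2\}\subset U_2$. Then $U_1$ is a bounded escaping wandering domain and $U_2$ an oscillating one; since $\partial U_1=\partial U_2=X$ and $\BU(f)\cap X$ has full harmonic measure seen from $U_2$, there are non-escaping points on $\partial U_1$. No iterate of $U_1$ is pinned to any single finite point; the non-escaping boundary points are bungee points, not periodic points.

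Your mechanism, by contrast, forces $0\in\partial U_n$ for every $n$, and therefore $0\in\overline{f^n(V_0)}$ for every $n$. This is incompatible with the claim that ``$f(\overline{V_n})$ is contained in the bulk of $V_{n+1}$'': since $0\in\overline{V_n}$ and $f(0)=0$, continuity gives $0\in f(\overline{V_n})$, yet the bulk of $V_{n+1}$ lies near $c_{n+1}$ with $|c_{n+1}|\to\infty$. So what you must really arrange is $f(\text{petal}_n)\subset\text{petal}_{n+1}$ near $0$. Here the construction breaks down. Working in the linearising coordinate with real multiplier $\mu^q>1$ and petals of quadratic contact between $y=a_n x^2$ and $y=b_n x^2$, the containment $\mu^q W_n\subset W_{n+q}$ together with pairwise disjointness forces $a_n\to 0$ and hence the gap $(a_n-b_{n+q})x^2$ between consecutive petals at fixed distance $x$ from $0$ tends to $0$ as $n\to\infty$. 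But Arakelyan approximation gives only a uniform error $|f-g|<\eps$, and even if you interpolate $f(0)=0$, $f'(0)=\lambda$ so that $f(z)=\lambda z+O(z^2)$, the perturbation at scale $x$ is of order $Cx^2$ with a \emph{fixed} constant $C$, whereas the inter-petal gap at that scale is $(a_n-b_{n+q})x^2$ with coefficient tending to $0$. For all sufficiently large $n$ the error swamps the gap, and you lose the containment $f(\overline{V_n})\subset V_{n+1}$. Higher-order contact does not help: the same scaling argument shows the gap coefficients still tend to $0$.

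A secondary issue you wave past is boundedness of $U$. In the paper's constructions this is enforced by sandwiching $K$ between curves $\partial K_j$ that are sent into an attracting basin; near your repelling fixed point there is no attracting dynamics available, and with all petals accumulating at $0$ it is unclear how to insulate $V_0$ from the Fatou set outside.
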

            We prove Theorem~\ref{thm:rippon}    
    by showing that the function
     from Theorem~\ref{thm:fatou} can be constructed such that 
      at least one of these domains, $U_1$,
      is an escaping wandering domain while another, $U_2$, is an 
      oscillating wandering domain. 
     The set of non-escaping points in $\partial U_1 = \partial U_2$ 
      has full harmonic measure seen from $U_2$ by a result of 
      Rippon and Stallard~\cite[Theorem~1.2]{rippon-stallard11}; in particular, it is non-empty.
      
   Both Theorem~\ref{thm:fatou} and Theorem~\ref{thm:rippon} are
     consequences of a much more general result, which concerns wandering
     compact sets of transcendental entire functions having arbitrary shapes. 
\begin{thm}[Entire functions with wandering compacta]
\label{thm:main}
Let $K\subseteq \C$ be a full compact set. Let
  $Z_{I},Z_{BU}\subseteq K$ be disjoint finite or countably infinite sets such that 
  no connected component of $\interior(K)$ intersects both 
  $Z_{\I}$ and $Z_{\BU}$. Then there exists a transcendental entire function $f$ such that 
   \begin{enumerate}[(i)]
   \item
   $\partial K\subseteq \J(f)$;\label{item:boundaryinJ}
     \item $f^n(K)\cap f^m(K)=\emptyset$ for $n\neq m$;\label{item:wanderingcompactum}
      \item every connected component of $\interior(K)$ is a wandering domain of $f$;\label{item:wanderingdomains}
      \item
   $Z_{\I}\subseteq \I(f)$ and $Z_{\BU}\subseteq \BU(f)$.\label{item:maverick}
   \end{enumerate}
\end{thm}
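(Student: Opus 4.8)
The plan is to obtain $f$ from Arakelyan's approximation theorem (Theorem~\ref{thm:arakelyan}) applied to a combinatorially designed \emph{model}. It suffices to produce a closed set $A\subseteq\C$ whose complement $\widehat{\C}\setminus A$ is connected and locally connected, together with a map $g$ that is continuous on $A$ and holomorphic on $\interior(A)$, such that \emph{every} entire function $f$ with $|f-g|$ smaller than a suitable positive continuous function on $A$ automatically satisfies (i)--(iv); Arakelyan's theorem then supplies such an $f$.

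For the model, I would take a locally finite disjoint union $A=\bigsqcup_{n\geq0}A_n$, where $A_0$ is a thin closed neighbourhood of $K$ carrying a ``collar'' attached all along $\partial K$, and for $n\geq1$ each $A_n$ is a slightly fattened topological copy $K_n=\psi_n(K)$ of $K$; here $\psi_0=\mathrm{id}$ and the $\psi_n$ are injective holomorphic maps defined near $K$, chosen according to a schedule so that $g\defeq\psi_{n+1}\circ\psi_n^{-1}$ carries a neighbourhood of $K_n$ biholomorphically into $\interior(A_{n+1})$, with a definite gap to $\partial A_{n+1}$, so that $g(A_n)\Subset A_{n+1}$. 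The schedule is designed so that the distinguished points $\psi_n(z)$, $z\in Z_{\I}$, satisfy $|\psi_n(z)|\to\infty$, while for $z\in Z_{\BU}$ the points $\psi_n(z)$ return to a fixed disc for infinitely many $n$ but also have $|\psi_n(z)|$ unbounded; because the $\psi_n$ are injective and holomorphic on interiors, any component of $\interior(K)$ meeting $Z_{\I}$ or $Z_{\BU}$ is carried along a single coherent itinerary, which is exactly why a component is not allowed to meet both sets. On the collar---and on the part of $\partial K$ disjoint from $\overline{\interior(K)}$---the map $\psi_1$ is arranged to have a steep gradient, flinging these points into a separate ``highway'' region, also part of $A$, on which $g$ roughly doubles moduli; finally we fix a round disc $D_0$ disjoint from all of $A$.

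Given an entire $f$ with $|f-g|$ small enough on $A$, the verification runs as follows. A routine induction, closed using the decay of the error bound against the gaps $\dist(g(A_n),\partial A_{n+1})$, gives $f^n(K)\subseteq A_n$ for all $n$; since the $A_n$ are pairwise disjoint this is~(ii), and tracking the distinguished points yields $Z_{\I}\subset\I(f)$ and $Z_{\BU}\subset\BU(f)$, which is~(iv). On $\interior(K)$ the iterates $f^n$ take values in $A_n$ and so omit the disc $D_0$, whence $\interior(K)\subseteq\F(f)$ by Montel's theorem; pairwise disjointness of the $A_n$ then rules out any component $V$ of $\interior(K)$ being periodic, since $f^{pk}(V)$ would lie in $A_0\cap A_{pk}=\emptyset$, and (using that a bungee orbit cannot lie in a periodic Fatou component, together with an appropriately aperiodic schedule) also rules out $V$ being preperiodic, so each such $V$ is a wandering domain, giving~(iii). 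Finally, every neighbourhood of a point $x\in\partial K$ meets both $K$, whose orbit grows only at the controlled rate of $\sup_{A_n}|z|$, and the collar, whose points are flung onto the highway and escape faster; hence $\{f^n\}$ is not normal near $x$, so $x\in\J(f)$, which is~(i), and this also forces each component of $\interior(K)$ to be a full Fatou component.

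The main difficulty is the simultaneous geometric realisation of the model. One must position infinitely many pairwise disjoint fattened copies $K_n$ of $K$ so that: (a) the complement of $A$ remains connected and locally connected, as Arakelyan's theorem requires; (b) one has $\sup_{A_n}|z|\to\infty$ while nonetheless, along a sparse set of times, the $A_n$ reach back into a fixed disc---which is forced if $K$ is to contain both an escaping and a bungee point, and which makes $\diam A_n\to\infty$ along those times, so the $K_n$ must be long, pairwise non-interfering ``tentacles''; and (c) consecutive copies admit transition maps $\psi_{n+1}\circ\psi_n^{-1}$ of controlled distortion, so that the approximation errors remain summable and the induction closes. Reconciling (a) with the divergent diameters and the recurrence demanded by (b), while retaining enough aperiodicity in the schedule to exclude (pre)periodic---in particular Baker---behaviour of the interior components, is where essentially all the work lies.
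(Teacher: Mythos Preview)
Your overall plan---design a model and approximate via Arakelyan---is in the right spirit, but there is a genuine gap in your mechanism for~(i), and the paper's proof differs from yours in an essential structural way.

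\textbf{The collar does not work.} You take $A_0$ to be a thin closed neighbourhood of $K$, with $g|_{A_0}=\psi_1$ carrying $K$ biholomorphically to $K_1$ while on the collar $A_0\setminus K$ the same $\psi_1$ has ``steep gradient'' and flings points to a separate highway. But $g$ must be holomorphic on $\interior(A_0)$, and when $K$ has non-empty interior this is a \emph{connected} open set meeting both $\interior(K)$ and the collar; by the identity principle the behaviour on the collar is forced by that on $\interior(K)$, and since the collar is thin, $\psi_1(\text{collar})$ lies near $\partial K_1$, not on a distant highway. If instead you make the collar disjoint from $K$, it sits at a fixed positive distance from $\partial K$, so small enough neighbourhoods of boundary points miss it and your non-normality argument collapses. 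To obtain $\partial K\subset J(f)$ you need points \emph{arbitrarily close} to $\partial K$ whose orbits diverge from those of $K$; a single Arakelyan approximation of a fixed model cannot arrange this.

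\textbf{How the paper does it.} The construction is intrinsically iterative: $f=\lim f_j$ with each $f_j$ obtained from Arakelyan and $\sum\eps_j<\infty$. A fixed scaffolding of horizontal strips $S_k,T_k$ (Section~\ref{sec:scaffolding}) is laid down in advance, on which every $f_j$ is close to $z\mapsto 5z$; this is the conveyor belt between the interesting times $N_j$. At step $j$ one works with a shrinking full compact $K_{m_j}\supset K$: a dense finite set $P_{m_j}\subset\partial K_{m_j}$ is, after $N_j$ iterates, mapped into an attracting disc $D$, while a strictly smaller $L_j\subset\interior(K_{m_j})$ is sent forward into $S_0$. Because this separation takes place in $T_j$ (a fresh strip for each $j$), the two images are disjoint pieces of that step's Arakelyan set, with no analyticity conflict. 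As $j\to\infty$ the boundaries $\partial K_{m_j}$ Hausdorff-converge to $\partial K$, yielding~(i). The $Z_I/Z_{BU}$ split is likewise handled step by step via Lemma~\ref{lem:hyperbolicdistance} and the conformal map of Proposition~\ref{prop:spikes}, which at stage $j$ sends one designated $\zeta_j\in Z_{BU}$ to bounded real part while the remaining marked points go far right. Your single-shot scheme has no substitute for this peeling process.
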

     
 Theorem~\ref{thm:main} is inspired by a previous
  result of Boc Thaler~\cite{bocthaler21}. Let $U\subseteq \C$ be a bounded domain such that $\C\setminus \overline{U}$ is connected,
    and such that furthermore $U$ is \emph{regular} in the sense that $U = \interior(\overline{U})$. Using a version of Runge's 
    approximation theorem, Boc Thaler proved that there is a transcendental entire function~$f$ for which $U$ is a wandering domain on which the iterates of $f$ tend to~$\infty$ \cite[Theorem~1]{bocthaler21}. He then asked the following. 

\begin{question}[{Boc Thaler \cite[p.~3]{bocthaler21}}]
\label{qu:boc-thaler}
Is it true that the closure of any bounded simply connected Fatou component of an entire function has a connected complement? 
\end{question}

 If $U$ is a bounded domain such that $\partial U$ is a Lakes of Wada continuum, then the complement of $\overline{U}$ has at least two
   connected components by definition. Hence 
   the closure of the Fatou component from Theorem~\ref{thm:fatou} has a disconnected complement, answering Question~\ref{qu:boc-thaler} in the negative.

 \begin{rmk}\label{rmk:Jcomponents}
\begin{enumerate*}[(i)]
\item\label{item:Jcomponents} We do not know whether in Theorem~\ref{thm:main} one can additionally ensure that every connected component of $\partial K$ is a connected component of $J(f)$. This would only be possible if $f$ had multiply connected wandering domains limiting on $\partial K$.  \\

\item Certain \textit{unbounded} domains can also be realised as wandering domains using similar techniques; see Remark~\ref{rmk:unbounded-wd}     for the case of a half-strip.
\end{enumerate*}
 \end{rmk}

   In the case where $Z_{\BU}=\emptyset$ (which suffices 
   for Theorem~\ref{thm:fatou}), 
   we can prove Theorem~\ref{thm:main} by a similar proof as 
   Boc Thaler's, but applying a subtle change of point of view: 
   instead of beginning with a simply connected domain and approximating its closure, as in~\cite{bocthaler21}, 
   we start the construction with the full compact set $K\subseteq \C$ and consider
   its interior components. (See Theorem~\ref{thm:uniform}.) In this case, we can even ensure that $K$ belongs to the fast escaping set $A(f)$;
   see Proposition~\ref{prop:fastescaping}. The only previously known examples of fast escaping bounded 
   simply connected wandering domains are due to Bergweiler~\cite{bergweiler11} and Sixsmith~\cite{sixsmith12}. 
   This construction can be used to 
   provide new counterexamples to the strong Eremenko conjecture, mentioned above. It was resolved in the negative
   in~\cite[Theorem~1.1]{rrrs11}, but our construction is much simpler. (See Theorem~\ref{thm:strongeremenko}.)
   
  The iterates of the function $f$ resulting from this argument converge
    to infinity uniformly on~$K$, so additional ideas are needed to prove Theorems~\ref{thm:rippon}
    and~\ref{thm:main}. A key new ingredient in the proof is to use Arakelyan's theorem
    (instead of Runge's theorem) to ensure that there is a sequence of unbounded domains
     that are mapped conformally over one another by any
    function involved in the construction (see Section~\ref{sec:scaffolding}
    and Figure~\ref{fig:strips}). The presence of these domains allows us to 
    let the image of the compact set $K$ be stretched (horizontally) at certain steps 
    during the construction, ensuring that
    the spherical diameter of $f^n(K)$ does not tend to zero as $n\to\infty$, and allowing $K$
    to contain points of both the escaping set and the bungee set. 
    
  In order to obtain a counterexample to Eremenko's conjecture, 
   we develop the method even further, now applying it to an \emph{unbounded} ray 
    $K$ 
    that connects a finite endpoint to~$\infty$. The unboundedness of $K$ allows
    us to ensure that $K$ is surrounded by connected components
    of an attracting basin, which means that any connected component
    of the escaping set intersecting $K$ is also contained in $K$. By additionally
    ensuring that the finite endpoint of $K$ escapes, while some other point
    is in $\BU(f)$, we obtain a counterexample to Eremenko's conjecture. 
    The stronger statement in Theorem~\ref{thm:eremenkocounterexample} is obtained 
    by a more careful application of similar ideas. 

Recall that the example from Theorem~\ref{thm:rippon} has an escaping wandering domain 
  and an oscillating wandering domain, and their (shared) boundary contains both escaping points and bungee
  points. It seems interesting to investigate, more generally, those points on the boundary of a wandering domain
  whose orbits have different accumulation behaviour than the interior points. 

\begin{dfn}[Maverick points]
\label{dfn:maverick-points}
Let $f$ be a transcendental entire function and suppose that $U$ is a wandering domain of $f$. We say that a point $z\in \partial U$ is \textit{maverick} if there is a sequence $(n_k)$ such that $f^{n_k}(z)\to w\in\Ch$ as $k\to\infty$, but $w$ is not a limit function of $f^{n_k}|_U$. 
\end{dfn}

Equivalently, a point $z\in \partial U$ is maverick if for any (and hence all) $w\in U$, one has\linebreak
   $\limsup_{n\to\infty} \dist^{\#}(f^n(z),f^n(w))>0$, where $\dist^{\#}$ denotes spherical distance. (Compare Lemma~\ref{lem:maverick}.) 
   If $U$ is an escaping wandering domain, then $z\in \partial U$ is maverick if and only if $z\notin \I(f)$. However,
   if $U$ is oscillating, then $\partial U$ may contain maverick points that are also in the bungee set
   $\BU(f)$, but with a different accumulation pattern. 
    Indeed, the function $f$ that is constructed in the
     proof of Theorem~\ref{thm:main} satisfies 
     \[ \dist^{\#}(f^n(z),f^n(w))>0 \]
    for any distinct $z,w\in Z_{\BU}$ not belonging to the same interior component of $K$; see Remark~\ref{rmk:BUmaverick}. 
    In particular, if $z$ was chosen in some component $U$ of $\interior(K)$ and 
    $w\in \partial U$, then $w$ will be a maverick point of the wandering domain $U$.

  As far as we are aware, 
   Theorems~\ref{thm:rippon} and~\ref{thm:main} provide the first examples of wandering domains whose
   boundaries contain maverick points. 
   The following result shows that, as suggested by their name, most points on $\partial U$ are not maverick points. 

\begin{thm}[Maverick points have harmonic measure zero]
\label{thm:maverick}
Let $f$ be a transcendental entire function and suppose that $U$ is a wandering domain of $f$. The set of maverick points in $\partial U$ has harmonic measure zero with respect to $U$.
\end{thm}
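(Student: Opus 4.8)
The plan is to reduce the statement to a normal-families argument on the sequence of iterates, exploiting the fact that on a wandering domain the iterates $f^n|_U$ form an equicontinuous family (with respect to the spherical metric) whose limit functions are constants. Fix a basepoint $w_0 \in U$. For each $n$, let $\phi_n \colon \DD \to U$ be a Riemann map with $\phi_n(0) = w_0$; more useful will be a \emph{single} Riemann map $\phi \colon \DD \to U$, $\phi(0) = w_0$, and the boundary behaviour of the sequence $g_n \defeq f^n \circ \phi \colon \DD \to \Ch$. Since harmonic measure on $\partial U$ seen from $w_0$ is the pushforward under (the radial limits of) $\phi$ of normalised Lebesgue measure on $\partial\DD$, it suffices to show that the set of $\theta \in \partial\DD$ for which the radial limit $\phi(e^{i\theta})$ exists, lies on $\partial U$, and is maverick, has Lebesgue measure zero.

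The key step is the following: the family $\{g_n\}$ is a normal family of meromorphic functions on $\DD$ (it omits, say, three values on $\DD$ since $f^n(U) \subset \F(f)$ avoids the exceptional set), and by the definition of a wandering domain any locally uniform limit along a subsequence is a \emph{constant}. Now I would invoke a boundary-limit theorem for normal families — in the spirit of the Lehto–Virtanen theory of normal meromorphic functions, or more directly an application of the Gehring–Lohwater / Bagemihl–Seidel circle of results — which says that if $g \colon \DD \to \Ch$ is a normal meromorphic function, then at almost every $\theta \in \partial\DD$ where the radial limit $g(e^{i\theta})$ exists, $g$ has the \emph{same} limit along every curve in $\DD$ landing at $e^{i\theta}$ (an ``angular = radial'' statement), and in particular the cluster set is a single point. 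Applying this uniformly: because the $g_n$ are normal with a uniform omitted-value bound, one gets, for a.e.\ $\theta$, that whenever $\phi(e^{i\theta})$ exists, the spherical oscillation of $g_n$ near $e^{i\theta}$ is controlled by the oscillation of $g_n$ along the radius — and by equicontinuity of $f^n|_U$ at the interior point $\phi(r e^{i\theta})$, comparing $g_n(re^{i\theta}) = f^n(\phi(re^{i\theta}))$ with $f^n(w_0) = g_n(0)$ forces $\dist^{\#}(f^n(\phi(e^{i\theta})), f^n(w_0)) \to 0$ along any subsequence realising a limit function. By Lemma~\ref{lem:maverick} (the equivalent characterisation of maverick points), this says $\phi(e^{i\theta})$ is \emph{not} maverick for a.e.\ such $\theta$, which is exactly what we want.

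Concretely I would organise it as: (1) set up the Riemann map and recall that harmonic measure is the pushforward of Lebesgue measure under radial limits (and that radial limits of $\phi$ exist a.e.\ and land on $\partial U$ by Fatou's theorem on bounded — or here, boundedness-irrelevant since we only need the prime-end structure — analytic functions; if $U$ is unbounded one works on the sphere). (2) Observe $\{f^n|_U\}$ is equicontinuous in the spherical metric and that all limit functions are constant — this is the defining property of wandering domains together with Montel, and is where we use $f^n(U) \subset \F(f)$. (3) Transport to $\DD$: the $g_n = f^n\circ\phi$ are normal with a common Montel bound. (4) Apply the a.e.\ boundary result for normal functions to conclude that at a.e.\ $\theta$ the cluster set of each $g_n$ along curves landing at $e^{i\theta}$ reduces to the radial value; then use equicontinuity along radii to pin $g_n(e^{i\theta})$ to $g_n(0)$ in the spherical metric up to an error tending to $0$ along any subsequence with a limit function. (5) Conclude via Lemma~\ref{lem:maverick}.

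The main obstacle I expect is making step (4) precise and uniform over $n$: a single normal function omits-three-values is standard, but here we need a statement about the sequence $(g_n)$ that is strong enough to control $\dist^{\#}(g_n(e^{i\theta}), g_n(0))$ simultaneously for all $n$ on a co-null set of $\theta$, and the natural way to get uniformity is to package $\{g_n\}$ as a single map into a product, or better, to run the Gehring–Lohwater-type argument directly with the \emph{uniform} modulus of continuity coming from Montel's theorem applied to the whole family $\{f^n|_U\}$ on compact subsets of $U$. A clean route is: cover $\DD$ by a chain of hyperbolic balls approaching $e^{i\theta}$ and use that the hyperbolic diameter of each is bounded, so the spherical images under every $f^n\circ\phi$ have uniformly bounded spherical diameter, and then a Borel–Cantelli / summability argument over a.e.\ $\theta$ shows the diameters actually shrink to $0$ along the chain whenever the radial limit exists; this is the technical heart and must be done with care about the a.e.\ quantifier interacting with the countable family $(f^n)$.
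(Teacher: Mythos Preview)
Your proposal takes a genuinely different route from the paper, but the gap you yourself flag at step~(4) is real and is not closed by the fixes you sketch.

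The paper does not use normal-function boundary theory. Instead it invokes a lemma of Osborne--Sixsmith (their Lemma~4.1, itself built on a result of Rippon--Stallard): for a sequence of holomorphic maps $g_k \colon \overline{G}_{k-1} \to \overline{G}_k$ between disjoint simply connected domains with $g_k(\partial G_{k-1}) \subset \partial G_k$, a fixed target $\xi \in \Ch$, and constants $c>1$, $0<\delta<1$, the set of $z \in \partial G_0$ whose orbit is $c\delta$-far from $\xi$ at infinitely many times at which the orbit of an interior basepoint is $\delta$-close has harmonic measure zero. The mechanism behind that lemma is a L\"owner-type contraction: each time the basepoint is near $\xi$ while a boundary point stays far, the harmonic measure of the ``bad'' boundary set contracts by a definite factor bounded away from~$1$; infinitely many contractions force measure zero. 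The paper then applies this with $G_k$ the Fatou component containing $f^{n_k}(U)$, shows that every maverick point lies in some $H(\xi,\delta)$ with $\xi \in \mathbb{Q}(i)$ and rational $\delta$, and finishes by countable subadditivity.

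Your approach instead tries to upgrade the locally uniform convergence $g_{n_k} \to c$ on $\DD$ to a.e.\ boundary convergence directly. The Lehto--Virtanen and Gehring--Lohwater results you cite are statements about the boundary behaviour of a \emph{single} normal function (equality of angular and radial limits, and the like); they say nothing about behaviour as $n \to \infty$ across the family. Your hyperbolic-chain suggestion only yields that each $g_n$ has uniformly \emph{bounded} spherical oscillation on hyperbolic balls of fixed radius---this is just Schwarz--Pick---not that the oscillation shrinks as the balls approach $\partial\DD$ uniformly in $n$, and there is no summable sequence of measures in sight to feed into Borel--Cantelli. What is missing is precisely a quantitative contraction principle that penalises boundary points for staying far from the interior orbit infinitely often, and that is exactly what the Osborne--Sixsmith lemma supplies. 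To repair your argument you would in effect have to reprove that lemma; the normal-function machinery does not shortcut it.
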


 If $U\subseteq \I(f)$, Theorem~\ref{thm:maverick} reduces to
   \cite[Theorem~1.1]{rippon-stallard11}. 
For non-escaping wandering domains, the theorem strengthens 
\cite[Theorem~1.3]{osborne-sixsmith16}, which states that, for points $z\in \partial U$ from
  a set of full harmonic measure, the $\omega$-limit set $\omega(z, f)=\bigcap_{n=1}^\infty \overline{\{f^k(z)\colon k>n\}}$ agrees with the $\omega$-limit set 
  of points in $U$. Observe that maverick points may have the same $\omega$-limit set as points in $U$. Indeed, the function in Theorem~\ref{thm:main} can be constructed so that all points in $Z_{\BU}$ share
  the same $\omega$-limit set, but have different accumulation behaviour; see Remark~\ref{rmk:omegalimit}. 
    So the set of maverick points may indeed be larger than the set considered
   by Osborne and Sixsmith. 

 Benini et al~\cite[Theorem~9.3]{benini-fagella-evdoridou-rippon-stallard2} independently
   prove a result that implies the following weaker version of Theorem~\ref{thm:maverick}:
     if $U$ is a wandering domain of a transcendental entire function, and
     $Z\subseteq\partial U$ is a set of maverick points such that
     additionally $\dist^{\#}(f^n(z),f^n(w))\to 0$ as $n\to\infty$ for all $z,w\in Z$, then $Z$ has
     harmonic measure zero. However, their result applies in a setting (sequences of holomorphic
     maps $F_n\colon U\to U_n$ from a simply connected domain $U$ to simply connected domains $U_n$) in which the 
     stronger conclusion of Theorem~\ref{thm:maverick} becomes false in general;
     compare \cite[Example~7.6]{benini-fagella-evdoridou-rippon-stallard2}. 
     
  Generalising a question of Bishop
    \cite[p.~133]{bishop14}
    for escaping wandering domains, we may ask whether 
    Theorem~\ref{thm:maverick} can be strengthened as follows. 
(See~\cite[Section~5.2]{bishop14} for the definition and a discussion of logarithmic capacity.) 
        
       \begin{question}
          Let $U$ be a simply connected 
            wandering domain of a transcendental entire function. Does the set of maverick points in
            $\partial U$ have zero logarithmic capacity when seen from $U$? That is, let $\phi\colon\DD\to U$ be a conformal isomorphism between 
             the unit disc $\DD$ and $U$, and consider the set $\Xi\subseteq \partial\DD$
             of points at which the radial limit of $\phi$ exists and is a maverick point.
             Does $\Xi$ have zero logarithmic capacity? 
       \end{question}

 Our proof of Theorem~\ref{thm:main} can be adapted to show that logarithmic capacity
   zero is the best one may hope for in the above question.
   
  \begin{thm}[Maverick points of logarithmic capacity zero]\label{thm:capacity}
   Let $\Xi\subseteq\partial\DD$ be a compact set of zero logarithmic capacity. Then
     there exists a  transcendental entire function $f$ such that $\DD$ is a wandering domain of $f$, and
       every point in $\Xi$ is maverick. The wandering domain in this example
        can be chosen to be either escaping or oscillating. 
  \end{thm}       
 
   Sets of zero harmonic measure (or zero logarithmic capacity) need not be small in an absolute geometric sense. Indeed, recall that the set of maverick points in our
    proof of Theorem~\ref{thm:rippon} has full harmonic measure when seen from
    another complementary domain of the Lakes of Wada continuum, and hence
    has Hausdorff dimension at least $1$ by a result of Makarov;
     see~\cite[Section~VIII.2]{garnett-marshall05}. By a further 
    application of our construction, we can strengthen the example as follows.
    
    \begin{thm}[Maverick points of positive Lebesgue measure]\label{thm:HD}
      There exists a transcendental entire function $f$ with a         wandering domain $U$ such that the set of non-maverick points on $\partial U$
        has Hausdorff dimension $1$, while the set of maverick points contains
        a continuum of positive Lebesgue measure. The wandering domain in this example
        can be chosen to be either escaping or oscillating. 
    \end{thm}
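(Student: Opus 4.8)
The plan is to run the construction behind Theorems~\ref{thm:main} and~\ref{thm:rippon}, but with a \emph{fat arc} built into the boundary of the wandering domain. Let $C\subseteq\C$ be a continuum of positive planar Lebesgue measure that is homeomorphic to an interval; the existence of such arcs is classical, going back to Osgood's construction of Jordan curves of positive area. Being an arc, $C$ has empty interior and connected complement. Attach to the two endpoints $p,q$ of $C$ a real-analytic arc $C'$ that is otherwise disjoint from $C$, so that $\Gamma\defeq C\cup C'$ is a Jordan curve, and let $U_0$ be its bounded complementary domain, a simply connected Jordan domain with $\partial U_0=\Gamma$. Then $\partial U_0$ has positive area, while $\partial U_0\setminus C=C'\cup\{p,q\}$ has Hausdorff dimension $1$.

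Apply the construction from the proof of Theorem~\ref{thm:main}, in the refined form developed for Theorem~\ref{thm:rippon} and using the scaffolding of Section~\ref{sec:scaffolding}, with $K\defeq\overline{U_0}$ as the wandering compactum and a single point of $Z_{\I}$ placed at the centre of $U_0$. This produces a transcendental entire function $f$ for which the sets $f^n(K)$ are pairwise disjoint, $\partial K\subseteq\J(f)$, and $U\defeq U_0$ is a wandering domain of $f$; since the seed point escapes, and escaping of one point forces the whole wandering domain to escape, $U$ is an \emph{escaping} wandering domain. The additional ingredient --- the mechanism already exploited in the proof of Theorem~\ref{thm:rippon} to keep part of $\partial U$ outside $\I(f)$ --- is that at infinitely many stages $n_j$ of the iterative Arakelyan approximation we require the approximant to be uniformly bounded on the closed set $C$, at the cost of making it very steep across a collar of $C$ inside $U_0$ whose width shrinks to $0$ as $j\to\infty$. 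This requirement is continuous and compatible with the remaining prescriptions; for each fixed $z\in U_0$ the collar has receded past $z$ for all large $j$, so $z$ still escapes and $U\subseteq\I(f)$, whereas $\liminf_{n\to\infty}|f^n(c)|<\infty$ for every $c\in C$, so that $C\cap\I(f)=\emptyset$.

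We now read off the two conclusions. As $U$ is an escaping wandering domain, a point of $\partial U$ is maverick if and only if it lies outside $\I(f)$ (see the discussion following Definition~\ref{dfn:maverick-points}). Hence every point of $C$ is maverick, and the set of maverick points on $\partial U$ contains the continuum $C$ of positive Lebesgue measure. For the non-maverick points: by Theorem~\ref{thm:maverick} they form a set of full harmonic measure with respect to $U$, and, $U$ being simply connected, Makarov's theorem~\cite[Section~VIII.2]{garnett-marshall05} shows that harmonic measure assigns zero mass to every set of Hausdorff dimension less than $1$, so the non-maverick set has Hausdorff dimension at least $1$. On the other hand, since every point of $C$ is maverick, the non-maverick set is contained in $\partial U\setminus C=C'\cup\{p,q\}$, which has Hausdorff dimension $1$. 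Therefore the set of non-maverick points on $\partial U$ has Hausdorff dimension exactly $1$. An oscillating example is obtained symmetrically, placing a point of $Z_{\BU}$ rather than $Z_{\I}$ at the centre of $U_0$ and instead prescribing the approximant to be large on $C$ and on a shrinking collar at the stages where it is bounded on a compact core of $U_0$, so that $C\subseteq\I(f)$ while $\liminf_{n\to\infty}|f^n(w)|<\infty$ for $w\in U$, whence $\limsup_{n\to\infty}\dist^{\#}(f^n(c),f^n(w))>0$ and, by Lemma~\ref{lem:maverick}, every point of $C$ is again maverick for the oscillating domain $U$.

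The main obstacle is the middle step: one must verify that the Arakelyan-approximation scheme underlying Theorem~\ref{thm:main} can simultaneously realise $U_0$ as an escaping (or oscillating) wandering domain and keep the entire uncountable, positive-area arc $C\subseteq\partial U_0$ outside $\I(f)$ (respectively inside it), while preserving the disjointness of the images $f^n(K)$ and the wandering property of $U$. The key point is that one never controls $C$ directly: one only imposes, at countably many stages, a bounded (respectively large) value of the approximant on $C$ together with a steepening collar inside $U_0$, and the required behaviour of $C$ then follows from continuity, exactly as in the proof of Theorem~\ref{thm:rippon}. Weaving this localised steepening into the scaffolding without disturbing the other requirements is where the care lies; the measure-theoretic facts that $C$ has positive area and $C'$ has dimension $1$, and the dynamical deductions above, are routine given the results already established.
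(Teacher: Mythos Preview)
There is a genuine gap, and it lies at the very first step rather than in the ``middle step'' you flag. Your wandering compactum is $K=\overline{U_0}$ with $U_0$ a \emph{Jordan} domain and $C$ an arc of its boundary Jordan curve. By Carath\'eodory's theorem the Riemann map $\DD\to U_0$ extends to a homeomorphism $\overline{\DD}\to\overline{U_0}$, so $C$ pulls back to an arc of $\partial\DD$ of positive length and hence has \emph{positive harmonic measure} seen from $U_0$. But Theorem~\ref{thm:maverick} says the set of maverick points has harmonic measure zero; therefore $C$ cannot consist entirely of maverick points for any entire function having $U_0$ as a wandering domain. No amount of ``steepening collars'' can circumvent this: your scheme asks the iterates to stay bounded on $C$ while escaping on points of $U_0$ arbitrarily close to $C$, which is incompatible with the approximate univalence of $f^{N_j}$ on the sets $K_m$ that the construction enforces (and, more bluntly, would contradict continuity of $f^n$).

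The paper sidesteps this obstruction by engineering the domain so that the fat arc is invisible to harmonic measure. One takes a Jordan arc $Z$ of positive area and an injective analytic curve $\gamma$ accumulating everywhere on $Z$ from both sides, and lets $U$ be the bounded complementary component of $Z\cup\gamma$. Then $Z$ is the impression of a \emph{single prime end} of $U$, so it has harmonic measure zero, and under the conformal maps $\pi_m\colon\interior(K_m)\to\DD$ the whole set $Z$ shrinks to a single point $\xi\in\partial\DD$. One can now run the construction of Proposition~\ref{prop:mainmaverick} with $\Xi=\{\xi\}$ at every stage: Proposition~\ref{prop:spikes} sends this one prime end to large real part while the centre stays bounded (or vice versa), yielding $Z\subset\I(f)$ with $U$ oscillating, or $Z\subset\BU(f)$ with $U$ escaping. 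The non-maverick points then lie in the analytic curve $\gamma$, giving Hausdorff dimension~$1$.
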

     
\subsection*{Further questions} 
     We conclude the introduction with several questions arising from our work.   
       While Eremenko's conjecture is false in general, 
       it is shown in~\cite[Theorem~1.6]{rrrs11} that it holds for all functions of finite
       order in the \emph{Eremenko--Lyubich class}~$\B$. (See~\cite{rrrs11} for definitions.)
       We may ask whether one of these two conditions can be omitted. 
     
     \begin{question}
       Can $I(f)$ have a bounded connected component if $f\in \mathcal B$ has infinite
          order,
          or if $f\notin\B$ has finite order? 
     \end{question}

The function $f$ constructed in the proof of Theorem~\ref{thm:eremenkocounterexample} is of infinite order and does not belong to the class $\mathcal B$ (see Remark~\ref{rem:order-classB}).
     
The following question is a modification of Question~\ref{qu:boc-thaler} that takes into account the new types of wandering domains provided by Theorem~\ref{thm:main}. Recall that if $A\subseteq\C$
   is compact, then $\Fill(A)$ denotes the \emph{fill} of $A$, that is, the complement of the
   unbounded connected component of $\C\setminus A$.  

\begin{question}\label{qu:bocthalernew}
Suppose that $U$ is a bounded simply connected Fatou component of a transcendental entire function, and let $K=\Fill(\overline{U})$. Is it true that $\partial U = \partial K$?
\end{question}  

In view of Theorem~\ref{thm:main}, a positive answer to Question~\ref{qu:bocthalernew} would imply that a bounded
simply connected domain $U$ can arise as a Fatou component of an entire function if 
and only if
$\partial U = \partial \Fill(\overline{U})$.

Our results imply the existence of both escaping and oscillating simply connected
   wandering
   domains with Lakes of Wada boundaries. In contrast, it appears 
   much more difficult to construct 
   \emph{invariant} Fatou components with Lakes of Wada boundaries, if this is at all possible. 
   
\begin{question}
\label{qu:invariant}
Let $f$ be a transcendental entire function and suppose that $U$ is an invariant Fatou component of $f$. Must every connected component of
  $\C\setminus \overline{U}$ intersect~$\J(f)$? 
\end{question}

If a Fatou component $U$ is completely invariant (i.e.\ $f^{-1}(U)=U$), then 
$\partial U = J(f)$. Hence 
a positive answer to Question~\ref{qu:invariant} would imply
that a transcendental entire function has at most one such Fatou component;
 compare~\cite{rempegillen-sixsmith19}.
The previously-mentioned partial results in the polynomial case motivate the following strengthening of Question~\ref{qu:invariant} for \emph{bounded} $U$.
\begin{question}
Let $f$ be an entire function and suppose that $U$ is a bounded invariant Fatou component of $f$. Must $\partial U$ be a simple closed curve?
\end{question}

\subsection*{Meromorphic functions} For 
  transcendental \emph{meromorphic} functions $f\colon \C\to\Ch$, 
   the analogue of Eremenko's conjecture is false for elementary reasons.
   Indeed, for the function $f(z)=\tan(z)/2$, we have $I(f)\subseteq J(f)$ and $J(f)$ is totally 
   disconnected; see e.g.~\cite[Theorem~5.2]{keenkotus97}. However, the study of wandering domains
   of transcendental meromorphic functions
   is of significant 
   interest. All entire functions constructed in this paper can easily be modified
   to be meromorphic with infinitely many poles. 
   Even more is true: any (not necessarily full) compact set $K$ can be 
   realised as a wandering compactum of a transcendental meromorphic function
   $f$, in such a way that every connected component of $\partial K$ is 
   a connected component of $J(f)$; see~\cite[Theorem~1.3]{mrw2} and recall Remark~\ref{rmk:Jcomponents}~\ref{item:Jcomponents}. 
   For a meromorphic function having a wandering domain whose orbit consists only of simply connected Fatou components, 
   Theorem~\ref{thm:maverick} goes through with the same
   proof. It is plausible
   that Theorem~\ref{thm:maverick} is true also for general 
   wandering domains of transcendental meromorphic functions, but this
   requires further investigation.
   
\subsection*{Notation} 
For a set $X\subseteq \C$, let $\partial X$, $\operatorname{int}(X)$, and $\overline{X}$ denote, respectively, the boundary, the interior, and the closure of $X$ in~$\C$. We write $\dist$, $\dist^\#$ and $\dist_U$ 
for the Euclidean, spherical and hyperbolic distance in a domain $U\subseteq\C$, respectively. We use $\ell_U(\gamma)$ to denote the hyperbolic length of a curve $\gamma$ in a domain $U\subseteq \C$. Moreover, $\diam$ and $\diam^\#$ denote the Euclidean and spherical diameter, respectively. The Euclidean disc of radius $\delta>0$ around $z\in\C$ is denoted by $\D(z,\delta)$, and
the unit disc is denoted by $\DD \defeq \D(0,1)$. Finally, we use $\mathbb{H}\defeq\{z\in\C\colon \im z>0\}$.

\subsection*{Structure of the paper} In Section~\ref{sec:preliminaries}, we recall the results from approximation theory needed for our constructions, and prove a 
number of useful lemmas about behaviour that is preserved under approximation. In Section~\ref{sec:uniform}, 
  we give a simplified proof of Theorem~\ref{thm:main} in the special case that the wandering compact set $K$ escapes uniformly, that is, $Z_{BU}=\emptyset$ (see Theorem~\ref{thm:uniform}). We use this to obtain wandering domains that form Lakes of Wada (Theorem~\ref{thm:fatou}) and new counterexamples to the strong
  Eremenko conjecture (Theorem~\ref{thm:strongeremenko}). The results of Section~\ref{sec:uniform} are not strictly required for the proofs of the main results stated in the introduction 
  (Theorem~\ref{thm:fatou} also follows from Theorem~\ref{thm:main}), but the proof of Theorem~\ref{thm:uniform} already contains a number of ideas that are also
  present in the more involved constructions relating to Theorems~\ref{thm:main} and~\ref{thm:eremenkocounterexample}.

In Section~\ref{sec:scaffolding}, we set up the basic structure of the examples constructed in both Theorem~\ref{thm:main} and Theorem~\ref{thm:eremenkocounterexample}. 
  The proof of Theorem~\ref{thm:main} in full generality, and the deduction of Theorem~\ref{thm:rippon} from it, is in Section~\ref{sec:maverickconstruction}. 
  Section~\ref{sec:mavericklarge} briefly outlines how the proof of Theorem~\ref{thm:main} can be modified to prove Theorems~\ref{thm:capacity} and~\ref{thm:HD}. 
  The techniques and results of Section~\ref{sec:mavericklarge} are not used elsewhere in the paper.

   Our counterexamples to Eremenko's conjecture are constructed in Section~\ref{sec:eremenko}. This construction is similar to 
   that in 
   Section~\ref{sec:maverickconstruction}, but can be read independently of it. Thus, a reader primarily interested in the
   proof of Theorem~\ref{thm:eremenkocounterexample} should study Sections~\ref{sec:preliminaries},~\ref{sec:scaffolding} and~\ref{sec:eremenko}, but could choose to
   omit Sections~\ref{sec:uniform},~\ref{sec:maverickconstruction} and~\ref{sec:mavericklarge}.
   
   Theorem~\ref{thm:maverick} is proved in Section~\ref{sec:maverick}. We conclude the paper with Section~\ref{sec:spikes}, which discusses and proves an auxiliary result
     used in Sections~\ref{sec:maverickconstruction} and~\ref{sec:mavericklarge}.

\subsection*{Acknowledgements} We thank Walter~Bergweiler, Luka~Boc Thaler, 
Jack~Burkart, Alex~Eremenko, Vasiliki~Evdoridou, Misha~Lyubich, Phil~Rippon, Mitsuhiro~Shishikura, Dave~Sixsmith and Gwyneth~Stallard for interesting discussions on this topic. We are particularly grateful to Alex~Eremenko for mentioning to us
the question of Fatou in~\cite{fatou20}. Finally, we thank the referee for their careful reading of the manuscript and many thoughtful comments that improved the quality of the exposition.

\subsection*{Note from the authors} 
This work follows in the footsteps of two world-leading Ukrainian mathematicians, Alex Eremenko and Misha Lyubich. Their collaboration, which pioneered the use of approximation theory in complex dynamics, took place in the fall of 1983 in Kharkiv. At that time, Alex Eremenko was based at the Institute of Low Temperature Physics and Engineering, and it was there that he formulated what is now known as Eremenko's conjecture. The city of Kharkiv has been devastated during the ongoing invasion of
Ukraine, and the Institute  has been severely damaged \cite{kharkiv}. We dedicate this paper to Profs.\ Eremenko and Lyubich,
  to the people of Kharkiv, and to all victims of the invasion of Ukraine.

\section{Preliminary results on approximation}

\label{sec:preliminaries}

We recall two classical results of approximation theory. The classical theorem 
 of Runge~\cite{runge85} (see also \cite[Theorem~2 in Chapter II~\S 3]{gaier87}) 
 concerns the approximation of functions on compact and full sets by polynomials
 \footnote{%
  Runge's theorem in fact applies more generally to approximation of
    holomorphic functions on arbitrary compact sets by
    \emph{rational} maps~\cite[Theorem~2 in Chapter III~\S 1]{gaier87}. The key case for us is when the compact set
    is full, in which case the approximating function can be chosen to be a polynomial.}.
   This is sufficient to prove Theorem~\ref{thm:main} when $Z_{\BU}=\emptyset$,
   and hence Theorem~\ref{thm:fatou}. 
   In contrast, 
  Arakelyan's theorem \cite{arakelyan64} (see also \cite[Theorem~3 in Chapter~IV~\S 2]{gaier87}) allows us to approximate
  functions defined on (potentially) unbounded sets by entire functions; this will
  be crucial in our remaining constructions and, in particular, in the proof of 
  Theorem~\ref{thm:eremenkocounterexample}.  

\begin{thm}[Runge's theorem on polynomial approximation]
 \label{thm:runge}
  Let $A\subseteq \C$ be a compact set such that $\C\setminus A$ is connected. 
 Suppose that $g\colon A\to \C$ is a continuous function that extends holomorphically to an open neighbourhood of $A$. 
  Then for every $\eps>0$, there exists a polynomial~$f$ such that 
\[
\lvert f(z) - g(z)\rvert < \eps\quad \text{for all } z\in A.
\] 
\end{thm}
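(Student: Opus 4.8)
The plan is to follow the classical route: represent $g$ on $A$ by a Cauchy integral over a cycle lying in the complement of $A$, approximate that integral by Riemann sums to reduce to rational functions with poles off $A$, and then ``push'' each pole out to infinity using the connectedness of $\C\setminus A$, which turns the rational approximant into a polynomial.

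First I would fix an open set $U\supseteq A$ to which $g$ extends holomorphically, with $\overline U$ compact. Taking a sufficiently fine square grid, let $Q$ be the union of those closed grid squares meeting $A$, chosen fine enough that $A\subset\interior(Q)$ and $Q\subset U$. After cancelling the edges shared by two selected squares, the boundary $\Gamma=\partial Q$ is a cycle contained in $U\setminus A$ with winding number $1$ about every point of $A$. The Cauchy integral formula then gives
\[
  g(z)=\frac{1}{2\pi i}\int_\Gamma\frac{g(\zeta)}{\zeta-z}\,\deriv\zeta\qquad\text{for all }z\in A.
\]
Since $\dist(A,\Gamma)>0$ and $g$ is uniformly continuous on the compact set $\Gamma$, a fine enough partition of $\Gamma$ yields a rational function $R(z)=\sum_{j}\frac{c_j}{\zeta_j-z}$ with all $\zeta_j\in\Gamma\subset\C\setminus A$ and $|R(z)-g(z)|<\eps/2$ for every $z\in A$. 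So it suffices to approximate $R$ uniformly on $A$ by a polynomial.

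The main step — and the one I expect to be the real content — is the pole-pushing lemma: if $a\in\C\setminus A$ and $b\in\C$ with $|a-b|<\dist(a,A)$, then for each $k\ge 1$ the function $1/(a-z)^k$ can be approximated uniformly on $A$, to any prescribed tolerance, by polynomials in $1/(b-z)$; this follows by expanding $\frac{1}{a-z}=\frac{1}{b-z}\cdot\bigl(1-\tfrac{a-b}{b-z}\bigr)^{-1}$ as a geometric series that converges uniformly on $A$, differentiating/multiplying as needed for higher $k$. Likewise, if $|a|$ is large enough compared to $\sup_{z\in A}|z|$, then $1/(a-z)^k$ is uniformly approximable on $A$ by \emph{polynomials} in $z$. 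Because $(\C\setminus A)\cup\{\infty\}$ is open and connected in $\widehat\C$, hence path-connected, I can join each pole $\zeta_j$ to $\infty$ by a path in $\C\setminus A$ (followed by a ray to $\infty$) and cover it by finitely many discs along which the pole moves a distance less than the local value of $\dist(\cdot,A)$; iterating the lemma along this chain pushes the pole to $\infty$, producing a polynomial approximant to $1/(\zeta_j-z)$ on $A$.

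Finally I would assemble the pieces: apply the pole-pushing procedure to each of the finitely many terms $c_j/(\zeta_j-z)$ of $R$, arranging in advance that the cumulative error over all chained approximation steps is at most $\eps/2$ on $A$, and sum to obtain a polynomial $f$ with $|f(z)-R(z)|<\eps/2$ and hence $|f(z)-g(z)|<\eps$ for all $z\in A$. The only delicate bookkeeping is propagating the error tolerances backwards through the finitely many stages (Riemann sum, then the finite chain of pole moves for each pole), which is routine once the two approximation lemmas above are in hand; the conceptual crux is the use of connectedness of $\C\setminus A$ to guarantee that every pole can be dragged to infinity without ever crossing $A$.
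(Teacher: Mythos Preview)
Your sketch is the standard classical proof of Runge's theorem and is correct in outline; the Cauchy integral representation via a grid cycle, the Riemann-sum reduction to rational functions with poles off $A$, and the pole-pushing argument exploiting the connectedness of $(\C\setminus A)\cup\{\infty\}$ are exactly the usual ingredients, and your error bookkeeping is adequate.

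However, the paper does not actually prove this theorem: it is stated as a classical result with citations to Runge's original paper and to Gaier's book, and is used as a black box in the subsequent constructions. So there is nothing in the paper to compare your argument against. Your write-up would serve perfectly well as a self-contained proof if one were required, but for the purposes of this paper the statement is simply quoted from the literature.
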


\begin{thm}[Arakelyan's theorem]
\label{thm:arakelyan}
Let $A\subseteq \C$ be a closed set such that
\begin{enumerate}[(i)]
\item $\CR\setminus A$ is connected;
\item $\CR\setminus A$ is locally connected at $\infty$.
\end{enumerate}
Suppose that $g\colon A\to\C$ is a continuous function that is holomorphic on $\interior(A)$. 
Then for every $\varepsilon>0$, there exists an entire function $f$ such that
$$
|f(z)-g(z)|<\varepsilon\quad \mbox{ for all } z\in A.
$$
\end{thm}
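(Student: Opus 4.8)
The plan is to derive Arakelyan's theorem from its compact analogue, \emph{Mergelyan's theorem} --- which asserts that a function continuous on a compact set $K\subseteq\C$ with connected complement and holomorphic on $\interior(K)$ is a uniform limit on $K$ of polynomials --- by exhausting $\C$ and summing a telescoping series. I take Mergelyan's theorem as known; it strengthens Runge's Theorem~\ref{thm:runge} in that $g$ need only be continuous up to $\partial K$. One may assume $A\neq\C$, as otherwise $g$ is already entire. A first observation is that hypothesis~(i) forces every connected component of $\C\setminus A$ to be unbounded: a bounded one would be open and closed in $\Ch\setminus A$, contradicting connectedness.

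Using this, together with hypothesis~(ii), I would choose an exhaustion of $\C$ by discs $D_n\defeq\D(0,r_n)$ with $r_n\uparrow\infty$ such that both the compact sets $A_n\defeq A\cap\overline{D_n}$ and the compound sets $L_n\defeq A_{n+1}\cup\overline{D_{n-1}}$ have connected complement in $\C$; note that $A=\bigcup_n A_n$ and $\C=\bigcup_n D_n$. That $\C\setminus A_n=(\C\setminus A)\cup\{|z|>r_n\}$ is connected follows since each component of $\C\setminus A$ is unbounded and hence meets the connected set $\{|z|>r_n\}$. For the $L_n$ one needs, in addition, that the ``end'' of $\C\setminus A$ at infinity is tame, which is precisely what local connectivity of $\Ch\setminus A$ at $\infty$ provides (and what fails, for instance, when $\C\setminus A$ has two channels to infinity that are joined only near the origin, a configuration for which the theorem is false): the radii $r_n$ can then be picked so that no component of $(\C\setminus A)\cap\{|z|>r_n\}$ is ``capped off'' by the circle $\{|z|=r_n\}$.

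Next I would build the entire function as a telescoping limit $f=\lim_n f_n$ of polynomials, arranging inductively that $f_{n+1}-f_n$ is small on $\overline{D_{n-1}}$ --- which makes the series $\sum_n(f_{n+1}-f_n)$ converge locally uniformly on $\C=\bigcup_n D_n$, so that $f$ is entire --- while $f_{n+1}$ still approximates $g$ to within $\eps$ on $A_{n+1}$. Starting from a Mergelyan approximation $f_1$ of $g$ on $A_1$, and given $f_n$, I would apply Mergelyan's theorem on $L_n$ to a continuous function that is close to $g-f_n$ on $A_{n+1}$ and close to $0$ on $\overline{D_{n-1}}$; these demands are nearly compatible on the overlap $A_{n-1}$, where $g-f_n$ is already small, and the cleanest way to merge the two resulting local polynomial approximations into a single polynomial $h_n$ is via Roth's fusion lemma. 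Setting $f_{n+1}\defeq f_n+h_n$ and keeping the Mergelyan errors summable then gives $|f-g|\leq\eps$ on $A=\bigcup_n A_n$.

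The main obstacle, as usual in this circle of ideas, is the point-set topology: one must verify that hypotheses~(i)--(ii) really do let one choose the exhaustion so that every compact set handed to Mergelyan's theorem --- the $A_n$, the $L_n$, and the auxiliary sets used inside the fusion lemma --- has connected complement, so that the compact-case theorem applies at every stage without undoing the approximation already achieved on $\overline{D_{n-1}}$. This is exactly where local connectivity at $\infty$ earns its keep; once the exhaustion is in place, the estimates in the telescoping sum are routine.
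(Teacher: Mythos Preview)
The paper does not prove Arakelyan's theorem; it is stated as a classical result with references to Arakelyan's original paper and to Gaier's monograph, and is then used as a black box in the constructions of Sections~\ref{sec:scaffolding}--\ref{sec:eremenko}. So there is no ``paper's own proof'' to compare against.

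That said, your outline is essentially the standard modern route to Arakelyan's theorem via Mergelyan's theorem together with Roth's fusion lemma (this is, up to presentation, the argument one finds in Gaier's book). The broad strokes are correct: exhaust $\C$ by discs, apply Mergelyan on the compact pieces, and telescope. The point you rightly single out as the ``main obstacle'' --- arranging the exhaustion so that every compact set fed to Mergelyan has connected complement, and in particular that the sets $L_n=A_{n+1}\cup\overline{D_{n-1}}$ do --- is indeed where hypotheses~(i) and~(ii) are both genuinely needed, and your informal description of why local connectivity at $\infty$ is the relevant condition is accurate. In a complete write-up this topological step would need to be made precise (the usual formulation is that (ii) guarantees, for each $r$, a larger radius $R$ such that every point of $(\C\setminus A)\cap\{|z|>R\}$ can be joined to $\infty$ in $(\C\setminus A)\cap\{|z|>r\}$), and the bookkeeping in the fusion step requires some care with the error terms, but the strategy is sound.
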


Here $\CR\setminus A$ is \emph{locally connected at $\infty$} if $\infty$ has 
 a neighbourhood base consisting of connected sets. Since $A$ is closed, this 
 is equivalent to the following: For every $R>0$ there is an $R'\geq R$ such that
 any point $z\in \CR\setminus A$ with $\lvert z\rvert \geq R'$
 can be connected to infinity by a curve in $\C \setminus A$ consisting of points of
 modulus greater than $R$ (see~\cite[Lemma on p.~138]{gaier87}). In particular, the conditions of Arakelyan's theorem
 are satisfied whenever $A= \bigcup_{k=0}^{\infty} A_k$, where the $A_k\subseteq \C$ are
  pairwise disjoint non-empty closed subsets such that 
 \begin{itemize}
  \item if $A_k$ is unbounded, then $A_k\cup\{\infty\}$ is locally connected;
  \item all connected components of $\C\setminus A_k$ are unbounded;
  \item the $A_k$ tend to infinity in the Hausdorff metric as $k\to\infty$; that is,
    if $(z_k)_{k=0}^{\infty}$ is a sequence with $z_k\in A_k$ for all $k$,
    then $z_k\to\infty$ as $k\to \infty$. 
 \end{itemize}
  In this article, we will only apply Arakelyan's theorem to sets of this form;
   in fact each $A_k$ will be either a full compact set,  
    a topological half-strip (bounded by a single arc tending to infinity in both directions) or a topological strip (bounded by two arcs tending to
    infinity in both directions).

In each of our constructions, we use two simple facts concerning
  approximation. The first is essentially a uniform version of Hurwitz's theorem, while the second
  is an elementary exercise. For the reader's convenience, we include the
  proofs.
 \begin{lem}[Approximation of univalent functions]\label{lem:approx1}
   Let $U,V\subseteq\C$ be open, and let $\phi\colon U\to V$ be a conformal isomorphism.
   
Let $A\subseteq U$ be a closed set such that $\dist(A,\partial U)>0$ and 
  $\eta \defeq \inf_{z\in A}\lvert \phi'(z)\rvert>0$.\linebreak Then there is $\eps>0$ with the following property: if $f\colon U\to\C$ is holomorphic with $\lvert f(z) - \phi(z)\rvert \leq\eps$ for all $z\in U$, then 
      $f$ is injective on $A$, with $f(A)\subseteq V$. 
      
   Moreover, $\lvert f'(z) - \phi'(z)\rvert < \eta/2$ for $z\in A$. In
     particular, $\lvert f'(z)\rvert > \eta/2$, and if $\lvert \phi'(z)\rvert$ is bounded from above on $A$, then so is
     $\lvert f'(z)\rvert$. 
 \end{lem}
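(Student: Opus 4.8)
The plan is to prove the two conclusions of Lemma~\ref{lem:approx1} — injectivity with $f(A)\subseteq V$, and the derivative bounds — by combining a Cauchy estimate for $f'-\phi'$ with a quantitative version of Hurwitz's theorem, exploiting that $A$ sits a definite distance inside $U$.

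\begin{proof}[Proof sketch]
Set $\rho \defeq \dist(A,\partial U)>0$ and $A' \defeq \{z\in\C\colon \dist(z,A)\leq \rho/2\}\subseteq U$, which is compact. First I would handle the derivative statements, since they are the easiest and only require a Cauchy estimate. If $\lvert f - \phi\rvert\leq \eps$ on $U$, then for $z\in A$ the Cauchy integral formula over the circle $\partial \D(z,\rho/2)$ gives $\lvert f'(z)-\phi'(z)\rvert \leq 2\eps/\rho$. Choosing $\eps \leq \rho\eta/4$ yields $\lvert f'(z)\rvert \geq \eta - \eta/2 = \eta/2 > \eta/2$ (adjusting constants to get the strict inequality), and if $\lvert\phi'\rvert\leq M$ on $A$ then $\lvert f'\rvert \leq M + \eta/2$ there. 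So the derivative claims follow for any sufficiently small $\eps$.

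For injectivity and $f(A)\subseteq V$, I would argue as follows. Since $A'$ is a compact subset of $U$ and $\phi$ is injective on the open set $U$, there is a constant $c>0$ such that $\lvert \phi(z)-\phi(w)\rvert \geq c\,\lvert z-w\rvert$ for all $z,w$ in the connected components of $A'$ that meet $A$ — this is the quantitative Hurwitz/univalence input. The cleanest way to obtain it: cover $A$ by finitely many discs $\D(z_j, r_j)$ with $\overline{\D(z_j,2r_j)}\subseteq U$ on which $\phi$ is univalent with a definite lower bound on $\lvert\phi(z)-\phi(w)\rvert/\lvert z-w\rvert$ (by compactness of $A$ and continuity of $\phi'$, which is nonvanishing), then patch: points in a common disc are separated linearly, and two points $z,w\in A$ lying in different discs have $\lvert z-w\rvert$ bounded below, while $\phi(z)\neq\phi(w)$ since $\phi$ is globally injective, so $\lvert\phi(z)-\phi(w)\rvert$ is also bounded below by compactness of $A\times A$ off the diagonal. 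Either way one gets: there is $c_0>0$ with $\lvert\phi(z)-\phi(w)\rvert\geq c_0$ whenever $z,w\in A$ and $\lvert z-w\rvert\geq \rho/4$, and $\lvert f'\rvert>\eta/2$ on $A'$ (enlarging the Cauchy-estimate region to $A'$) forces $f$ to be injective on each small disc provided $\eps$ is small. Now take $z,w\in A$, $z\neq w$. If $z,w$ lie in a common disc of the cover, injectivity of $f$ there (from the derivative bound plus, e.g., the standard estimate that a holomorphic map with $\lvert f'-\psi'\rvert$ small relative to the univalence modulus of $\psi$ stays injective) gives $f(z)\neq f(w)$. If $\lvert z-w\rvert\geq \rho/4$, then $\lvert f(z)-f(w)\rvert \geq \lvert\phi(z)-\phi(w)\rvert - 2\eps \geq c_0 - 2\eps > 0$ once $\eps < c_0/2$. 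Hence $f$ is injective on $A$.

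Finally, for $f(A)\subseteq V$: I would show that $\dist(\phi(A),\partial V)>0$. Since $\phi\colon U\to V$ is a conformal isomorphism and $A$ is a closed subset of $U$ with $\dist(A,\partial U)>0$, the image $\phi(A)$ is closed in $V$ and... this requires a little care because $\phi$ need not be proper without extra hypotheses. The safe route is to note that on the compact set $\overline{\D(z_j,2r_j)}$ the map $\phi$ is a homeomorphism onto its (compact) image, so $\phi(\overline{\D(z_j,r_j)})$ is a compact subset of $V$, hence $d_j \defeq \dist(\phi(\overline{\D(z_j,r_j)}),\partial V)>0$; taking $d \defeq \min_j d_j>0$ over the finite cover, we get $\dist(\phi(A),\partial V)\geq d$. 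Then $\lvert f(z)-\phi(z)\rvert\leq\eps < d$ on $A$ forces $f(z)\in V$. Choosing $\eps$ smaller than all of $\rho\eta/4$, $c_0/2$, $d$, and the threshold making $f$ injective on each cover disc completes the proof.

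The main obstacle I anticipate is making the ``quantitative Hurwitz'' step fully rigorous with clean constants — i.e., extracting a single $\eps$ that simultaneously forces local injectivity on each disc of the cover and global separation of far-apart points — and dealing correctly with the fact that $\phi$ is only assumed to be a conformal isomorphism $U\to V$ (so properness and the behaviour of $\phi(A)$ near $\partial V$ must be controlled via the compact cover rather than assumed). Everything else is routine Cauchy-estimate bookkeeping.
\end{proof}
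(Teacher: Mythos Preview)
Your argument has a genuine gap: you repeatedly assume $A$ is compact, but the lemma only assumes $A$ is \emph{closed} with $\dist(A,\partial U)>0$. You assert that $A' = \{z:\dist(z,A)\leq \rho/2\}$ ``is compact'', take a finite cover of $A$ by discs, invoke ``compactness of $A\times A$ off the diagonal'' to get $c_0>0$, and take $d=\min_j d_j$ over a finite cover to show $\dist(\phi(A),\partial V)>0$. None of this is available when $A$ is unbounded, and the paper does need the unbounded case (see the remark after the lemma and its application in Lemma~\ref{lem:strips}, where $A$ is an infinite union of closed strips).

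The paper replaces all of your compactness arguments with a single application of Koebe's $1/4$-theorem: for any $z_0\in A$ and $\delta<\dist(A,\partial U)/2$, univalence of $\phi$ on $\D(z_0,\delta)$ gives $\phi(\D(z_0,\delta))\supset \D(\phi(z_0),\rho)$ with $\rho=\delta\eta/4$. This one uniform estimate, depending only on $\eta$ and $\dist(A,\partial U)$, yields both $\dist(\phi(A),\partial V)\geq\rho$ and $\lvert\phi(z)-\phi(z_0)\rvert\geq\rho$ whenever $\lvert z-z_0\rvert\geq\delta$; taking $\eps<\rho/2$ then handles far-apart points and $f(A)\subset V$ at once. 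Local injectivity on $\D(z_0,\delta)$ is done by the argument principle (a Rouch\'e-type homotopy of $f(\partial D)-f(z_0)$ to $\phi(\partial D)-\phi(z_0)$), and the derivative bounds by Cauchy exactly as you do. Your Cauchy-estimate part is fine; it is the injectivity and image-containment parts that need to be redone without compactness.
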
 
\begin{rmk}
  When $A\subseteq U$ is compact, the hypotheses on $A$ are automatically satisfied.
\end{rmk}
 \begin{proof}
   Let $\delta <\dist(A,\partial U)/2$. By Koebe's $1/4$-theorem, 
      \[ \phi(\D(z_0,\delta))\supseteq \D(\phi(z_0), \rho) \]
    for all $z_0\in A$,  where $\rho \defeq \delta\cdot\eta/4$. In particular, 
      \begin{equation}\label{eqn:distphiA} \dist(\phi(A),\partial V)\geq \rho >0 \end{equation}
     and, since $\phi$ is injective,
         \begin{equation}\label{eqn:phiestimate} \lvert \phi(z)-\phi(z_0)\rvert \geq \rho \end{equation}
      for all $z_0\in A$ and $z\in U$ with $\lvert z - z_0\rvert \geq \delta$. 
    
   Now let $f$ be as in the statement of the lemma, where 
      $\eps < \rho/2$.
      Then 
      \begin{equation}\label{eqn:largescaleinjective}
           f(z)\neq f(z_0) \quad\text{when }\lvert z - z_0\rvert \geq \delta, \end{equation}
     by~\eqref{eqn:phiestimate}. Moreover, $f(A)\subseteq V$ by~\eqref{eqn:distphiA}. 
     
     We must show that also $f(z)\neq f(z_0)$ for $z\neq z_0$ when $z\in D\defeq \D(z_0,\delta)$. In other words, we claim that $f(z)-f(z_0)=0$ has a unique
     solution $z\in D$. According to the argument principle, the number of such solutions is given
       by the winding number of $f(\partial D)$ around $f(z_0)$. By choice of $\eps$ and~\eqref{eqn:phiestimate}, the curves
       $\phi(\partial D)-\phi(z_0)$ and $f(\partial D)- f(z_0)$ are homotopic in $\C\setminus \{0\}$. Thus, they have the same winding number around $0$. As $\phi$ is injective,
       that winding number is $1$, and the claim is proved. Together with~\eqref{eqn:largescaleinjective}, we see that $f$ is injective on $A$. 

      Finally, by Cauchy's theorem, we have 
     \begin{equation}\label{eqn:derivativeapprox} \lvert f'(z_0) - \phi'(z_0)\rvert = 
        \frac{1}{2\pi}\left\lvert \int_{\partial \D(z_0,\delta)} \frac{f(\zeta)-\phi(\zeta)}{(\zeta - z_0)^2}\,
         \textrm{d} \zeta\right\rvert \leq \frac{2\pi\delta}{2\pi}\frac{\eps}{\delta^2}=\frac{\eps}{\delta} < \frac{\eta}{2} \end{equation}
     for $z_0\in A$. 
        This proves the final statement of the lemma.
 \end{proof}

\begin{lem}[Approximation of iterates]\label{lem:approx2}
 Let $U\subseteq\C$ be open, and let $g\colon U\to\C$ be continuous.
   Suppose that 
    $K\subseteq U$ is closed and $n\in \N$ is such that $g^k(K)$ is defined and a subset of $U$
     for all $k < n$. 
     
    Suppose furthermore that $\widehat{K} \defeq \bigcup_{k=0}^{n-1} g^k(K)$ satisfies $\dist(\widehat{K},\partial U)>0$ and 
      that $g$ is uniformly continuous at every point of $\widehat{K}$, with respect to Euclidean distance.
    
    Then for every $\eps>0$, there is $\delta>0$ 
     with the following property. If $f\colon U\to\C$ is continuous
     with $\lvert f(z) - g(z)\rvert < \delta$ for all $z\in U$, then 
       \begin{equation}\label{eqn:iteratesclose} \lvert f^k(z) - g^k(z)\rvert < \eps   
       \end{equation}
       for all $z\in K$ and all $k\leq n$.  
\end{lem}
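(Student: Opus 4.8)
The plan is to prove Lemma~\ref{lem:approx2} by induction on $k$, establishing~\eqref{eqn:iteratesclose} for a suitable choice of $\delta$ depending only on $\eps$, $n$, $g$ and the geometry of $\widehat{K}$. The point is to unravel the composition $f^k = f\circ f^{k-1}$ and compare it to $g^k = g\circ g^{k-1}$ in two pieces: the error coming from replacing the \emph{outer} $g$ by $f$ (controlled directly by $\delta$), and the error coming from feeding a perturbed input $f^{k-1}(z)$ into $g$ rather than the true input $g^{k-1}(z)$ (controlled by the uniform continuity of $g$ along $\widehat K$).

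Here is how I would set it up. Let $r \defeq \dist(\widehat K,\partial U)>0$, and let $W \defeq \{z\in U\colon \dist(z,\widehat K)<r/2\}$, an open set compactly contained in $U$ that contains $\widehat K$ together with a uniform neighbourhood of it. By hypothesis $g$ is uniformly continuous at every point of $\widehat K$; combined with the fact that $\widehat K$ is a finite union of sets each of the form $g^k(K)$, one gets a single modulus of continuity: for every $\eta>0$ there is $\sigma(\eta)>0$, which we may take $<r/2$, such that $|g(a)-g(b)|<\eta$ whenever $a\in\widehat K$, $b\in U$ and $|a-b|<\sigma(\eta)$. Now define a decreasing sequence $\eps = \eps_n > \eps_{n-1} > \dots > \eps_0 > 0$ \emph{backwards} by $\eps_0$ arbitrary (it will not matter, since $f^0=g^0=\mathrm{id}$) and, given $\eps_k$, choosing $\eps_{k-1}$ small enough that $\eps_{k-1} < \eps_k$ and also $|g(a)-g(b)| < \eps_k - \eps_{k-1}$ whenever $a\in\widehat K$ and $|a-b| \le \eps_{k-1}$ — possible by taking $\eps_{k-1} \le \sigma(\eps_k-\eps_{k-1})$, which one can arrange by a routine fixed-point-type choice (e.g. first pick a target gap, then shrink). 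Wait — cleaner: having fixed $\eps_k$, set $\tau_k \defeq \sigma(\eps_k/2)$ and then put $\eps_{k-1}\defeq \min(\eps_k/2,\tau_k)$. Finally set $\delta \defeq \eps_0$.

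With these choices the induction runs as follows. For $k=0$ there is nothing to prove. Assume $|f^{k-1}(z)-g^{k-1}(z)|<\eps_{k-1}$ for all $z\in K$; in particular $f^{k-1}(z)\in W\subset U$, so $f^k(z)=f(f^{k-1}(z))$ is defined. Write
\[
|f^k(z)-g^k(z)| \le |f(f^{k-1}(z)) - g(f^{k-1}(z))| + |g(f^{k-1}(z)) - g(g^{k-1}(z))|.
\]
The first term is $<\delta\le\eps_{k-1}\le\eps_k/2$ by the hypothesis on $f$ (valid since $f^{k-1}(z)\in U$). For the second term, note $g^{k-1}(z)\in\widehat K$ and $|f^{k-1}(z)-g^{k-1}(z)|<\eps_{k-1}\le\tau_k=\sigma(\eps_k/2)$, so by the modulus-of-continuity statement the second term is $<\eps_k/2$. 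Adding, $|f^k(z)-g^k(z)|<\eps_k\le\eps$, completing the induction and the proof.

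The only genuinely delicate point — and the one I would be careful to phrase correctly — is extracting a \emph{single} modulus of continuity $\sigma$ valid uniformly over all of $\widehat K$ from the pointwise uniform-continuity hypothesis: this is where one uses that $\widehat K=\bigcup_{k=0}^{n-1}g^k(K)$ is a finite union, so one may take the minimum of finitely many moduli, each of which exists by the stated hypothesis that $g$ is uniformly continuous at every point of $\widehat K$. Everything else is bookkeeping with the nested constants $\eps_k$, and the choice $\delta=\eps_0$ then depends only on $\eps$, $n$, $g$ and $r=\dist(\widehat K,\partial U)$, as required. I do not expect any obstacle beyond keeping the quantifiers straight.
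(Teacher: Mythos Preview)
Your proof is correct and follows essentially the same approach as the paper: both arguments split $f^k - g^k$ via the triangle inequality into an ``outer'' error controlled by $\delta$ and an ``inner'' error controlled by uniform continuity of $g$ along $\widehat K$, and both proceed by induction with a nested sequence of tolerances. The paper organises the induction on $n$ (defining $\delta_n(\eps)$ recursively via $\delta_{n+1}(\eps)=\min\{\delta_n(\eps),\delta_n(\delta'),\eps/2\}$), while you fix $n$ and build the tolerances $\eps_k$ backwards from $\eps_n=\eps$; these are equivalent bookkeeping choices. One small remark: your closing paragraph about extracting a single modulus from a finite union is unnecessary, since the hypothesis ``$g$ is uniformly continuous at every point of $\widehat K$'' already means a single modulus valid uniformly over $\widehat K$ (this is how the paper uses it too), not a pointwise statement to be patched together.
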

\begin{rmk}\label{rmk:uniformcontinuity}
 By the uniform continuity assumption in the second paragraph, we mean that 
   there exists a modulus of continuity that is valid at all points of $\widehat{K}$. That is, 
   for every $\eps>0$ there is $\delta'(\varepsilon)<\dist(\widehat{K},\partial U)$ 
   such that $\lvert {g}(\zeta) - {g}(\omega)\rvert < \eps/2$ whenever ${\omega} \in \widehat{K}$ and 
    $\zeta\in U$ with $\lvert \zeta  - \omega\rvert \leq \delta'(\varepsilon)$. This assumption is satisfied, in particular,
    if $g$ is uniformly continuous on a neighbourhood $V\subseteq U$ of $\widehat{K}$ with $\dist(\partial V, \widehat{K})>0$,
    or if $g$ is holomorphic and $\lvert g'\rvert$ is uniformly bounded above on such a neighbourhood $V$. 
    
  In particular, the assumptions on $\widehat{K}$ are automatically satisfied when $K$ is compact. 
\end{rmk}
\begin{proof}
  Fix $U$, $g$ and $K$ as in the statement of the lemma. For every $n\in\N$ that satisfies the hypotheses, we prove the existence of a function $\delta_n$ 
   such that $\delta = \delta_n(\eps)$ has the desired property. The proof
   proceeds by induction on $n$; for $n=1$ we may set 
     $\delta_1(\eps)=\eps$. Suppose that the induction hypothesis holds
    for $n$, and let $\eps>0$. Define
\[
    \delta_{n+1}(\varepsilon)=\min\bigl\{\delta_n(\eps),\delta_n(\delta'(\eps)), \eps/2\bigr\}
    \] 
    where $\delta'(\eps)$ is as in Remark~\ref{rmk:uniformcontinuity}.
    Let $z\in K$. Then~\eqref{eqn:iteratesclose} holds for $k\leq n$ by
    the induction hypothesis. Setting $\zeta\defeq {f}^n(z)$ and $\omega \defeq {g}^n({z})$,
     we have $\lvert \zeta - \omega\rvert \leq \delta'(\eps)$ by the induction hypothesis,
     and, in particular, $\zeta\in U$. Thus, 
\[   \lvert f^{n+1}(z)-g^{n+1}(z) \rvert \leq |f(\zeta)-g(\zeta)|+|g(\zeta)-g(\omega)|<\varepsilon, \] 
    as required.
\end{proof} 

\begin{cor}[Approximating univalent iterates]\label{cor:approx}
 Let $U\subseteq\C$ be open and $g\colon U\to\C$ be holomorphic. Suppose that $G\subseteq U$ is open and 
   $K\subseteq G$ is closed with the following properties for some $n\geq 1$:
   \begin{enumerate}[(a)]
     \item $g^n$ is defined and univalent on $G$;
     \item $\lvert g'\rvert$ is bounded from above and below by positive constants on $U$;\label{item:derivativeboundedonU}
     \item $\dist(K,\partial G)>0$.\label{item:distancetopartialG}
   \end{enumerate}
   Then for every $\eps>0$, there is $\delta>0$ 
     with the following property.
   For any holomorphic $f\colon U\to\C$ with $\lvert f(z) - g(z)\rvert \leq \delta$ for all
     $z\in U$, $f^n$ is defined and injective on $K$, $\lvert f^k(z) - g^k(z)\rvert \leq \eps$
     on $K$ for $k\leq n$, 
     $\lvert f'\rvert$ is bounded from above and below by positive constants on 
     $\widehat{K} = \bigcup_{k=0}^{n-1} f^k(K)$, and $f$ is uniformly continuous at every point
     of $\widehat{K}$ in the sense of Remark~\ref{rmk:uniformcontinuity}.
\end{cor}
\begin{rmk}
  If $K$ is compact, then hypotheses~\ref{item:derivativeboundedonU} and~\ref{item:distancetopartialG} can be omitted. Indeed, in this 
  case~\ref{item:distancetopartialG} is trivial and~\ref{item:derivativeboundedonU} 
     is automatically satisfied for the restriction of $f$ to 
      a neighbourhood of $\widehat{K}$. 
\end{rmk}
\begin{proof}
 Note that $g^k$ is univalent on $G$ for $1\leq k \leq n$. 
    Furthermore, by Koebe's theorem and the assumption on $g'$, 
    $g$ is uniformly
   continuous at every point of $g^{k-1}(K)$ and 
   $\dist(g^k(K),\partial U) > 0$, for all $k\geq 1$. 
    So $\dist(\widehat{K},\partial U)>0$ and the hypotheses of
   Lemma~\ref{lem:approx2} are satisfied. Thus, there is $\delta>0$ such that 
   $\lvert f^k(z) - g^k(z)\rvert \leq \eps$ for all $z\in K$ and all $k\leq n$ 
   if $\lvert f(z)-g(z)\rvert \leq \delta$ on $U$. 
   
 If $\eps$ is chosen small enough, then for $k\leq n$ 
   the hypotheses of Lemma~\ref{lem:approx1} 
   are satisfied for $\phi = g^k$ on $G$. It follows that $f^k$ is injective 
   on $K$ with $\lvert (f^k)'\rvert$ bounded above and below by positive constants. 
   Since this holds for all $k\leq n$, we see that $\lvert f'\rvert$ is bounded from above and below by
   positive constants on each $f^k(K)$. This completes the proof of the lemma, apart from the statement about uniform continuity. 

  For that final statement, observe that for $\eta>0$, 
     the set $K(\eta) \defeq \{z\in \C\colon \dist(z,K)\leq \eta\}$ also satisfies the 
     hypotheses of the corollary, if $\eta < \dist(K,\partial G)$. Now we  apply the 
     part of the corollary that we just proved to $K(\eta)$, and conclude that the derivative of $f$ is bounded from above on 
     $\widehat{K}(\eta) = \bigcup_{k=0}^{n-1} f^k(K(\eta))$. By~\ref{item:derivativeboundedonU}, 
     $\dist(\partial \widehat{K}(\eta),\widehat{K})>0$. As noted in Remark~\ref{rmk:uniformcontinuity}, this implies that $f$ is uniformly continuous at every point
     of $\widehat{K}$, as desired.
\end{proof}

In Sections~\ref{sec:uniform} and~\ref{sec:maverickconstruction}, we also
  require the following fact about approximating compact and full sets from
  above by finite collections of Jordan domains; see Figure~\ref{fig:rabbit}.
    This is a classical fact of plane topology,
    used already by Runge to prove Theorem~\ref{thm:runge}~\cite[pp.~230--231]{runge85} (see also \cite[Theorem~2 on pp.~7--8]{walsh69}).
    For the reader's convenience, we provide a simple proof.
  
  \begin{figure} 
\includegraphics[width=.55\linewidth]{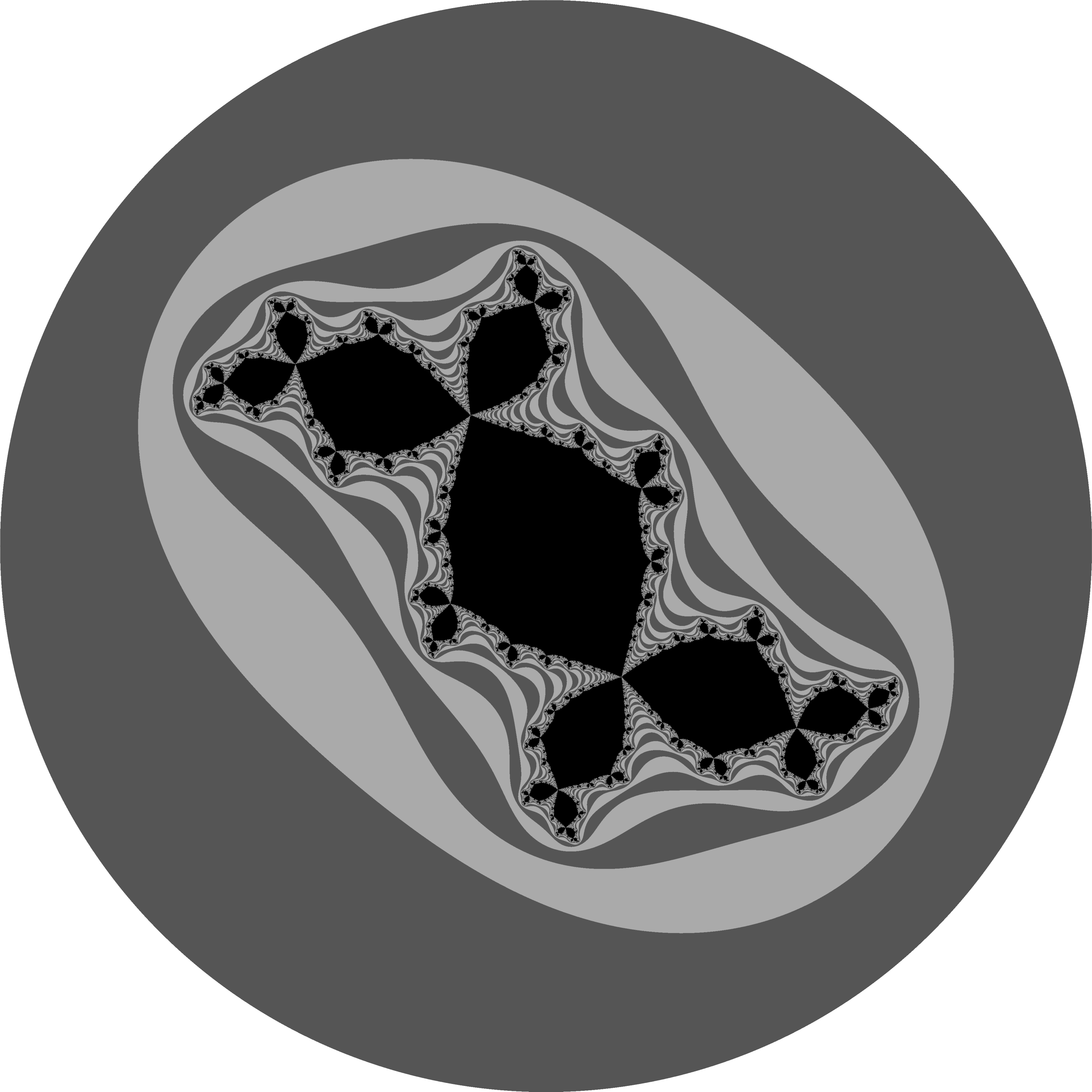} 
\caption{Nested sequences of compacta $(K_j)$ shrinking down to the filled-in Julia set $K = \K(p)$ of  $p(z)=z^2+c$ with $c\approx -0.12+0.74i$.}
\label{fig:rabbit}
\end{figure}

  \begin{lem}[Approximation by unions of Jordan domains]\label{lem:Kn}
   Let $K\subseteq \C$ be compact and full. Then there exists a sequence $(K_j)_{j=0}^{\infty}$
     of compact and full sets
    such that 
     $K_{j}\subseteq \interior(K_{j-1})$ for all $j\in\N$ and 
         \[ \bigcap_{j=0}^\infty K_j = K. \]
    Each $K_j$ may be chosen to be bounded by a finite disjoint union of closed Jordan curves. 
 \end{lem}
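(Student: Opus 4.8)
We may assume $K\neq\emptyset$ (if $K=\emptyset$, take $K_j\defeq\overline{\D(0,2^{-j})}$). The plan is to realise each $K_j$ as the \emph{fill} of a sublevel set of a smoothed distance function. Set $u(z)\defeq\dist(z,K)$; this is $1$-Lipschitz, proper (since $u(z)\to\infty$ as $z\to\infty$), and satisfies $u^{-1}(0)=K$. For $\delta>0$ let $v_\delta\defeq u*\rho_\delta$, where $\rho_\delta$ is a smooth mollifier supported in $\D(0,\delta)$; then $v_\delta$ is $C^\infty$, still $1$-Lipschitz and proper, and $\|v_\delta-u\|_\infty\leq\delta$. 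By Sard's theorem I may choose a regular value $c=c(\delta)\in(2\delta,3\delta)$ of $v_\delta$, so that $M_\delta\defeq\{v_\delta\leq c\}$ is compact (by properness) and a smooth $2$-manifold with boundary; thus $\partial M_\delta=v_\delta^{-1}(c)$ is a finite disjoint union of smooth Jordan curves. Since $\|v_\delta-u\|_\infty\leq\delta$ and $2\delta<c\leq3\delta$, we get $K\subseteq\{v_\delta<c\}\subseteq\interior(M_\delta)$ and $M_\delta\subseteq\{u\leq c+\delta\}\subseteq\{u\leq 4\delta\}$.

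Now put $K_\delta\defeq\Fill(M_\delta)$, the complement of the unbounded component of $\C\setminus M_\delta$; this is compact and full, and $K\subseteq\interior(M_\delta)\subseteq\interior(K_\delta)$. The point to check is that $\partial K_\delta$ is again a finite disjoint union of smooth Jordan curves. A short point-set argument gives $\partial(\Fill(A))\subseteq\partial A$ for every compact $A$: a point $p\in\partial(\Fill A)$ lies in $\Fill(A)$ but neither in $\interior(A)$ nor in a bounded complementary component of $A$, hence in $\partial A$. Applying this to $A=M_\delta$ and using that each component $\Gamma_i$ of $\partial M_\delta$ is a smooth Jordan curve with $M_\delta$ lying locally on one side of it, one checks that $\Gamma_i\cap\partial K_\delta$ is both open and closed in $\Gamma_i$; so $\partial K_\delta$ is the union of those $\Gamma_i$ that remain after filling, again a finite disjoint union of smooth Jordan curves. (A more classical alternative replaces $M_\delta$ by the union of the closed squares of a fine grid meeting $K$: after a generic translation of the grid, and a local modification at the finitely many corner pinch points, the boundary of this union is a finite disjoint union of polygonal Jordan curves, which one then fills in.)

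It remains to extract a nested sequence converging to $K$. I first claim that the compact full sets $\Fill(\{u\leq\eps\})$ are nested in $\eps$, with $\bigcap_{\eps>0}\Fill(\{u\leq\eps\})=K$. The inclusion $\supseteq K$ is clear. For the reverse, let $z_0\notin K$; I join $z_0$ to a point of $\C\setminus\overline{\D(0,R+1)}$, where $R>0$ is chosen with $K\subseteq\D(0,R)$, by a polygonal arc $\gamma$ inside the connected open set $\C\setminus K$. Then $\gamma\cup(\C\setminus\overline{\D(0,R+1)})$ is a connected, unbounded subset of $\C\setminus K$ at some positive distance $\eps_0$ from $K$, so for $\eps<\eps_0$ it is contained in the unbounded component of $\C\setminus\{u\leq\eps\}$; hence $z_0\notin\Fill(\{u\leq\eps\})$. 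It follows that $\Fill(\{u\leq\eps\})$ is contained in any prescribed open neighbourhood of $K$ once $\eps$ is small enough. Now build $\delta_0>\delta_1>\cdots\to0$ recursively: given $\delta_{j-1}$ and $c(\delta_{j-1})$, the open set $W_{j-1}\defeq\{u<c(\delta_{j-1})-\delta_{j-1}\}$ contains $K$ and satisfies $W_{j-1}\subseteq\{v_{\delta_{j-1}}<c(\delta_{j-1})\}\subseteq\interior(K_{j-1})$, where $K_{j-1}\defeq K_{\delta_{j-1}}$; choose $\delta_j<\delta_{j-1}/2$ small enough that $\Fill(\{u\leq 4\delta_j\})\subseteq W_{j-1}$, and a regular value $c(\delta_j)\in(2\delta_j,3\delta_j)$. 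Then $K_j\defeq K_{\delta_j}=\Fill(M_{\delta_j})\subseteq\Fill(\{u\leq 4\delta_j\})\subseteq W_{j-1}\subseteq\interior(K_{j-1})$, and since $K\subseteq K_j\subseteq\Fill(\{u\leq4\delta_j\})$ with $\delta_j\to0$, we conclude $\bigcap_{j}K_j=K$. Setting $K_0\defeq K_{\delta_0}$ finishes the construction.

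The only genuinely non-routine ingredient is the plane-topology fact that filling a set bounded by finitely many disjoint smooth (or polygonal) Jordan curves produces again a set bounded by finitely many disjoint Jordan curves — equivalently, that filling in the bounded complementary components removes whole boundary curves and creates no new ones. Once this is granted, the mollification, the appeal to Sard's theorem, the distance estimates, and the recursive choice of the $\delta_j$ are all routine.
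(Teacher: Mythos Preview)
Your proof is correct, but takes a genuinely different route from the paper. The paper first sets $\tilde K_j\defeq\Fill(\{z:\dist(z,K)\leq 1/j\})$, checks that these are nested with $\bigcap_j\tilde K_j=K$, and then observes that each component $V_\ell$ of $\interior(\tilde K_j)$ meeting $K$ is simply connected; it then invokes the Riemann mapping theorem to place a Jordan domain $U_\ell\subset V_\ell$ around $K\cap V_\ell$ and sets $K_j=\bigcup_\ell\overline{U_\ell}$. So the paper produces the Jordan curves by conformal mapping \emph{inside} the fill of a distance neighbourhood, whereas you produce them by smoothing the distance function and applying Sard's theorem, then filling the resulting sublevel set. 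Your approach yields smooth (not merely Jordan) boundary curves and makes the nesting $K_j\subset\interior(K_{j-1})$ completely explicit via the recursive choice of $\delta_j$; the paper's argument is shorter and uses only classical complex analysis, but is terser about the nesting step. The grid-square alternative you mention in passing is essentially Runge's original 1885 argument, to which the paper also alludes.
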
 
 \begin{proof}
   Define 
    \[
     \tilde{K}_j:=\textup{fill}\bigl(\bigl\{z\in \C \colon \textup{dist}(z,K)\le \tfrac{1}{j}\bigl\}\bigl). \]
    Each $\tilde{K}_j$ is compact and full by definition, and clearly $K\subseteq \tilde{K}_{j+1} \subseteq \interior(\tilde{K}_j)$ for all $j$.
    Any $z\in \C\setminus K$ can be connected to $\infty$ by a curve $\gamma$ disjoint from $K$,
       and hence we have $z\notin \tilde{K_j}$ for all sufficiently large $j\in \N$.  
       
    Now fix $j$ and let $V_1,\dots,V_m$ be the finitely many connected components of $\interior(\tilde{K}_j)$ that intersect
      $K$. Then each $V_{\ell}$ is a simply connected domain, and hence (say by the Riemann mapping theorem) there is 
      a Jordan domain $U_{\ell}\subseteq V_{\ell}$ with $K\cap V_{\ell}\subseteq U_{\ell}$. The sets
       $K_j \defeq \bigcup_{\ell=1}^m \overline{U_{\ell}}$ then have the required property. 
 \end{proof}

\section{Uniform escape}

\label{sec:uniform}

In this section, we prove Theorem~\ref{thm:main} when $Z_{\BU}=\emptyset$,
  in which case we can choose $f$ such that the iterates converge to
  infinity uniformly on all of $K$. 

\begin{thm}[Wandering compacta with uniform escape]\label{thm:uniform}
Let $K\subseteq \C$ be a full compact set. Then there exists a transcendental entire function $f$ such that 
 \begin{enumerate}[(i)]
     \item  $f^n(K)\cap f^m(K)=\emptyset$ when $n\neq m$;\label{item:uniformdisjointness}
    \item ${f^n}\vert_K \to \infty$ uniformly as $n\to\infty$.\label{item:uniformescape}
    \item $\partial K\subseteq \J(f)$;\label{item:uniformboundaryinJ}
     \item  every connected component of $\interior(K)$ is a wandering domain of $f$;\label{item:uniformwandering}
   \end{enumerate}
\end{thm} 

This result was proved by
  Boc Thaler~\cite{bocthaler21} in the case where $K$ is the closure of a simply connected
  domain $U$. 
  Apart from a change of perspective~-- starting with the compact set $K$ instead of the
  domain $U$~-- our proof follows similar lines, but with some 
  modifications that will become important later. For example,~\cite{bocthaler21} uses a stronger version of Runge's theorem, due to Eremenko and Lyubich, in which
  the approximating function is required to agree with the original one at finitely many given points. This allows Boc Thaler
  to prescribe exactly the orbits of a sequence of points accumulating on the boundary $\partial U$, 
  ensuring that this boundary is contained in the Julia set. We instead use the original, unmodified version of Runge's theorem, which
  yields less control over the exact orbits, but now allows us to ensure that the union of a sequence of potentially uncountable sets (the sets $P_j$ below), accumulating on
  $\partial K$, belongs to a basin of attraction. While this does not play an essential  role in the proof of Theorem~\ref{thm:uniform} (other than to simplify it), it
  is of crucial importance in the
  proof of Theorem~\ref{thm:eremenkocounterexample} in Section~\ref{sec:eremenko}, where the $P_j$ are replaced by a sequence of unbounded connected sets that 
  will separate the desired point component from other points in the escaping set. This approach is also essential for Theorem~\ref{thm:strongeremenko} below, which provides 
   simple new counterexamples to the
  strong Eremenko conjecture.

   To set up the proof of Theorem~\ref{thm:uniform}, let $K\subseteq\C$ be a full compact set and choose
      a sequence $(K_j)$ of approximating sets according to Lemma~\ref{lem:Kn}. By applying an affine
      transformation, we may assume without loss of generality that $K_0\subseteq \DD$. 
      For $j\geq 0$, also choose a non-separating compact and $2^{-j}$-dense subset
         \[ P_j \subseteq \partial K_j. \]
      That is, $\dist(z,P_j)\leq 2^{-j}$ for all $z\in \partial K_j$; in particular, $\partial K$ is the Hausdorff limit of the sets $P_j$. 
        We shall construct a 
        function $f$ such that all $P_j$ are contained in a basin of attraction, while $f^n$ tends to infinity on $K$ itself; 
        this ensures that $\partial K\subseteq \J(f)$. In most of our applications,
        we may choose the $P_j$ as finite sets, but for the proof
        of Theorem~\ref{thm:strongeremenko} below we shall use larger sets $P_j$. 

For $j\geq -1$, consider the discs
    \[ D_j \defeq \D(3j,1)=\big\{z\in\C\colon |z-3j|<1\big\}. \] 
         Our main goal now is to prove the following proposition, which implies Theorem~\ref{thm:uniform}.

     \begin{prop}\label{prop:main}
       There exists a transcendental entire function $f$ with the following properties:
        \begin{enumerate}[(a)]
          \item $f(\overline{D_{-1}})\subseteq D_{-1}$;\label{item:discinvariant}
          \item $f^{j+1}(P_{j})\subseteq D_{-1}$ for all $j\geq 0$;\label{item:Pjindisc}
          \item $f^j$ is injective on $K_j$ for all $j\geq 0$, with
               $f^j(K_{j})\subseteq D_{j}$.\label{item:Kjindisc}
        \end{enumerate}
     \end{prop}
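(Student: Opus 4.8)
The plan is to construct $f$ as a limit of an inductively defined sequence of entire functions $(g_n)$, where at stage $n$ we use Runge's theorem (Theorem~\ref{thm:runge}) to approximate a piecewise-defined holomorphic model, and then appeal to the approximation lemmas (Lemma~\ref{lem:approx1}, Lemma~\ref{lem:approx2}, Corollary~\ref{cor:approx}) to ensure that the properties established at earlier stages survive the perturbation. The model at stage $n$ is defined on the compact full set consisting of $\overline{D_{-1}}$ (where it is the identity, or a fixed contraction toward $3(-1)=-3$ to make it attracting), together with $\overline{D_{n-1}}$, on which the model is a conformal map sending $\overline{D_{n-1}}$ into $D_n$ and, crucially, sending the previously-placed image $g_{n-1}^{n-1}(P_{n-1})\subset g_{n-1}^{n-1}(K_{n-1})\subset D_{n-1}$ into $D_{-1}$. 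Since $D_{-1}$ and the $D_j$ for $j\geq 0$ are pairwise disjoint discs, the union $\overline{D_{-1}}\cup\overline{D_{n-1}}$ is compact and full, so Runge applies. One then chooses the approximation error small enough (using the remark that the hypotheses of Lemma~\ref{lem:approx1} and Lemma~\ref{lem:approx2} are automatic for compact sets) that: $g_n$ still maps $\overline{D_{-1}}$ strictly inside itself; $g_n^{n}$ is injective on $K_n$ with $g_n^n(K_n)\subset D_n$ (this uses that $g_{n-1}^{n-1}$ was already injective on $K_{n-1}\supset \interior$-neighbourhood of $K_n$, composed with the new conformal map $\overline{D_{n-1}}\to D_n$); and $g_n^{j+1}(P_j)$ remains inside $D_{-1}$ for all $j\leq n-1$ since those points have already entered the invariant disc and stay there by property~\eqref{item:discinvariant}.

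The key quantitative device is a telescoping choice of errors: at stage $n$ we require $\sup_{\overline{\DD}\cup\overline{D_n}}|g_n-g_{n-1}|<\eps_n$ where $\eps_n$ is smaller than all the thresholds coming from Lemma~\ref{lem:approx1} and Corollary~\ref{cor:approx} applied to the finitely many conditions (a)--(c) indexed by $j\leq n$, and also small enough that $\sum_{m>n}\eps_m$ does not disturb them — note here that outside the relevant compacta we have no control, so the domain $U$ in these lemmas must be taken to be a fixed bounded neighbourhood of $\overline{D_{-1}}\cup\bigcup_{j\le n}\overline{D_j}$, and we only ever need the approximation to be good on such compacta. Because the sets $K_j$ are nested with $K_j\subset\interior(K_{j-1})$, the injectivity of $g_n^n$ on $K_n$ follows by writing $g_n^n = (\text{new map on } D_{n-1})\circ g_n^{n-1}$ and using that $g_n^{n-1}$ is close to $g_{n-1}^{n-1}$, which was injective on the larger set $K_{n-1}$; Corollary~\ref{cor:approx} is exactly tailored to propagate this. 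The limit $f=\lim g_n$ exists as an entire function, is transcendental because the conditions force infinitely many distinct discs in its image pattern (a polynomial cannot map infinitely many disjoint discs $\overline{D_{n-1}}$ univalently onto regions in the disjoint discs $D_n$ while fixing a disc — more simply, one arranges the construction so $f$ is unbounded on a bounded set of preimages, or notes the escape dynamics below is impossible for polynomials), and satisfies (a), (b), (c) by construction.

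Finally I would record why this proposition yields Theorem~\ref{thm:uniform}: property~(c) gives $f^n(K_n)\subset D_n$, and since $K\subset K_n$ for all $n$ we get $f^n(K)\subset D_n = \D(3n,1)$, so $f^n|_K\to\infty$ uniformly, giving~(iv); the discs $D_n$ being pairwise disjoint gives $f^n(K)\cap f^m(K)=\emptyset$ for $n\neq m$, which is~(ii); injectivity of $f^j$ on $K_j\supset K$ together with~(ii) shows each component of $\interior(K)$ is wandering, giving~(iii); and for~(i), each $z\in\partial K$ is a limit of points in $P_j$ (as $\partial K$ is the Hausdorff limit of the $P_j$), and property~(b) together with~(a) shows every point of $P_j$ is attracted to the fixed disc $D_{-1}$, hence lies in an attracting basin $\subset\F(f)$, whereas nearby points of $\partial K\subset K$ escape; since $\partial K$ is not in the interior of the escaping set and is a limit of Fatou points with genuinely different behaviour, $\partial K\subset\J(f)$ by the standard argument that the Julia set is the boundary between distinct stable behaviours.

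\textbf{Main obstacle.} The delicate point is the bookkeeping of the error bounds: we must verify that the finitely many constraints active at stage $n$ impose thresholds that are compatible with all later perturbations, i.e. that choosing $\eps_n$ after $g_{n-1}$ is fixed (so the thresholds from Lemma~\ref{lem:approx1}/Corollary~\ref{cor:approx} are already determined) and then demanding $\sum_{m\ge n}\eps_m$ stays below them is self-consistent. This is routine in spirit but requires care because property~(c) at level $j$ involves the $j$-fold iterate, so a perturbation at any later stage $n>j$ feeds through $n-j$ compositions; one controls this precisely by applying Lemma~\ref{lem:approx2} on a fixed neighbourhood $U$ of $\bigcup_{k=0}^{j-1} f^k(K_j)$, whose closure is compact and disjoint from nothing problematic, so the lemma's $\delta$ depends only on data fixed by stage $j$.
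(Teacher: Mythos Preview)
Your proposal has two genuine gaps at the heart of the inductive step.

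First, you say the model on $\overline{D_{n-1}}$ is ``a conformal map sending $\overline{D_{n-1}}$ into $D_n$ and, crucially, sending $g_{n-1}^{n-1}(P_{n-1})\subset D_{n-1}$ into $D_{-1}$.'' This is impossible: $\overline{D_{n-1}}$ is connected and $D_n\cap D_{-1}=\emptyset$, so no continuous map on $\overline{D_{n-1}}$ can have image meeting both. The paper resolves this by exploiting that $P_j\subset\partial K_j$ while $K_{j+1}\subset\interior(K_j)$. Choosing a full compact $L_j$ with $K_{j+1}\subset\interior(L_j)\subset L_j\subset\interior(K_j)$, the sets $Q_j\defeq f_j^{\,j}(P_j)$ and $f_j^{\,j}(L_j)$ are disjoint compact subsets of $D_j$. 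The model is then defined \emph{separately} on a full neighbourhood $R_j$ of $Q_j$ (where it is the constant $-3\in D_{-1}$) and on $f_j^{\,j}(L_j)$ (where it is the translation $z\mapsto z+3$ into $D_{j+1}$). The Runge set thus splits inside $D_j$ into two pieces with different targets; it is not a single disc carrying a single conformal map.

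Second, your Runge set $\overline{D_{-1}}\cup\overline{D_{n-1}}$ gives no control whatsoever over $g_n$ on $D_0,\dots,D_{n-2}$. But for $j\le n-2$ the orbit of $P_j$ under $g_{n-1}$ passes through $D_0,\dots,D_j$ before reaching $D_{-1}$, and the orbit of $K_j$ passes through $D_0,\dots,D_{j-1}$. Properties~(b) and~(c) for these earlier $j$ therefore cannot be propagated unless $g_n$ also approximates $g_{n-1}$ on those discs; Runge's theorem gives you nothing outside the set on which you approximate. The paper handles this by enlarging the Runge set to include the large disc $\Delta_j=\overline{D(-3,1+3j)}\supset \overline{D_{-1}}\cup\overline{D_0}\cup\cdots\cup\overline{D_{j-1}}$, and setting the model $g_{j+1}$ equal to the previous approximant $f_j$ on all of $\Delta_j$. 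This is exactly what makes the telescoping bounds $\eps_{j+1}\le\eps_j/2$ effective: each $f_{j+1}$ is $\eps_{j+1}$-close to $f_j$ on the whole of $\Delta_j$, so the limit $f$ is $2\eps_k$-close to $g_k$ on $A_k$ for every $k$, and the finitely many constraints established at stage $k$ survive all later perturbations.
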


 \begin{proof}[Proof of Theorem~\ref{thm:uniform}, using Proposition~\ref{prop:main}]
      Let $j\geq 0$; then $K\subseteq K_j$ by choice of $K_j$ and hence $f^j(K)\subseteq D_{j}$ by~\ref{item:Kjindisc}. This establishes~\ref{item:uniformdisjointness} and~\ref{item:uniformescape}.
      
       On the other hand, by~\ref{item:discinvariant} and~\ref{item:Pjindisc}, 
      we have  $f^k(P_{j})\subseteq D_{-1}$ for $k > j$. Since every point $z\in \partial K\subseteq K$ is the limit of 
       a sequence of points $p_j\in P_j$, it follows that the family $(f^k)_{k=1}^{\infty}$ is not equicontinuous at $z$, and thus 
       $z\in \J(f)$. This proves~\ref{item:uniformboundaryinJ}. 
       If $U$ is a connected component of $\interior(K)$, then $U\subseteq \F(f)$ 
       by~\ref{item:uniformescape} and the definition of the Fatou set, while
       $\partial U\subseteq \J(f)$ by~\ref{item:uniformboundaryinJ}. So $U$ is a 
       Fatou component, and it is wandering by~\ref{item:uniformdisjointness}.
 \end{proof}

\begin{proof}[Proof of Proposition~\ref{prop:main}]
  We construct $f$ as the limit of a sequence of polynomials
    $(f_j)_{j=0}^{\infty}$,
   which are defined inductively using Runge's theorem. More precisely, 
   for $j\geq 1$, the function $f_j$ approximates a function $g_j$, defined and holomorphic
   on a neighbourhood of a compact set $A_j\subseteq\C$, 
   up to an error of at most $\eps_j > 0$. The function $g_j$
   in turn is defined in terms of the previous function $f_{j-1}$.

   Define 
     \[ \Delta_j       \defeq \overline{\D(-3,1+3j)} \supseteq D_{j-1}\]
    for $j\geq 0$. 
   The inductive construction ensures the following properties:
    \begin{enumerate}[(i)]
      \item For every $j\geq 0$, 
         $f_{j}^j$ is injective on $K_j$ and $f_{j}^j(K_j)\subseteq D_j$.\label{item:injectivity}
      \item For $j\geq 1$, $\Delta_{j-1} \subseteq A_{j}\subseteq \Delta_{j}$.\label{item:nested}
      \item $\eps_1 < 1/2$ and $\eps_{j} \leq \eps_{j-1}/2$ for $j\geq 2$.\label{item:inductiveclose}
    \end{enumerate}

  To anchor the induction, we set
     $f_{0}(z) \defeq -3$ for $z\in\C$ and note that~\ref{item:injectivity} 
     holds trivially for $j=0$. 

   Let $j\geqslant 0$ and suppose that $f_{j}$ has been defined, and that $\eps_{j}$ has been defined if $j\geq 1$. 
      Applying Lemma~\ref{lem:Kn} to $K_{j+1}$, we find a full compact set
        $L_{j}\subseteq K_{j}$ with $K_{j+1}\subseteq \interior(L_{j})$. 
      Set $Q_{j} \defeq f_{j}^{j}(P_{j})$. 
      By~\ref{item:injectivity},  
        $Q_{j}\subseteq D_{j}$ and 
        $Q_{j} \cap (\partial D_{j} \cup f_{j}^{j}(L_{j})) = \emptyset$. 
       Let $R_{j}$ be a compact full neighbourhood of $Q_{j}$ disjoint from
         $\partial D_{j}$ and $f_{j}^{j}(L_{j})$, and set
         \[ A_{j+1} \defeq \Delta_{j} \cup R_{j} \cup f_{j}^{j}(L_{j}). \] 
       We have $R_{j}\subseteq D_{j}$ 
          and $f_{j}^{j}(L_{j})\subseteq f_{j}^{j}(K_{j})\subseteq D_{j}$ by the inductive hypothesis~\ref{item:injectivity}. In particular, neither set intersects 
        $\Delta_{j}$. 
        It follows that $A_{j+1}$ satisfies~\ref{item:nested}
        and the hypotheses 
         of Runge's theorem. 
      We define
         \[ g_{j+1} \colon A_{j+1} \to \C;\quad z\mapsto
             \begin{cases}
                 f_{j}(z), & \text{if } z\in \Delta_{j}, \\
                 - 3, &\text{if }z\in R_{j}, \\ 
                 z+3, &\text{if } z\in f^{j}_{j}(L_{j}). \end{cases} \] 
      (See Figure~\ref{fig:proofsketch}.) By definition, the function $g_{j+1}$ extends analytically 
        to a neighbourhood of $A_{j+1}$. 
       Observe that $g_{j+1}^{j+1}(P_j) = g_{j+1}(Q_j) \subseteq g_{j+1}(R_j) =\{-3\} \subseteq D_{-1}$, and that 
        $g_{j+1}^{j+1}$ is defined and univalent on $\interior(L_{j})$ by~\ref{item:injectivity}.

\begin{figure}
\begin{center}
\def\svgwidth{\textwidth}
\input{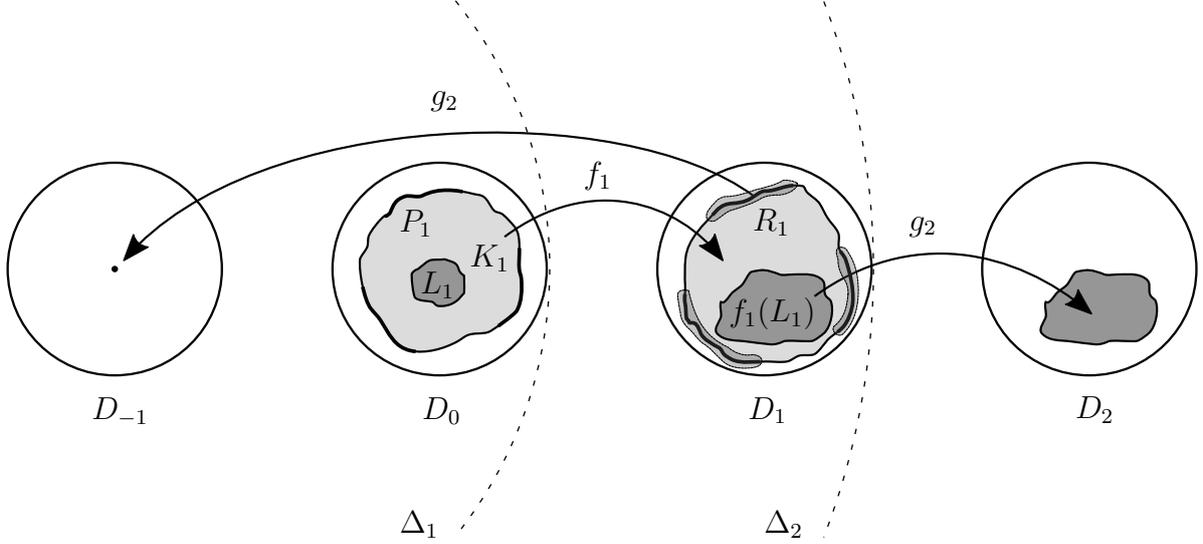} 
\end{center}
\vspace*{-20pt}
\caption{The construction of $g_j$ in the proof of Proposition~\ref{prop:main}, for $j=2$.}\label{fig:proofsketch}
\end{figure}
       
     Now choose $\eps_{j+1}$ according to~\ref{item:inductiveclose}
       and sufficiently small that  
       any entire function~$f$ with 
         $\lvert f(z) - g_{j+1}(z)\rvert \leq 2\eps_{j+1}$ on $A_{j+1}$ satisfies: 
       \begin{enumerate}[(1)]
          \item $f^{j+1}(P_{j})\subseteq D_{-1}$;\label{item:pjclose}
          \item $f^{j+1}$ is injective on $K_{j+1}\subseteq \interior(L_{j})$;\label{item:fjinjective}
          \item $f^{j+1}(K_{j+1}) \subseteq D_{j+1}$.\label{item:fjimage} 
       \end{enumerate}
       Here~\ref{item:pjclose} is 
         possible by Lemma~\ref{lem:approx2},
           and~\ref{item:fjinjective} and~\ref{item:fjimage} are possible by 
           Corollary~\ref{cor:approx}. 
       We now let $f_{j+1}\colon \C\to\C$ be
       a polynomial approximating $g_{j+1}$ up to an error of at most $\eps_{j+1}$, 
       according to Lemma~\ref{thm:runge}. This completes the inductive 
       construction. 
       
    Condition~\ref{item:inductiveclose} implies that $(f_j)_{j=k}^{\infty}$ forms
     a Cauchy sequence on every set $A_k$, and by~\ref{item:nested}, $\bigcup_{k=1}^\infty A_k=\C$. So the functions $f_j$
     converge locally uniformly 
     to an entire function~$f$. For $1\leq k\leq j$,
     $$
     |f_j(z)-g_k(z)|\leq \eps_k+ \dots + \eps_j \leq 2\eps_k\quad \textup{for all } z\in A_k.
     $$ 
     Hence the limit function $f$ satisfies
     $\lvert f(z) - g_k(z)\rvert\leq 2\eps_k$ for all $z\in A_k$ and $k\geq 1$. 

    Since $g_{1}(D_{-1})=f_{0}(D_{-1})=\{-3\}$, and 
       $2\eps_1 < 1$, it follows that
         $f(\overline{D_{-1}})\subseteq D_{-1}$.  
        Moreover, 
    $f^{j+1}(P_j)\subseteq D_{-1}$ for $j\geq 0$ by~\ref{item:pjclose}. Finally, by~\ref{item:fjinjective} and~\ref{item:fjimage}, 
    $f^j$ is injective on $K_j$ and $f^j(K_j)\subseteq D_j$. This completes the proof of Proposition~\ref{prop:main}.
\end{proof}

We now prove Theorem~\ref{thm:fatou} regarding the existence of Fatou components with a common boundary.

\begin{proof}[Proof of Theorem~\ref{thm:fatou}]
Let $X\subseteq \C$ be  a continuum
 such that $\C\setminus X$ has infinitely many connected components and 
 the boundary of every such component coincides with $X$. As mentioned
 in the introduction, such a continuum was first constructed by
 Brouwer~\cite[p.~427]{brouwer10}, and can be obtained using
 the construction described by Yoneyama~\cite[p.~60]{yoneyama17}.
Let $U$ be the unbounded connected component of $\C\setminus X$. 
 
 The set $K\defeq \Fill(X)=\C\setminus U$ satisfies $\partial K = \partial U = X$. 
  Apply Theorem~\ref{thm:main} to the full continuum $K$ to obtain 
  a transcendental entire  
  function $f$ for which 
  $X = \partial K\subseteq \J(f)$, and every connected component of
  $\interior(K)\subseteq \F(f)$ is a simply connected wandering domain. Since there
  are infinitely many such components, each of which is bounded by~$X$, Theorem~\ref{thm:fatou} is proved.
\end{proof}

Note that the set $K$ constructed in the proof of Theorem~\ref{thm:uniform} escapes to infinity rather slowly, but 
  we can easily modify the construction to increase the rate of escape, by replacing the discs $D_j$ by any other disjoint sequence of discs tending to infinity
  (and the map $z+3$ used in the definition of $g_{j+1}$ by an affine map taking $D_j$ to $D_{j+1}$). Moreover, the disc $D_{j+1}$ need not be chosen in advance; it may be chosen
  to depend on the map $f_j$. In this manner, we can ensure that $K$ belongs to the \emph{fast escaping set} 
  \begin{equation}\label{eqn:fastescapingset}
     A(f) \defeq \{z\in I(f)\colon \text{there is}~ n_0\geq 0~  \text{such that}~ \lvert f^{n_0+j}(z) \rvert \geq M^j(r,f) ~\text{for all}~ j\geq 0 \}. \end{equation}
  Here $r$ is any number sufficiently large that $D(0,r)\cap J(f) \neq \emptyset$, and $M^j(r,f)$ denotes the $j$-th iterate of the maximum modulus function 
   $r\mapsto M(r,f)\defeq \max_{\lvert z\rvert \leq r} \lvert f(z)\rvert$.  
 \begin{prop}\label{prop:fastescaping}
    The function $f$ in Theorem~\ref{thm:uniform} can be chosen such that $K\subseteq A(f)$.
 \end{prop} 
 \begin{proof}
   We modify the inductive construction of $g_{j+1}$ by choosing both the discs $\Delta_j$ and $D_j$ inductively during the construction. Each $\Delta_j$ is a closed disc of
   radius $r_j$ 
   centred at the origin, where $r_j$ is a rapidly increasing sequence, 
   while all $D_j$ are open discs of radius $1$. As before, they
   satisfy $D_j\subseteq \Delta_{j+1}$ and $D_j\cap \Delta_j=\emptyset$.
     The initial choices of $D_{-1}$, $D_0$ and $\Delta_0$ remain unchanged. 
   The radius $r_1$ is chosen large enough to ensure that $\Delta_0\cup D_0 \subseteq \Delta_1$.
     
    Prior to defining $g_{j+1}$, we choose $r_{j+1}$ (if $j>0$) greater than $M(r_j,f_j)+2$, and then choose the disc $D_{j+1}$ to be a disc of radius $1$ that is disjoint from $\Delta_{j+1}$. This construction ensures that $D_j\subseteq \Delta_{j+1}$: this holds by assumption for $j=0$, and follows inductively since
      $D_{j+1}$ contains the image of a point in $D_j$, which has modulus less than $M(r_j,f_j)<r_{j+1}-2$. 
      
   The remainder of the construction
     remains unchanged, except that on $f_j^j(L_j)$, we define $g_{j+1}$ to be the restriction of an affine function mapping $D_j$ to $D_{j+1}$. 

   Let $f$ be the resulting function. Since $\lvert f - f_j\rvert < 2\eps_j<2$ on $\Delta_j$, we see that $r_{j+1} > M(r_j,f_j)+2 > M(r_j,f)$ for all $j\geq 1$. In particular,
      $r_j > M^{j-1}(r_1,f)$. If $z\in K$, then $f^{j+1}(z) \in D_{j+1}$, and hence $\lvert f^{j+1}(z)\rvert > r_{j+1} > M^j(r_1,f)$ for all $j\geq 0$. Since 
      $D(0,r_1)$ contains $K$, and hence intersects the Julia set, it follows that $K\subseteq A(f)$, as claimed. 
 \end{proof}

As mentioned in the introduction, we can use Proposition~\ref{prop:main} to give new counterexamples to the strong version of Eremenko's conjecture.

\begin{thm}[Counterexamples to the strong Eremenko conjecture]
\label{thm:strongeremenko} 
Let $X\subseteq \C$~be a full continuum. Then there exists a transcendental entire function~$f$ such that 
every path-connected component of $X$ is a path-connected component of the escaping set $\I(f)$, and every path-connected component of $\partial X$ is a 
path-connected component of $\J(f)$. In particular, no point of $X$ can be connected to $\infty$ by a curve in $\I(f)$.
\end{thm}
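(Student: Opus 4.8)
The plan is to run essentially the same construction as in Proposition~\ref{prop:main}, but with a more careful choice of the sets $P_j$ and of the compacta $K_j$ so that the dynamics distinguishes path-connected components. First I would reduce to the case where $K := \Fill(X) = X$, since $X$ is already a full continuum (so $X$ is compact, connected, and full). As before, choose a nested sequence $(K_j)$ of compact full sets, each bounded by finitely many Jordan curves, with $K_j \subset \interior(K_{j-1})$ and $\bigcap_j K_j = X$, normalised so that $K_0 \subset \DD$. The key new idea is in the choice of $P_j \subseteq \partial K_j$: rather than a finite $2^{-j}$-dense set, I would take $P_j$ to be the full topological boundary $\partial K_j$ itself, which is a non-separating compact set (a finite disjoint union of Jordan curves), and still $2^{-j}$-close to $\partial X$ in the Hausdorff sense when $j$ is large. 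Running the construction of Proposition~\ref{prop:main} verbatim with this larger $P_j$ — note the proof only used that $P_j$ is compact, full (non-separating), and contained in $\partial K_j$, together with $Q_j = f_j^j(P_j)$ being disjoint from $\partial D_j$ and $f_j^j(L_j)$ — yields an entire function $f$ with: $f(\overline{D_{-1}}) \subseteq D_{-1}$; $f^{j+1}(\partial K_j) \subset D_{-1}$ for all $j \geq 0$; and $f^j$ injective on $K_j$ with $f^j(K_j) \subseteq D_j$, whence $f^j|_X \to \infty$ uniformly.

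Next I would extract the dynamical consequences. Since $X \subseteq K_j$ for all $j$ and $f^j(K_j) \subseteq D_j = \D(3j,1)$, the iterates escape uniformly on $X$; in particular $X \subseteq \I(f)$ and $\interior(X) \subseteq \F(f)$ by Montel. For the Julia-set statement: any $z \in \partial X$ is a Hausdorff limit of points $p_j \in \partial K_j = P_j$, and $f^k(p_j) \in D_{-1}$ for all $k > j$ (by invariance of $D_{-1}$), while $f^k(z) \to \infty$; so $(f^k)$ fails to be equicontinuous at $z$, giving $\partial X \subseteq \J(f)$. Now take a path-connected component $C$ of $X$. I claim $C$ is a path-connected component of $\I(f)$: certainly $C \subseteq X \subseteq \I(f)$, and any path in $\I(f)$ starting in $C$ must stay inside $X$, because leaving $X$ through $\partial X \subseteq \J(f)$ is impossible (the Julia set is disjoint from $\I(f)$ only where... — more precisely, $\partial X$ consists of points that do escape but any arc leaving $\interior(X)$ must cross $\partial X$; I need the arc to remain in $X$, which follows since $X$ is closed and the arc cannot pass from $X$ to $\C \setminus \overline{X}$ without meeting $\partial X$, and points of $\partial X$ lie in $X$ anyway — so the arc stays in $X$, hence in its path-component $C$). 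Therefore $C$ is a path-connected component of $\I(f)$. Similarly, a path-connected component of $\partial X$ lies in $\J(f)$ and, being a maximal connected subset of the closed totally-"non-open" set $\partial X$ which is itself relatively clopen in... — I would argue it is a connected component of $\J(f)$ by noting $\partial X$ is closed and any connected subset of $\J(f)$ meeting $\interior(X)$ is empty (as $\interior(X) \subseteq \F(f)$) while any connected subset meeting $\C \setminus \overline{X}$... here I must check the complement of $\overline{X}$ is also in $\F(f)$, which it is, since on each complementary Fatou-type domain $f$ eventually maps into $D_{-1}$ or escapes — actually $\C \setminus \overline X = U$ is the basin-like region, and the $P_j \to \partial X$ argument shows nearby behaviour; cleanly: $\J(f) \cap \overline{X} = \partial X$ would finish it, but in general $\J(f)$ could be larger, so the correct claim is only that each path-component of $\partial X$ is a \emph{connected} component of $\J(f)$, which holds because $\partial X$ is relatively open and closed in $\J(f) \cap \overline{X}$ and the relevant path-component is the whole connected piece.)

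The final clause — no point of $X$ can be joined to $\infty$ by a curve in $\I(f)$ — is then immediate: such a curve would be a path in $\I(f)$ starting in $X$, hence contained in a path-connected component $C$ of $X$; but $C \subseteq X \subseteq K_0 \subset \DD$ is bounded, so it cannot reach $\infty$. The main obstacle I anticipate is making the two "component" statements precise: controlling path-connected components of the escaping set requires knowing that an escaping curve starting in $X$ cannot escape through the Julia set $\partial X$ into the complementary domains, and controlling connected components of $\J(f)$ requires understanding $\J(f)$ near $\overline{X}$ well enough to see that each path-component of $\partial X$ is separated from the rest of $\J(f)$. Both should follow from the invariance of $D_{-1}$ and the uniform escape on $X$, together with the fact that on $\C \setminus \overline{X}$ the function $f$ coincides (in the limit) with behaviour landing in the invariant disc $D_{-1}$ — but spelling this out carefully, rather than the approximation-theoretic core, is where the real work lies.
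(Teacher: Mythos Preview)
Your proposal contains a fatal gap at the first step: you cannot take $P_j = \partial K_j$. The boundary $\partial K_j$ is a Jordan curve (or finite union thereof) and hence \emph{separates} the plane, contrary to the explicit requirement in the set-up of Proposition~\ref{prop:main} that $P_j$ be non-separating. More decisively, there is a maximum-principle obstruction that no tinkering with the Runge step can circumvent: if an entire function satisfied $f^{j+1}(\partial K_j) \subset D_{-1}$, then applying the maximum principle to $f^{j+1}(z)+3$ on the Jordan domain $\interior(K_j)$ would force $f^{j+1}(K_j) \subset \overline{D_{-1}}$ as well, which is incompatible with $f^{j+1}(K) \subset D_{j+1}$ for $K\subset K_j$. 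The paper instead removes a short open arc $\gamma_j$ from each $\partial K_j$ and sets $P_j \defeq \partial K_j \setminus \gamma_j$, which \emph{is} non-separating; the arcs are chosen with $\diam(\gamma_j)\to 0$ and accumulating alternately at two distinct points of $K$, so that no point of $K$ is accessible from $\C \setminus (K \cup \bigcup_j P_j)$.

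Your argument for the path-component statements is also off target. You suggest a path in $\I(f)$ cannot leave $X$ because it would cross $\partial X \subset \J(f)$ --- but $\partial X \subset \I(f)$ too, so membership in $\J(f)$ is no barrier, and $\I(f)$ certainly extends beyond $X$. The correct barrier is $\bigcup_j P_j$, which lies in the attracting basin of $D_{-1}$ and hence in $\F(f) \setminus \I(f)$. By the accessibility property above, any curve from $K$ to $\C\setminus K$ must meet some $P_j$; consequently no curve in $\J(f) \cup \I(f)$ connects $K$ to its complement. Both the $\I(f)$ and the $\J(f)$ conclusions drop out of this single separation statement, with none of the tangled case analysis you sketch.
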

\begin{proof}
If $X$ is a singleton, then we may consider a continuum 
 $X'$ of which $X$ is a path-connected component and continue the proof with $X'$ instead of $X$. 
 (For example, we can take $X'$ to be as shown in Figure~\ref{fig:point-path-connected-component}.)
 From now on we therefore suppose that $X$ is not a singleton.
\begin{figure}
\includegraphics[width=.7\linewidth]{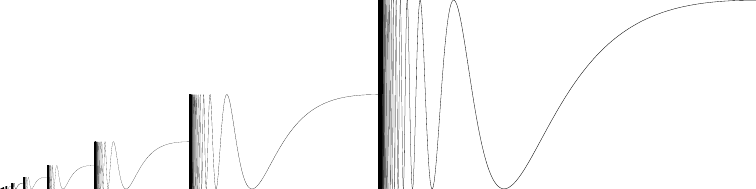} 
\caption{A continuum $X'$ having a singleton path-connected component. $X'$ is obtained as a countable union of progressively smaller copies of the $\sin(1/x)$ continuum,
 accumulating on a single point (the left-most point in the figure).}
\label{fig:point-path-connected-component}
\end{figure}

Construct a function $f$ as in Proposition~\ref{prop:main}, where the set 
  $P_j\subseteq \partial K_j$ 
  is chosen such that no point of $K$ is accessible from
    $\C\setminus (K\cup \bigcup_{j=0}^{\infty} P_j)$. 
   For example, let $z_0,z_1$ be distinct points of $\partial K$, 
   and for each $j$ choose an open arc $\gamma_j\subseteq \partial K_j$
   in such a way that $\diam(\gamma_j)\leq 2^{-j}$ for all $j$ and
   such that, furthermore, $\gamma_{2j}\to z_0$ and $\gamma_{2j+1}\to z_1$ as $j\to\infty$. 
   Then the sets $P_j\defeq \partial K_j\setminus \gamma_j$ have the desired property.
   
  Now apply Proposition~\ref{prop:main}, to obtain a function $f$ satisfying
    the conclusions of Theorem~\ref{thm:uniform}. Each $P_j$ is contained in
    an iterated preimage of the forward-invariant disc $D_{-1}$, and hence is
    disjoint from $\J(f)\cup \I(f)$. By the choice of $P_j$, this implies that 
    there is no curve in $\J(f)\cup \I(f)$ connecting a point of $\C\setminus K$ to
    a point of $K$. Since $K\subseteq \I(f)$, every path-connected component of $K$
    is a path-connected component of $\I(f)$; similarly, since $\partial K = K\cap \J(f)$, 
    every path-connected component of $\partial K$ is a path-connected component
    of $\J(f)$. 
\end{proof}
\begin{rmk}
  As in Proposition~\ref{prop:fastescaping}, we can again choose $f$ so that $X\subseteq A(f)$.
\end{rmk}

\section{Scaffolding}

\label{sec:scaffolding}

The functions whose existence is asserted in Theorems~\ref{thm:eremenkocounterexample} and~\ref{thm:main} will both be constructed using a 
  sequence of unbounded horizontal strips $(S_j)_{j=0}^{\infty}$ and $(T_j)_{j=0}^{\infty}$, such that $S_j$ is mapped
  univalently over $S_{j+1}$ and $T_{j+1}$. (See Figure~\ref{fig:strips}.) 
  Our wandering set $K$ 
  starts out in the strip $T_0$, maps into $S_0$ and from there into $T_1$, then back
  into $S_0$ and on to $T_2$ via $S_1$, and so on. The crucial step
  happens at the time     
    \begin{equation}\label{eqn:Nj} N_j \defeq \sum_{\ell =1}^j (\ell+1) = \frac{j\cdot (j+3)}{2}, \end{equation}  
    when 
  $f^{N_j}(K)$ is in $T_j$, and is mapped back inside $S_0$ by $f$. (By convention, $N_0\defeq 0$.) 

\begin{figure}
\begin{center}
\def\svgwidth{.9\textwidth}
\input{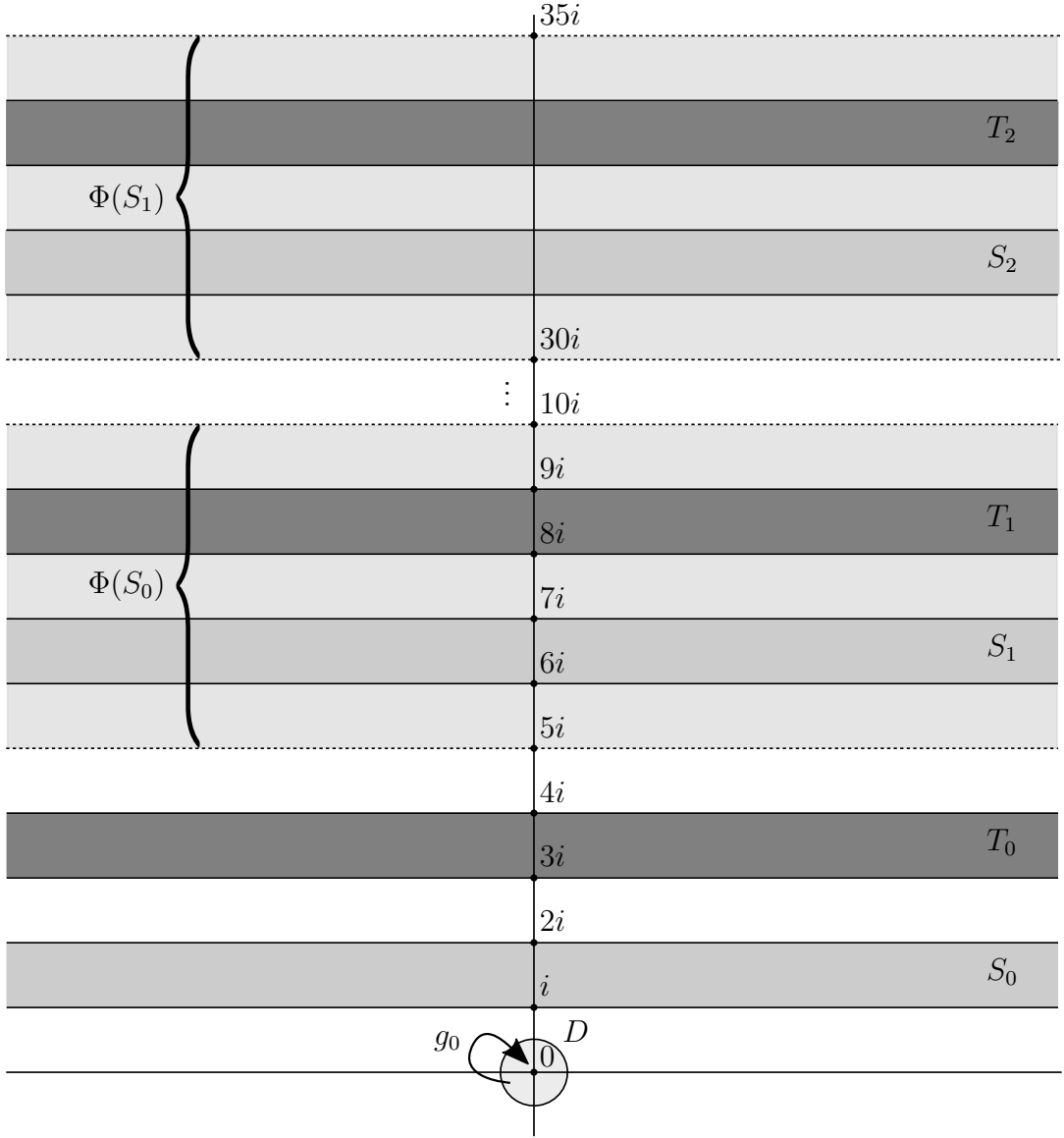}
\end{center}
\vspace*{-5pt}
 \caption{\label{fig:strips}The strips $S_k$ and $T_k$ used in the construction.}
\end{figure}

Similarly as in the proof of Theorem~\ref{thm:uniform},
  $f$ is the limit of a sequence of functions $(f_j)_{j=0}^{\infty}$ constructed using approximation theory. 
  We have to ensure that all these functions share the 
  mapping behaviour on the strips $(S_j)$ described above. This can be achieved
  using Lemma~\ref{lem:approx1} together with Arakelyan's theorem. 
  
  To provide the details of this set-up, consider the affine map
   \[  \Phi\colon\C\to\C; \quad z\mapsto 5z. \]
   For $j\geq 0$, set
\begin{align*}
 S_j &\defeq \big\{z\in\C\colon   5^{j+1}-1 < 4 \im z  < 5^{j+1}+3\big\}, \text{ and}\\
T_j &\defeq S_j + 2i = \big\{z\in \C\colon 5^{j+1} + 7 < 4 \im z < 5^{j+1}+11 \big\}.
\end{align*}

Observe that
  \begin{equation}\label{eqn:stripimage}
  \Phi(S_j) = \big\{ z\in \C\colon 5^{j+2}-5 < 4 \im z < 5^{j+2}+15 \big\} \supseteq S_{j+1} \cup T_{j+1} \end{equation}
for $j\geq 0$. (Compare Figure~\ref{fig:strips}.) Define $S\defeq \bigcup_{j=0}^{\infty} S_j$. 

\begin{lem}\label{lem:strips}
  There is an $\eps >0$ with the following property. Suppose that 
    $f\colon S\to\C$ is analytic and $\lvert f(z) - \Phi(z)\rvert \leq \eps$ for all $z\in S$. 
 
  Then $\lvert \re f(z)\rvert  > 4\lvert \re z\rvert$ for all $z\in S$ with $\lvert \re z\rvert \geq 1$. Furthermore, for every $j\geq 1$, there is a domain $V_j = V_j(f)\subseteq S_0$ such that 
  $f^j\colon V_j \to T_j$ is a conformal isomorphism and
  $f^k(V_j)\subseteq S_k$ for $k=0,\dots,j-1$. Moreover, $\textup{Re}\,z$ is unbounded from above and below on $V_j$, and $2\leq \lvert f'\rvert\leq 8$ 
  on $\bigcup_{k=0}^{j-1} f^k(V_j)$. \end{lem}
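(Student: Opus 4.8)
The plan is to deduce everything from the behaviour of the linear model $\Phi(z)=5z$ and transfer it to $f$ by means of Lemma~\ref{lem:approx1}.

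\emph{The linear estimate and the behaviour at $\pm\infty$.} Writing $f=\Phi+(f-\Phi)$ we get $|\re f(z)-5\re z|\le|f(z)-\Phi(z)|\le\eps$ on $S$, so $|\re f(z)|\ge 5|\re z|-\eps$; if $\eps<1$ this exceeds $4|\re z|$, and has the same sign as $\re z$, whenever $|\re z|\ge 1$. The same inequality forces $\re f(z)\to\pm\infty$ as $\re z\to\pm\infty$ in $S$, at a rate independent of the point since $\eps$ bounds $|f-\Phi|$ on all of $S$; this gives the continuous extension to $S\cup\{\pm\infty\}$ fixing $\pm\infty$.

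\emph{Model domains and uniform transfer.} Since $\Phi(z)=5z$ and, by~\eqref{eqn:stripimage}, each of $S_{k+1}$ and $T_{k+1}$ lies inside $\Phi(S_k)$ at vertical distance $1$ from $\partial\Phi(S_k)$ --- a quantity with \emph{no} dependence on $k$ --- the sets
\[
  \widehat V_j^{(k)}\defeq\Phi^{-(j-k)}(T_j)=5^{k-j}\,T_j\qquad(0\le k\le j)
\]
are horizontal strips with $\widehat V_j^{(j)}=T_j$, $\Phi(\widehat V_j^{(k)})=\widehat V_j^{(k+1)}$, $\widehat V_j^{(k)}\subseteq S_k$ for $k\le j-1$, and --- the point of the construction --- $\dist\bigl(\overline{\widehat V_j^{(k)}},\partial S_k\bigr)\ge 1/5$ for all such $k$, uniformly in $j$ and $k$. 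Let $N_j^{(k)}$ be the open $1/10$-neighbourhood of $\widehat V_j^{(k)}$, so $\overline{N_j^{(k)}}\subset S_k$ with $\dist(\overline{N_j^{(k)}},\partial S_k)\ge 1/10$, again uniformly. I would apply Lemma~\ref{lem:approx1} with $\phi=\Phi|_{S_k}$ and $A=\overline{N_j^{(k)}}$, for which $\inf_A|\phi'|=5$ and $\dist(A,\partial S_k)\ge 1/10$ are uniform and $|\phi'|\equiv 5$ is bounded above; since the threshold $\eps$ in its proof depends only on $\dist(A,\partial U)$ and $\inf_A|\phi'|$, this yields a \emph{single} $\eps>0$ --- which we also require to be below $1$ and below a suitable universal constant --- such that, for any $f$ with $|f-\Phi|\le\eps$ on $S$, each $f|_{N_j^{(k)}}$ is injective, maps $N_j^{(k)}$ into $\Phi(S_k)$, and satisfies $5/2<|f'|<6$ there. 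Moreover, comparing $\partial\bigl(f(N_j^{(k)})\bigr)$ with $f(\partial N_j^{(k)})$ --- using the open-mapping property of $f|_{N_j^{(k)}}$ together with the linear estimate to stop boundary sequences from escaping to $\pm\infty$ --- and noting that $\Phi(\partial N_j^{(k)})$ lies at distance $1/2$ from $\widehat V_j^{(k+1)}$ while $|f-\Phi|\le\eps$, one obtains that $f(N_j^{(k)})$ contains the $(1/2-\eps)$-neighbourhood of $\widehat V_j^{(k+1)}$; in particular it contains $T_j$ when $k=j-1$.

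\emph{Construction of $V_j$ and verification.} Fix $j\ge 1$, set $\widetilde V_j^{(j)}\defeq T_j$, and define downwards $\widetilde V_j^{(k)}\defeq\bigl(f|_{N_j^{(k)}}\bigr)^{-1}\bigl(\widetilde V_j^{(k+1)}\bigr)$. By induction $\widetilde V_j^{(k)}$ is a domain contained in the $\eps$-neighbourhood of $\widehat V_j^{(k)}$: if $f(z)$ lies within $\eps$ of $\widehat V_j^{(k+1)}$, then $5z$ lies within $2\eps$ of it, so $z$ lies within $2\eps/5<\eps$ of $\Phi^{-1}(\widehat V_j^{(k+1)})=\widehat V_j^{(k)}$; this neighbourhood lies well inside $N_j^{(k)}$ and, by the previous step, inside $f(N_j^{(k-1)})$, so the construction continues. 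Put $V_j\defeq\widetilde V_j^{(0)}\subseteq S_0$. Each $f|_{\widetilde V_j^{(k)}}\colon\widetilde V_j^{(k)}\to\widetilde V_j^{(k+1)}$ is a conformal isomorphism, so $f^j\colon V_j\to T_j$ is a conformal isomorphism with $f^k(V_j)=\widetilde V_j^{(k)}\subseteq S_k$ for $0\le k\le j-1$, and since each $\widetilde V_j^{(k)}\subseteq N_j^{(k)}$ we get $5/2<|f'|<6$ on $\bigcup_{k=0}^{j-1}f^k(V_j)$. Finally, iterating $|\re f(w)-5\re w|\le\eps$ along an orbit $z,f(z),\dots,f^{j-1}(z)$ lying in $S$ gives $|\re z-5^{-j}\re f^j(z)|<\eps/4$ for $z\in V_j$; since $f^j(V_j)=T_j$ is unbounded in $\re$ in both directions, so is $V_j$.

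\emph{The main obstacle.} The delicate point --- and the reason one cannot simply invoke Corollary~\ref{cor:approx} --- is that the strips $S_k$ are unbounded, so the approximation error $\eps$ must be chosen uniformly over the entire infinite scaffolding; this is exactly what the self-similar placement of the $S_k,T_k$ provides, through the $j$- and $k$-independent bound $\dist(\widehat V_j^{(k)},\partial S_k)\ge 1/5$, and it is why one must read off the quantitative dependence of the threshold in Lemma~\ref{lem:approx1}. A secondary subtlety is that $f^j$ must be an isomorphism \emph{onto} $T_j$, not merely an approximate one; this is handled by passing to the slightly larger strips $N_j^{(k)}$ and extracting $V_j$ as a genuine preimage rather than approximating $\widehat V_j^{(0)}$ directly.
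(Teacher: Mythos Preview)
Your proof is correct and follows essentially the same approach as the paper's: transfer the linear model $\Phi(z)=5z$ to $f$ via Lemma~\ref{lem:approx1}, using the self-similar placement of the strips to obtain a single $\eps$ that works at every level, and then pull back $T_j$ step by step to define $V_j$. The paper streamlines one point: rather than introducing the $j$-dependent sets $N_j^{(k)}$ and arguing that the threshold in Lemma~\ref{lem:approx1} is uniform across all $(j,k)$, it applies Lemma~\ref{lem:approx1} \emph{once} with the fixed set $A=\bigcup_{k\ge 0}\tilde S_k$, where $\tilde S_k=\{z\in S_k:\dist(z,\partial S_k)\ge 1/10\}$; since $\Phi(\tilde S_k)\supset S_{k+1}\cup T_{k+1}$ for every $k$, this immediately gives injectivity of $f$ on each $\tilde S_k$ with $f(\tilde S_k)\supset S_{k+1}\cup T_{k+1}$, and the pull-backs $V_j^{(k)}=(f|_{\tilde S_k})^{-1}(V_j^{(k+1)})$ can be taken inside these fixed sets. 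Your more explicit treatment of the ``onto'' step and of the unboundedness of $\re z$ on $V_j$ (via iterating $|\re f(w)-5\re w|\le\eps$) fills in details the paper leaves implicit.
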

\begin{proof}
  For $k\geq 0$, define 
     \begin{align*} \tilde{S}_k &\defeq \big\{z\in S_k\colon \dist(z,\partial S_k) \geq 1/10  \big\}  \\
           &=
             \big\{z\in\C\colon 5^{k+1}- 3/5 \leq   4\im z \leq 5^{k+1}+ 13/5 \big\}  \\ &=
             \Phi^{-1}\big(\big\{z\in\C\colon 5^{k+2} - 3 \leq 4\im z \leq 5^{k+2} + 13\big\}\big). \end{align*}
    Set $U\defeq S$, $V\defeq \Phi(U)$ and $A\defeq \bigcup_{k=0}^{\infty} \tilde{S}_k$. 
       Then the hypotheses of
    Lemma~\ref{lem:approx1} are satisfied. Let $\eps$ be as given in that lemma, chosen additionally such that
    $\eps<1/2$, and let $f$ be $\eps$-close to $\Phi$ on $U$. Then 
  \[ \lvert \re f(z)\rvert \geq \lvert \re \Phi(z)\rvert - \frac{1}{2} = 5\lvert \re z\rvert - \frac{1}{2} > 4\lvert \re z\rvert \] 
    for all $z\in S$ with $\re z \geq 1$.  
    
    Moreover, $f$ is injective on each $\tilde{S}_k$ 
      with $f(\tilde{S}_k)\supseteq S_{k+1}\cup T_{k+1}$. So we may define
      inductively $V_j^j = T_j$ and, for $k=0,\dots,j-1$, 
         \[ V_j^{k} \defeq (f\vert_{\tilde{S}_{k}})^{-1}(V_j^{k+1}). \]
      Then $V_j \defeq V_j^0$ has the desired properties.       
    Finally, $\Phi'(z)=5$ for all $z\in \C$, which implies the final claim by 
       Lemma~\ref{lem:approx1} (observe that $\eta = \inf_{z\in A}\lvert \Phi'(z)\rvert=5$).
\end{proof}

Define 
   \[ D \defeq \D\Bigl(0,\frac{1}{2}\Bigr) \]
  and observe that $\overline{D}\cap \overline{S}=\emptyset$. 
  The disc $D$ will play the role of the disc $D_{-1}$ from
Section~\ref{sec:uniform}. We define $A_0 \defeq \overline{D}\cup \overline{S}$ and 
     \[ g_0\colon A_0 \to\C; \quad z\mapsto \begin{cases}
                            \Phi(z), &\text{if $z\in\overline{S},$} \\
                            0, &\text{if $z\in\overline{D}.$}\end{cases} \]
  Set
     \[ \eps_0 \defeq \min\Big\{\frac{\eps}{2}, \frac{1}{5}\Big\}, \] 
  where $\eps$ is the number from Lemma~\ref{lem:strips}. By the discussion in Section~\ref{sec:preliminaries}, 
    $A_0$ satisfies the hypotheses of Arakelyan's theorem.  Hence there exists an entire function $f_0$ such that 
     \begin{equation} 
         \lvert f_0(z) - g_0(z)\rvert \leq \eps_0\quad \textup{for } z\in A_0. \end{equation}
  The set $A_0$ and the function $f_0$ will remain fixed with these properties throughout the following sections. 
    Any entire function $f$ that is $\eps_0$-close to $f_0$ (and hence $\eps$-close to $g_0$) 
    on $A_0$  satisfies the hypothesis 
    and hence the conclusion of Lemma~\ref{lem:strips}, and also 
   \begin{equation}\label{eqn:Dinvariant} f(\overline{D})\subseteq D. \end{equation}

\section{Entire functions with wandering compacta}\label{sec:maverickconstruction}
 We now prove Theorem~\ref{thm:main}.     Let $K$, $Z_{\I}$ and $Z_{\BU}$ be as in 
    Theorem~\ref{thm:main}. Without loss of generality, suppose that $K\subseteq T_0$, where $T_0$ is the strip from the previous section.
      We may also assume that $Z_{\BU}\neq \emptyset$, since 
      the case $Z_{\BU} = \emptyset$ is covered already by 
      Theorem~\ref{thm:uniform}. (Alternatively, replace $K$ by $K\cup \{\zeta\}$, where $\zeta\notin K$, and set $Z_{\BU}\defeq \{\zeta\}$.)
      We further assume that no component of $\interior(K)$ contains more than one point of $Z\defeq Z_{\I}\cup Z_{\BU}$. Indeed, any Fatou component
      that intersects $\I(f)$ is contained in $I(f)$, and likewise for $\BU(f)$, so any additional points in the same component of $\interior(K)$ can be omitted without affecting
      the conclusion of the theorem.
      Let $(Z_j)_{j=0}^{\infty}$ be a sequence of finite subsets of $Z$ with
        $Z_{j}\subseteq Z_{j+1}$ and 
      such that $Z = \bigcup_{j=0}^\infty Z_j$. 
      
 \begin{rmk}
   We may always
      augment $Z$ to be infinite. However, it turns out that the proof is
      slightly simpler in the case where $\# Z = 2$, which is the case required for the proof of
      Theorem~\ref{thm:rippon}. So we allow $Z$ to be either finite or
      infinite, and discuss those instances where the assumption $\# Z  =2$ leads to
      simplifications. 
 \end{rmk}
      Let $(\zeta_j)_{j=0}^{\infty}$ be a sequence in $Z_{\BU}$ such that 
        for every $\zeta\in Z_{\BU}$, there are infinitely many $j\geq 0$ such that
        $\zeta_j = \zeta$. 
    
Again choose a sequence $(K_m)_{m=0}^{\infty}$ of compact and full subsets 
      as in Lemma~\ref{lem:Kn}, as well as non-separating and $2^{-m}$-dense subsets
       $P_m\subseteq \partial K_m$. We may choose these sets such that $K_0\subseteq T_0$. 
       Our goal is to prove the following version of Proposition~\ref{prop:main}. 
       Here and in the following, we retain all notation from Section~\ref{sec:scaffolding}, including the disc $D$, the functions $g_0$ and $f_0$, the set $A_0$ 
        and the number $\eps_0$. 
       
       \begin{prop}\label{prop:mainmaverick}
   Let $(N_j)_{j=0}^\infty$ be defined as in~\eqref{eqn:Nj}: $N_0=0$ and $N_{j}=N_{j-1}+(j+1)$ for $j\geq 1$. 
  There is a transcendental entire function~$f$ and an increasing 
   sequence $(m_j)_{j=0}^{\infty}$
     with the following properties:
  \begin{enumerate}[(a)]
    \item $\lvert f(z) - g_0(z)\rvert \leq 2\eps_0$ for $z\in A_0$; in particular, $f(\overline{D})\subseteq D$ and the domains
       $V_{j}(f)$ from Lemma~\ref{lem:strips} are defined for all $j\geq 1$;\label{item:Dinvariant}
    \item $f^{N_j+1}(P_{m_{j}})\subseteq D$ for all $j\geq 0$; \label{item:Pmimage}
    \item $f^{N_j+1}$ is injective on $K_{m_{j+1}}$ for all $j\geq 0$, with
          $f^{N_j+1}(K_{m_{j+1}})\subseteq V_{j+1}(f)$;\label{item:injectiveKm}
    \item $\lvert \re f^{N_j+1}(\zeta_{j})\rvert \leq 1$ for all $j\geq 0$;\label{item:zetajimage}
    \item if $N_j+1\leq n \leq N_{j+1}$ and $\zeta \in Z_j\setminus \{\zeta_j\}$, 
         $\lvert \re f^{n}(\zeta)\rvert \geq j$.\label{item:prescribedbehaviour} 
  \end{enumerate}
\end{prop}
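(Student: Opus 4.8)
The plan is to reprise the inductive scheme from the proof of Proposition~\ref{prop:main}, but now over the scaffolding of Section~\ref{sec:scaffolding}: $f$ will be the locally uniform limit of entire functions $f_j$, each produced by Arakelyan's theorem (Theorem~\ref{thm:arakelyan}) in place of Runge's, and each required to coincide with $\Phi$ on $\overline S$ so that Lemma~\ref{lem:strips} applies to every $f_j$ and to $f$. Along the way I would also build tolerances $\eps_0\ge\eps_1\ge\cdots$ with $\eps_{j+1}\le\eps_j/2$ and integers $m_0<m_1<\cdots$. The inductive hypotheses — modelled on (i)--(iii) in the proof of Proposition~\ref{prop:main} — would record, for all $k\le j$, that $f_j$ is $\eps$-close to $\Phi$ on $S$, that $f_j^{N_k}$ is injective on $K_{m_k}$ with $f_j^{N_k}(K_{m_k})\subset T_k$, that $f_j^{N_{k-1}+1}(K_{m_k})\subset V_k(f_j)$ (so the orbit of $K_{m_k}$ flows $T_0\to S_0\to T_1\to S_1\to T_2\to\cdots$ as in Figure~\ref{fig:strips}), that $f_j^{N_{k-1}+1}(P_{m_{k-1}})\subset D$, and that (d)--(e) already hold for rounds below $j$. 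One anchors the induction with $f_0,\eps_0$ from Section~\ref{sec:scaffolding} (after an affine adjustment so that $K_{m_0}\subset T_0$), using $N_0=0$.

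The step $f_j\rightsquigarrow f_{j+1}$ would realise the ``big move'' at time $N_j$. First I would fix $m_{j+1}>m_j$ (pinned down below) and, by Lemma~\ref{lem:Kn}, a full compact set $L_j$ with $K_{m_{j+1}}\subset\interior(L_j)\subset L_j\subset\interior(K_{m_j})$ and $L_j\cap P_{m_j}=\emptyset$, and let $R_j$ be a small full neighbourhood of $f_j^{N_j}(P_{m_j})$ inside $T_j$, disjoint from $f_j^{N_j}(L_j)$. I would then set
\[
 A_{j+1}\defeq\overline D\cup\overline S\cup R_j\cup\bigcup_{k=0}^{j}f_j^{N_k}(L_j),
\]
a disjoint union of $\overline D$, the comb of strips $\overline S$, and finitely many compact pieces lying in the gaps between strips, so that $\CR\setminus A_{j+1}$ is connected and — as for $A_0$ in Section~\ref{sec:scaffolding} — locally connected at $\infty$; and define $g_{j+1}$ on $A_{j+1}$ by $g_{j+1}=0$ on $\overline D\cup R_j$, $g_{j+1}=\Phi$ on $\overline S$, $g_{j+1}=f_j$ on $f_j^{N_k}(L_j)$ for $0\le k<j$ (so that the first $N_j$ iterates of $K_{m_{j+1}}$ under $g_{j+1}$ copy those under $f_j$), and $g_{j+1}=\psi_j$ on $f_j^{N_j}(L_j)\subset T_j$, where $\psi_j$ is a carefully chosen injective holomorphic map with image a compact subset of the \emph{interior} of the thin strip $V_{j+1}(\Phi)=\Phi^{-(j+1)}(T_{j+1})\cap S_0$. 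As the pieces of $A_{j+1}$ are separated, $g_{j+1}$ extends holomorphically near $A_{j+1}$; then I would pick an entire $f_{j+1}$ with $|f_{j+1}-g_{j+1}|\le\eps_{j+1}$ on $A_{j+1}$, with $\eps_{j+1}\le\eps_j/2$ chosen, via Lemmas~\ref{lem:approx1},~\ref{lem:approx2} and Corollary~\ref{cor:approx}, so small that every entire function $2\eps_{j+1}$-close to $g_{j+1}$ on $A_{j+1}$ meets the stage-$(j+1)$ hypotheses. (The fact that $\psi_j$ placed its image \emph{compactly} inside $V_{j+1}(\Phi)$ is what lets a small perturbation keep $f^{N_j+1}(K_{m_{j+1}})$ inside $V_{j+1}(f)$, the latter being close to $V_{j+1}(\Phi)$ on the bounded region in play because $f$ is $\eps$-close to $\Phi$ on $S$.) Since $\sum\eps_j<\infty$ and $\bigcup_k A_k=\C$, the $f_j$ converge to an entire $f$ that is $2\eps_k$-close to $g_k$ on each $A_k$; in particular $f$ is $\eps$-close to $\Phi$ on $S$, $f(\overline D)\subset D$, and tracking the orbit of $K_{m_{j+1}}$ through the strips with Lemma~\ref{lem:strips} and Corollary~\ref{cor:approx} yields (a)--(e).

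I expect the construction of $\psi_j$, together with the correlated choice of $m_{j+1}$, to be the main obstacle. What $\psi_j$ must achieve: injectivity on $f_j^{N_j}(K_{m_{j+1}})$ (for (c)); flattening $f_j^{N_j}(K_{m_{j+1}})$ into the horizontal strip $V_{j+1}(\Phi)$, whose width is of order $5^{-j}$, so that the ensuing $j+1$ applications of $\Phi$ carry it conformally into $T_{j+1}$; sending $f_j^{N_j}(\zeta_j)$ to a point of real part $\le\tfrac12$ (for (d)); and sending $f_j^{N_j}(\zeta)$, for each $\zeta\in Z_j\setminus\{\zeta_j\}$, to a point of real part $\ge2j$, which is then preserved and amplified by $|\re f(z)|>4|\re z|$ (Lemma~\ref{lem:strips}) through $N_j+1\le n\le N_{j+1}$, giving (e). The delicate requirement is the last two together: being conformal onto its image, $\psi_j$ cannot change the hyperbolic distance between $\psi_j(f_j^{N_j}(\zeta_j))$ and $\psi_j(f_j^{N_j}(\zeta))$, which equals the hyperbolic distance between $\zeta_j$ and $\zeta$ in the relevant component of $\interior(L_j)$; yet in a strip of width $\asymp5^{-j}$ a change of $2j$ in real part already costs hyperbolic distance $\asymp j\,5^{j}$. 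This is exactly where the standing assumption that no component of $\interior(K)$ contains two points of $Z$ is used: $\zeta_j$ and $\zeta$ lie in distinct components of $\interior(K)$, so every arc joining them in $\interior(L_j)$ must cross $\partial K$, where $\interior(L_j)$ is pressed against its own boundary once $L_j$ is close to $K$, whence $\dist_{\interior(L_j)}(\zeta_j,\zeta)\to\infty$ as $L_j\searrow K$. So it is enough to choose $m_{j+1}$ — hence $L_j$ and its margin of closeness to $K$ — large enough that $\dist_{\interior(L_j)}(\zeta_j,\zeta)$ beats the threshold imposed by the strip geometry for every $\zeta$ in the finite set $Z_j\setminus\{\zeta_j\}$, and simultaneously large enough that $f_j^{N_j}(K_{m_{j+1}})$ and its $\psi_j$-image fit with room to spare (for the perturbation bookkeeping) and that $2^{-m_j}$-density forces the sets $P_{m_j}$ to accumulate on all of $\partial K$. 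Given this much room, $\psi_j$ can be assembled componentwise: on each component of $\interior(L_j)$ take a Riemann map onto a long thin subdomain of $V_{j+1}(\Phi)$, the subdomains pairwise disjoint, positioning the one through $\zeta_j$ so that $\zeta_j$ goes near the imaginary axis while any further point of $Z_j$ it carries is pushed out to real part $\ge2j$, and placing every remaining subdomain inside $\{|\re|\ge2j\}$. Checking (a)--(e) for $f$ then reduces, as in Proposition~\ref{prop:main}, to following $f^n(K_{m_{j+1}})$ through the strips $T_k,S_k$ and invoking Corollary~\ref{cor:approx}.
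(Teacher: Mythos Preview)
Your overall architecture is right, and you have correctly located the crux in the hyperbolic-distance estimate. Two points, however, need repair.

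First, the sets $A_{j+1}=\overline D\cup\overline S\cup R_j\cup\bigcup_{k\le j}f_j^{N_k}(L_j)$ do \emph{not} exhaust $\C$: they are contained in $\overline D\cup\overline S$ together with finitely many compacta in the $T_k$, so a point such as $3+3i$ (in the gap between $S_0$ and $T_0$, and in no $T_k$) lies in no $A_k$ and you have no control over $f_{j+1}-f_j$ there. Hence $(f_j)$ need not converge locally uniformly, and you never obtain an entire limit. The paper fixes this exactly as in Proposition~\ref{prop:main}, by including in $A_{j+1}$ the growing band $\Sigma_j=\overline S\cup\{z:\lvert 4\im z\rvert\le 5^{j+1}+3\}$ and setting $g_{j+1}=f_j$ on all of $\Sigma_j$ (not merely on the orbit pieces). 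Then $\Sigma_{j-1}\subset A_j\subset\Sigma_j$ with $\bigcup\Sigma_j=\C$, and incidentally your pieces $f_j^{N_k}(L_j)$ for $k<j$ are absorbed since $T_k\subset\Sigma_j$.

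Second, your assembly of $\psi_j$ on the component $U$ of $\interior(L_j)$ containing $\zeta_j$ is not yet a proof when several $\omega\in Z_j\setminus\{\zeta_j\}$ lie in $U$. A Riemann map from $U$ onto a prescribed ``long thin subdomain'' has only three real degrees of freedom; once $\zeta_j$ is placed and a rotation fixed, each $\omega$ lands where it lands, and large hyperbolic distance from $\zeta_j$ does not by itself force large \emph{real part}: $\omega$ could map near a side of the target strip, Euclidean-close to $\psi_j(\zeta_j)$. The paper supplies the missing device, Proposition~\ref{prop:spikes}. One first uniformises $U$ to $\DD$ via $\pi_m^{\zeta_j}$ with $\zeta_j\mapsto 0$; by Lemma~\ref{lem:hyperbolicdistance} and after passing to a subsequence of the $K_m$, each $\pi_m^{\zeta_j}(\omega)$ converges to a point $\xi^{\zeta_j}(\omega)\in\partial\DD$. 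Proposition~\ref{prop:spikes} then produces a Jordan domain $W_j\subset T_{j+1}$ with \emph{spikes} reaching far to the right precisely at the finite set $\Xi=\{\xi^{\zeta_j}(\omega)\}$, together with a conformal $\phi\colon\DD\to W_j$ sending $0$ to a prescribed point $s$ and each $\xi\in\Xi$ to real part greater than $R$. Taking $m_{j+1}$ large then gives $\re\phi(\pi_{m_{j+1}-1}^{\zeta_j}(\omega))>R$ as well, and the map you want on $f_j^{N_j}(\overline U)$ is $(f_j^{j+1}|_{V_{j+1}(f_j)})^{-1}\circ\phi\circ\pi_{m_{j+1}-1}^{\zeta_j}\circ(f_j^{N_j})^{-1}$. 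On the remaining components of $L_j$ an affine map into $T_{j+1}\setminus\overline{W_j}$, followed by the same pullback, suffices.
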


Most of the remainder of the section is dedicated to the proof of Proposition~\ref{prop:mainmaverick}. However, we first show how this
   proposition implies Theorems~\ref{thm:rippon} and~\ref{thm:main}.

\begin{proof}[Proof of Theorem~\ref{thm:main}, using Proposition~\ref{prop:mainmaverick}]
   Similarly as in the proof of Theorem~\ref{thm:uniform}, all points in $P_{m_j}$ 
    eventually map to $\overline{D}$ and remain there by~\ref{item:Dinvariant} and~\ref{item:Pmimage},
    while by~\ref{item:injectiveKm}, 
    \[ f^{N_{j+1}}(K) = f^{N_j+j+2}(K)\subseteq f^{j+1}(V_{j+1}(f)) = T_{j+1} \]
   for all $j\geq 0$, and hence all points in $K$ have unbounded orbits. So $\partial K \subseteq \J(f)$,
   proving~\ref{item:boundaryinJ}.
   
   Furthermore, if $N_j +1 \leq n < N_{j+1}$, then 
    \[ f^n(K)\subseteq f^{n-N_j-1}(V_{j+1}(f)) \subseteq S_{n-N_j-1}, \]
 and, in particular, $f^n(K)\cap T_{j+1}=\emptyset$ for $n<N_{j+1}$. Hence, $K$ is a wandering compactum, 
 and~\ref{item:wanderingcompactum} holds. Together with~\ref{item:boundaryinJ}, this also shows that
  every connected component of $\interior(K)$ is a wandering domain. 

 It remains to prove~\ref{item:maverick}. First, let $\zeta \in Z_{\BU}$. 
   As noted above, all points in $K$ have unbounded orbits. On the other hand, there are infinitely
   many $j\geq 0$ such that $\zeta_j = \zeta$, and hence 
   $\lvert\re f^{N_j+1}(\zeta)\rvert \leq 1$ by~\ref{item:zetajimage}. Furthermore, by~\ref{item:injectiveKm}, 
      $f^{N_j+1}(\zeta)\in V_{j+1}\subseteq S_0$ and hence $1 \leq \im f^{N_j+1}(\zeta) \leq 2$. 
    So $\liminf_{n\to\infty} \lvert f^{n}(\zeta) \rvert \leq 3$, and $\zeta\in \BU(f)$ as required.

On the other hand, let $\zeta\in Z_{\I}$. Let $j_0\geq 0$ be large enough that $\zeta\in Z_j$
    for $j\geq j_0$. If $n\geq N_{j_0}+1$, let $j\geq j_0$ be maximal
    with $n \geq N_j+1$. Then 
   $\lvert\re f^{n}(\zeta)\rvert \geq j$ by~\ref{item:prescribedbehaviour}. Hence $\lvert\re f^{n}(\zeta)\rvert\to+\infty$ as $n\to\infty$, and $\zeta\in \I(f)$.
\end{proof} 
\begin{rmk}\label{rmk:BUmaverick}
  Let $\zeta \in Z_{\BU}$, and $\zeta' \in Z\setminus \{\zeta\}$.  Then~\ref{item:zetajimage} and~\ref{item:prescribedbehaviour} show that
     \begin{equation}\label{eqn:BUmaverick}
        \limsup_{j\to\infty} \dist^{\#}(f^{N_j+1}(\zeta) , f^{N_j+1}(\zeta')) > 0. 
     \end{equation}
     
Recall that, at the beginning of the section, we simplified the set $Z=Z_{\BU}\cup Z_{\I}$ by assuming that 
   no interior component of $K$ contains more than one point
    of $Z$. If two points $z$ and $\tilde{z}$ belong to the same wandering
    Fatou component, then~$\dist^{\#}(f^n(z),f^n(\tilde{z}))\to 0$ as $n\to\infty$. 
    Hence, as noted following Definition~\ref{dfn:maverick-points},~\eqref{eqn:BUmaverick} 
    holds for the function in Theorem~\ref{thm:main} 
    as long as $\zeta$ and $\zeta'$ do not belong to the same connected component of $\interior(K)$.
\end{rmk}

\begin{proof}[Proof of Theorem~\ref{thm:rippon}, using Theorem~\ref{thm:main}]
  Let $X$ be a Lakes of Wada continuum with complementary components $U_0, U_1, U_2$ whose boundary agrees with $X$.
    We may assume that $U_0$ is the unbounded connected component of $\C\setminus X$. 
Let $K = \C\setminus U_0 = \Fill(X)$. Choose
    $\zeta_1\in U_1$ and $\zeta_2\in U_2$, and set 
   $Z_{\I} = \{\zeta_1\}$ and $Z_{\BU} = \{\zeta_2\}$. Now apply Theorem~\ref{thm:main}. 

  Then $U_1$ is an escaping wandering domain, and $U_2$ is an oscillating wandering domain. 
    By \cite[Theorems~1.1 and~1.2(a)]{rippon-stallard11} 
      (or our stronger Theorem~\ref{thm:maverick}), 
    \[ \I(f)\cap \partial U_1 = \I(f)\cap X \subseteq X \] 
   has full harmonic measure when viewed from $U_1$, while 
    \[ \partial U_2 \setminus I(f) = X \setminus I(f) \subseteq X\]
   has full harmonic measure when viewed from
   $U_2$. In particular, these sets are non-empty.
\end{proof}

   We will require the following preliminary observation
     concerning the conformal geometry of the sets $K_m$.        
      For $\zeta\in Z$ and $m\geq 0$, 
   let $U^{\zeta}_m$ denote the connected component of $\interior(K_m)$ containing $\zeta$. 
       
    \begin{lem} \label{lem:hyperbolicdistance}
       Suppose that $\omega\neq \zeta$ and $\zeta,\omega$ belong to the same connected component of $K$, but $\zeta,\omega$~do not belong to the same connected component of $\textup{int}(K)$.
        Then 
         the hyper\-bolic distance 
           $\dist_{U_m^{\zeta}}(\zeta,\omega)$ tends to $\infty$ as $m\to\infty$. 
    \end{lem}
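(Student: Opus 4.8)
The plan is to reduce the statement to a purely topological-conformal fact about nested Jordan domains shrinking to a full compactum, with no reference to the dynamics of $f$. Fix distinct points $\zeta,\omega$ lying in the same connected component $C$ of $K$ but in distinct components of $\interior(K)$ (so that in particular $C$ is not contained in any single interior component, and $C$ is either degenerate or contains boundary points of $K$ separating the two interior components). For each $m$, $U_m^{\zeta}$ is the component of $\interior(K_m)$ containing $\zeta$; since $K_m\subset \interior(K_{m-1})$ and $\bigcap_m K_m = K$, we have $\bigcap_m U_m^{\zeta} \subseteq K$, and in fact this intersection is exactly the component of $K$ containing $\zeta$, namely $C$ — here I would use that the $K_m$ are a \emph{nested} exhaustion from above by full compacta, so the components behave well under intersection. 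The point is then that for large $m$, $\zeta$ and $\omega$ lie in the \emph{same} component $U_m^{\zeta}$ of $\interior(K_m)$ (they are in the same component of $K$, hence eventually in the same component of each neighbourhood $K_m$), but that $U_m^{\zeta}$ is a simply connected domain that is being ``pinched'' near $C$ as $m\to\infty$.

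The key step is to show that $U_m^{\zeta}$ develops a thin neck separating $\zeta$ from $\omega$. Since $\zeta$ and $\omega$ lie in \emph{different} components $U^\zeta, U^\omega$ of $\interior(K)$, there is a point $p$ of $\partial K \subseteq C$ (or more carefully, a connected subset of $\partial K$) lying ``between'' them: any arc in $\interior(K)$ from a point near $\zeta$ to a point near $\omega$ is impossible, so any arc in $U_m^{\zeta}$ joining $\zeta$ to $\omega$ must pass within distance $\varepsilon_m\to 0$ of $\partial K$ at some point, where $\varepsilon_m\to 0$ because the $K_m$ shrink to $K$. More precisely, I would argue that there is a Jordan arc (a cross-cut) $\gamma_m$ in $U_m^{\zeta}$ separating $\zeta$ from $\omega$ with Euclidean diameter tending to $0$: take $\gamma_m$ to be (a subarc of) the boundary of a small disc around an appropriate separating point of $\partial K$, intersected suitably, or better, use that $\partial U^\zeta$ and $\partial U^\omega$ both meet $C$ and hence come within $2\varepsilon_m$ of each other inside $U_m^\zeta$. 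A short cross-cut $\gamma_m$ of small diameter $\delta_m\to 0$ in a simply connected domain whose two complementary pieces both have diameter bounded below (one contains $\zeta$, the other $\omega$, and these are at fixed Euclidean distance $d_0 = |\zeta-\omega| > 0$) forces, by a standard extremal-length / Grötzsch-annulus estimate, the hyperbolic distance between the two sides to be at least comparable to $\log(d_0/\delta_m) \to \infty$. Hence $\dist_{U_m^{\zeta}}(\zeta,\omega)\geq \dist_{U_m^\zeta}(\zeta,\gamma_m) \to \infty$.

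\textbf{The main obstacle} I expect is making rigorous the claim that such short separating cross-cuts $\gamma_m$ exist with diameter $\to 0$. The subtlety is topological: one needs that, because $\zeta$ and $\omega$ are in distinct components of $\interior(K)$ but the same component $C$ of $K$, the ``channel'' in $\interior(K_m)$ connecting them must get arbitrarily close to $\partial K$ and arbitrarily thin — this is where the hypothesis that the $K_m$ form the specific nested exhaustion from Lemma~\ref{lem:Kn} (boundaries finite unions of Jordan curves, $K_{m}\subset\interior(K_{m-1})$, intersection exactly $K$) is used. I would handle it by contradiction: if no such thin cross-cut existed, one could extract (by compactness, passing to a subsequence and a Hausdorff limit of connecting ``tubes'' of definite width) a continuum in $\interior(K)$ joining the components of $\zeta$ and $\omega$, contradicting that $U^\zeta\neq U^\omega$. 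Once the geometric degeneration of $U_m^\zeta$ near $C$ is established, the hyperbolic-distance lower bound is routine via monotonicity of the hyperbolic metric under inclusion together with the estimate for a doubly connected region (Grötzsch / the standard modulus estimate in \cite{garnett-marshall05}).
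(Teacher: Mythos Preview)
Your overall strategy is sound and would work, but the paper's argument is considerably more direct and sidesteps precisely the ``main obstacle'' you identify. Both approaches rest on the same geometric fact: since $\zeta$ and $\omega$ lie in different components of $\interior(K)$ (or one lies on $\partial K$), any arc $\gamma\subset U_m^\zeta$ joining them must contain a point $\alpha$ whose Euclidean distance to $\partial U_m^\zeta$ is at most some $\delta_m\to 0$ (the paper takes $\delta_m=\max_{z\in K_m}\dist(z,K)$, using that $\partial K$ separates $\zeta$ from $\omega$). Rather than promoting this to the existence of a short separating cross-cut and then invoking an extremal-length or Gr\"otzsch estimate, the paper simply integrates the standard density bound $\rho_{U_m^\zeta}(z)\geq 1/(2\dist(z,\partial U_m^\zeta))$ along the portion of $\gamma$ running from $\alpha$ to whichever of $\zeta,\omega$ lies farther away (Euclidean distance at least $|\zeta-\omega|/2$). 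Parametrising that subarc by arc length, one has $\dist(\tilde\gamma(t),\partial U_m^\zeta)\leq \delta_m+t$, and hence
\[
\ell_{U_m^\zeta}(\gamma)\;\geq\;\int_0^{|\zeta-\omega|/2}\frac{dt}{2(\delta_m+t)}\;=\;\tfrac12\log\!\Bigl(1+\tfrac{|\zeta-\omega|}{2\delta_m}\Bigr)\;\longrightarrow\;\infty.
\]
Thus the hyperbolic length of \emph{every} connecting curve is large, and no separating object needs to be constructed at all. Your compactness/contradiction argument for producing short cross-cuts can therefore be dropped entirely: the ``thin neck'' is witnessed directly by the density integral along an arbitrary curve, rather than by first manufacturing a cross-cut and then applying a modulus estimate. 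What your route would buy is a slightly more geometric picture (an explicit pinching arc), at the cost of the extra topological work you flagged; the paper's route trades that picture for a two-line computation.
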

    \begin{proof}
       Let $m\geq 0$ and set $\delta_m \defeq \max_{z\in \partial K} \dist(z,\partial K_m)$. (Note that $\delta_m\to 0$ as $m\to\infty$.) Suppose that $\gamma$ is a curve connecting $\zeta$ and $\omega$ in $U_m^\zeta=U_m^\omega$. Such a curve must contain a point $\alpha\in \partial K$.
        (This is trivial if one of the points belongs to $\partial K$, and otherwise it follows from the fact
        that $\partial K$ separates $\zeta$ and~$\omega$.) At least one of $\zeta$ or $\omega$ has distance at least $|\zeta-\omega|/2$ from~$\alpha$. Without loss of generality, we can assume that this point is $\omega$. Parametrise the subcurve $\tilde{\gamma}\subseteq \gamma$ from $\alpha$ to $\omega$ by arc-length as $\tilde{\gamma}:[0,T]\to U_m^\zeta$. Then
\[       
 T\geq |\alpha-\omega|\geq \frac{|\zeta-\omega|}{2}       \quad      
\textup{and}
\quad
\dist(\tilde{\gamma}(t),\partial U_m^\zeta)\leq \dist(\alpha,\partial U_m^{\zeta}) + t \leq \delta_m+t
\]
        for $t\in [0,T]$. By the standard estimate on the hyperbolic metric
        in a simply connected domain, the hyperbolic density of $U_m^\zeta$ at a point $z$ is bounded below
        by $1/(2 \dist(z,\partial U_m^\zeta))$. It follows that
\begin{align*}        
  \ell_{U_m^\zeta}(\gamma)\geq \ell_{U_m^\zeta}(\tilde{\gamma}) \geq \int_0^T \frac{dt}{2(\delta_m+t)}=\frac{1}{2}\ln \Big(1+\frac{T}{\delta_m}\Big)\geq \frac{1}{2} \ln\Big(1+\frac{|\zeta-\omega|}{2\delta_m}\Big),
   \end{align*}
        and hence
           \[ \dist_{U_m^\zeta}(\zeta,\omega) \geq  \frac{1}{2}\ln\Bigl(1+ \frac{\lvert \zeta-\omega\rvert}{2\delta_m}\Big) \to +\infty\quad \textup{as } m\to\infty. \qedhere \]
    \end{proof}

Recall that
       $U_m^\zeta$ is a Jordan domain; let $\pi_m^{\zeta}\colon U_m^{\zeta}\to\DD$ be a conformal map
       with $\pi_m^{\zeta}(\zeta) = 0$. 
       Let $\omega\in Z\setminus\{\zeta\}$ belong to the same
       connected component of $K$ as $\zeta$. By the assumption on $Z$ made at the beginning of this section, the points
       $\zeta$ and $\omega$ do not both belong to the same interior
       component of $K$. Hence, by Lemma~\ref{lem:hyperbolicdistance},
         $\lvert \pi_m^{\zeta}(\omega)\rvert\to 1$ as $m\to\infty$.

         Passing to a subsequence of $(K_m)_{m=0}^\infty$, we may assume that for every 
           pair of distinct points
         $\zeta,\omega\in Z$, 
         there is a point $\xi^{\zeta}(\omega)\in\partial\DD$ such that 
         $\pi_m^{\zeta}(\omega) \to \xi^{\zeta}(\omega)$ as $m\to\infty$. 

\begin{rmk}
  Suppose that $Z$ consists of exactly two points $\zeta$ and $\omega$ that both belong to the same connected component of $K$. Then we may normalise
    $\pi_m^{\zeta}$ and $\pi_m^{\omega}$ so that they map the other point to the positive
    real axis, and the above property is automatic with $\xi^{\zeta}(\omega) = \xi^{\omega}(\zeta) = 1$. 
\end{rmk}

  Recall from Proposition~\ref{prop:mainmaverick} that, at time $N_j+1$, the point
   $\zeta_{j}$ should map to a bounded part of the set $V_{j+1}(f)$, while other points of $Z_j$
   should map sufficiently far to the right. In order to achieve this, we shall use the following fact about 
  conformal mappings.  

\begin{figure}
\begin{center}
\def\svgwidth{\textwidth}
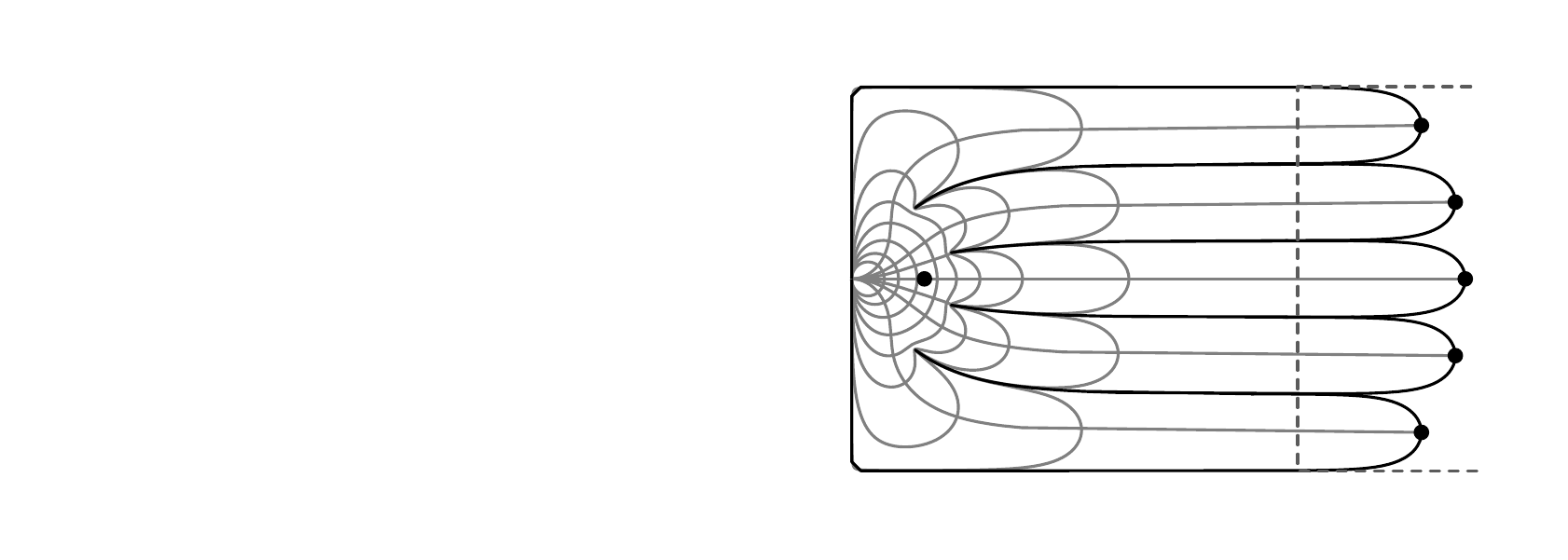 
\end{center}
\vspace*{-20pt}
\caption{Illustration of Proposition~\ref{prop:spikes}. Here 
 $V$ is a half-strip, $\nu\in V$ and $\Delta\subseteq V$ is a further half-strip, far to the 
 right of $\nu$. The set $\Xi\subseteq\partial\DD$ consists of  five points, which are mapped into $\Delta$ by $\phi\colon \DD\to W\subseteq V$.}\label{fig:confspikes}
\end{figure}

\begin{prop}[Conformal maps with prescribed behaviour]\label{prop:spikes}
   Let $V\subseteq\C$ be a domain, let $\Delta \subseteq V$ be non-empty and open, and let $\nu\in V$.  
     Let $\Xi\subseteq \partial\DD$ be a compact set of zero logarithmic capacity.  
       
   Then there exist a bounded Jordan domain $W\subseteq V$ 
     and a conformal isomorphism 
       \[ \phi \colon \DD\to W \]
      with $\phi(0)=\nu$ whose continuous extension to $\overline{\DD}$ satisfies 
     $\phi(\xi) \in \Delta$ for all $\xi \in \Xi$. 
 \end{prop}
 
 Proposition~\ref{prop:spikes} follows from a result of Bishop~\cite{bishop2006} concerning interpolation using boundary values of
   conformal maps. It will be applied in settings where $\nu$ and $\Delta$ are very far apart in $V$ (see Figure~\ref{fig:confspikes}).
     To avoid interrupting the flow of ideas, we postpone the proof of Proposition~\ref{prop:spikes} until Section~\ref{sec:spikes}, but remark that most of the
      applications in this paper do not 
   use the full strength of the result. For the proof of Theorem~\ref{thm:main}, 
     we use only the case where $\# \Xi < \infty$. Moreover, the case when $\# \Xi = 1$ is trivial: 
     choose $W\subseteq V$ to be a Jordan domain containing $\nu$ and whose boundary passes through $\Delta$, and let $\phi$ be an
     appropriately normalised conformal isomorphism. This trivial case corresponds to the setting where $\# Z = 2$ which, as noted above, is 
     sufficient to prove 
     Theorem~\ref{thm:rippon}. We also remark that Proposition~\ref{prop:spikes} is not used in the proof of Theorem~\ref{thm:eremenkocounterexample}.

\begin{proof}[Proof of Proposition~\ref{prop:mainmaverick}]
The construction follows a similar pattern as the proof of Proposition~\ref{prop:main}. We
   inductively construct a sequence $(f_j)_{j=0}^{\infty}$ of entire functions, where $f_j$ approximates
   a function $g_j$~-- continuous on a closed (but unbounded)
    set $A_j$ and holomorphic on its interior~-- up to a uniform error of at most $\eps_j$. (Recall that $f_0$, $g_0$, $A_0$ and $\eps_0$ were already chosen in
    Section~\ref{sec:scaffolding}.) For $j\geq 0$, define
   \[ \Sigma_j \defeq \overline{S} \cup \bigl\{ z\in\C\colon \lvert 4\im z \rvert \leq 5^{j+1} + 3\bigr\}. \]
  Then $A_0\subseteq \Sigma_j$, $\Sigma_j \cap T_j = \emptyset$, $T_{\ell}\subseteq \Sigma_j$ for $\ell < j$ and
  $\bigcup_{j=0}^\infty \Sigma_j = \C$. 

\begin{figure}
\begin{center}
\def\svgwidth{\textwidth}
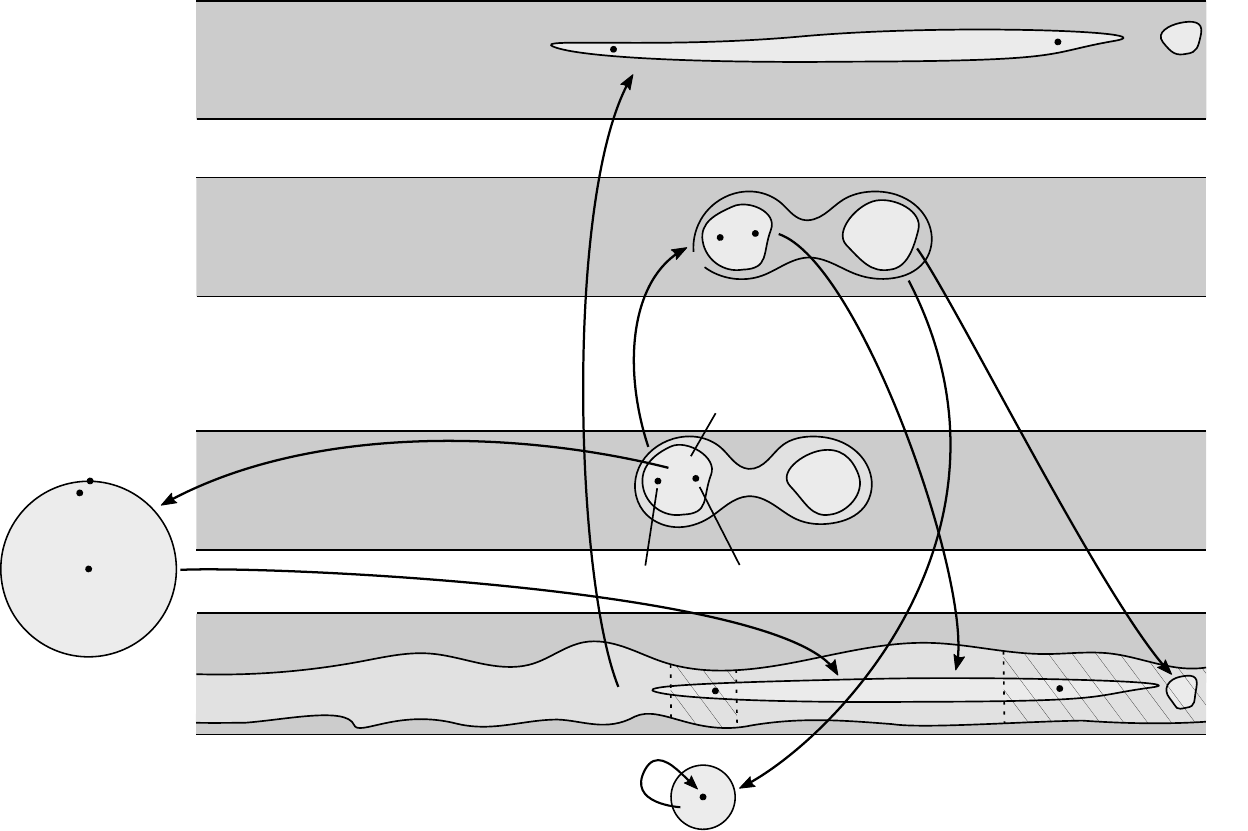 
\end{center}
\caption{The definition of $g_{j+1}$ in the proof of Proposition~\ref{prop:mainmaverick}.   The set $L_j\subseteq K_{m_j}\subseteq T_0$ (shown in light grey) has two connected components, one of which is $\overline{U_j}$, containing $\zeta_j$. On
  $f_j^{N_j}(U_j)$, the map $g_{j+1}$ is the composition of three conformal isomorphisms: $\bigl(f_j^{N_j}|_{U_j}\bigr)^{-1}$, 
    the Riemann map $\pi_j\colon U_j \to \DD$ and $\phi_j\colon \DD\to W_j\subseteq V_{j+1}$, as obtained from Proposition~\ref{prop:spikes}.}
    \label{fig:mainmaverickproof}
\end{figure}

The sequence $(m_j)_{j=0}^{\infty}$ is also chosen as part of the inductive 
   construction, which ensures the following inductive hypotheses. 
\begin{enumerate}[(i)]
 \item For every $j\geq 0$, $f_j^{N_j}$ is injective on $K_{m_j}$, and $f_j^{N_j}(K_{m_j})\subseteq T_j$.
   \label{item:injectivity_again} 
 \item For every $j\geq 1$, $\Sigma_{j-1}\subseteq A_j \subseteq \Sigma_{j}$.\label{item:nested_again}
 \item $\eps_j \leq \eps_{j-1}/2$ for $j\geq 1$. \label{item:inductiveclose_again}
\end{enumerate} 
Set $m_0\defeq 0$; the inductive
 hypothesis~\ref{item:injectivity_again} holds trivially for $j=0$. 

 Suppose that $f_{j}$ and $m_{j}$ have been constructed. Let 
   $R_j\subseteq T_j$ be a compact full neighbourhood of $Q_j \defeq f_{j}^{N_{j}}(P_{m_{j}})$ disjoint 
    from $f_{j}^{N_{j}}(K_{m_{j}+1})$. 

 Let $Z'_{j}$ consist of those 
    $\omega\in Z_{j}\setminus\{\zeta_{j}\}$ that belong to the same connected component of $K$ as $\zeta_j$. 
    Define 
     \[ \Xi_j \defeq \big\{ \xi^{\zeta_j}(\omega)\colon \omega\in Z_{j}' \big\} \subseteq \partial\DD. \]

  The inductive hypothesis \ref{item:inductiveclose_again} implies, in particular, that $f_{j}$ is $2\eps_0$-close to $g_0$ on $A_0$. Therefore
   the domains $V_{j+1} = V_{j+1}(f_{j})$ from Lemma~\ref{lem:strips} are defined. 
   Define
     $\Delta_j \defeq \{z\in V_{j+1}\colon \re z > j\}$ and $\tilde{\Delta}_j\defeq \{z\in V_{j+1}\colon \lvert \re z\rvert<1\}$.
     Choose any $\nu_j\in \tilde{\Delta}_j$ and apply Proposition~\ref{prop:spikes} with $V=V_{j+1}$, $\Delta=\Delta_j$, $\nu=\nu_j$
      and $\Xi=\Xi_j$. We obtain a Jordan domain $W_j\subseteq V_{j+1}$ and 
    a conformal map $\phi_j\colon \DD\to W_j$, extending continuously to $\partial\DD$, such that 
    $\phi_j(0)  = \nu_j \in \tilde{\Delta}_{j}$ and 
    $\phi_j(\xi) \in \Delta_{j}$ for all $\xi\in\Xi_j$.

   Now choose $m_{j+1}\geq m_{j} + 2$ sufficiently large that 
    \[ \phi_j( \pi_{m_{j+1}-1}^{\zeta_j}(\omega)) \in \Delta_j \] 
    for all $\omega \in Z_{j}'$, and such that 
      $\omega\notin U_{m_{j+1}-1}^{\zeta_j}$ for $\omega\in Z_{j}\setminus (Z_{j}'\cup\{\zeta_j\})$. 
      (Recall that $\pi_{m}^{\zeta_j}(\omega) \to 
        \xi^{\zeta_j}(\omega) \in \Xi_j$ as $m\to\infty$ for $\omega\in Z_{j}'$.) We set 
        $L_j \defeq K_{m_{j+1}-1}$, $U_j\defeq U_{m_{j+1}-1}^{\zeta_j}$ and $\pi_j\defeq \pi_{m_{j+1}-1}^{\zeta_j}$.
        Recall that $L_j$ is a finite disjoint union of closed Jordan domains, one of which is $\overline{U_j}$. In particular,
        the conformal map $\pi_j\colon U_j\to\DD$ extends continuously to $\overline{U_j}$.

  Let $\alpha_{j}$ be an affine map such that 
     \[\alpha_j(f_{j}^{N_{j}}(L_j)) \subseteq \{z\in V_{j+1} \setminus \overline{W_j}\colon \re z > j\}= \Delta_j\setminus \overline{W_j}. \]
   Set
     \[ A_{j+1}\defeq \Sigma_j \cup R_j \cup f_{j}^{N_{j}}(L_j) \subseteq \Sigma_j \cup T_j \subseteq \Sigma_{j+1}. \]
   The set $\Sigma_j$ is the disjoint union of a large closed central horizontal strip and the
      strips $\overline{S_k}$ for $k > j$. It is disjoint from $T_j$, and hence from
         $R_j$ and $f_j^{N_j}(L_j)$. The latter two sets are compact and full, and disjoint from each other by choice of $R_j$. 
      Hence $A_{j+1}$ satisfies the hypotheses of Arakelyan's theorem. 
        We define
   \[ g_{j+1} \colon A_{j+1}\to\C; \quad z\mapsto\begin{cases}
            f_{j}(z), & \text{if $z\in \Sigma_j$}, \\
             0, &\text{if $z\in R_j$}, \\
            \phi_j(\pi_j( (f_{j}^{N_{j}}|_{L_j})^{-1}(z))), &\text{if
             $z\in f_j^{N_j}(\overline{U_j})$}, \\
             \alpha_{j}(z), &\text{if $z\in f_j^{N_j}(L_j\setminus\overline{U_j})$}. \end{cases} \]
  (See Figure~\ref{fig:mainmaverickproof}.) Then $g_{j+1}$ is continuous on $A_{j+1}$ and holomorphic on its interior. 

   Finally, choose $\eps_{j+1}$ according to~\ref{item:inductiveclose_again} and sufficiently small that any entire 
     function $f$ with $\lvert f(z) - g_{j+1}(z)\rvert \leq 2\eps_{j+1}$ on $A_{j+1}$ satisfies: 
 \begin{enumerate}[(1)]
   \item $f^{N_{j}+1}(P_{m_j})\subseteq D$; \label{item:Pmimage_induction}
   \item $f^{N_{j+1}}$ is injective on $K_{m_{j+1}}\subseteq \interior(L_j)$;\label{item:injectiveLj}
   \item $f^{N_j+\ell}(K_{m_{j+1}})\subseteq S_{\ell-1}$ for $\ell=1,\dots,j+1$;\label{item:mappingthroughstrips}
   \item $f^{N_{j+1}}(K_{m_{j+1}}) \subseteq T_{j+1}$; \label{item:imageinT}
   \item $f^{N_{j}+1}(\zeta_j) \in \tilde{\Delta}_j$;\label{item:zetamapsback}
   \item $f^{N_j+1}(\omega) \in \Delta_j$ for $\omega\in Z_j\setminus \{\zeta_j\}$.\label{item:omegamapsright}
 \end{enumerate}
  Note that $g_{j+1}$ itself satisfies these properties,  and they are preserved under sufficiently 
   close approximation by Lemma~\ref{lem:approx2} and Corollary~\ref{cor:approx}. We complete the inductive construction by applying
   Arakelyan's theorem to find an entire function 
   $f_{j+1}$ that is $\eps_{j+1}$-close to $g_{j+1}$ on $A_{j+1}$. The inductive hypothesis~\ref{item:injectivity_again} follows from~\ref{item:injectiveLj} and~\ref{item:imageinT},
   while~\ref{item:nested_again} and~\ref{item:inductiveclose_again} hold by our choice of $A_{j+1}$ and $\eps_{j+1}$.

   By~\ref{item:nested_again} and~\ref{item:inductiveclose_again},
     \begin{equation}\label{eqn:fandgclose} \lvert f_{k}(z) - g_j(z)\rvert \leq \sum_{\ell=j}^{k} \eps_{\ell} < 2\eps_j \end{equation}
      whenever $k\geq j\geq 0$ and $z\in A_j\supseteq \Sigma_{j-1}$. It follows that $f_k$ converges locally uniformly to a limit function $f\colon\C\to\C$,
      which satisfies claim~\ref{item:Dinvariant} of Proposition~\ref{prop:mainmaverick} by~\eqref{eqn:fandgclose}.
   Claim~\ref{item:Pmimage} holds by~\ref{item:Pmimage_induction}. 
   Claim~\ref{item:injectiveKm} holds by~\ref{item:injectiveLj},~\ref{item:mappingthroughstrips} and~\ref{item:imageinT}.
   Claim~\ref{item:zetajimage} follows from~\ref{item:zetamapsback}.
   Finally,~\ref{item:prescribedbehaviour} follows from~\ref{item:omegamapsright} together with Lemma~\ref{lem:strips}. 
\end{proof}
\begin{rmk}\label{rmk:omegalimit}
  With a slight modification of the proof, we may achieve that all points $\zeta\in Z_{\BU}$ have the same $\omega$-limit set. 
Indeed, let $f$ be any map as in Lemma~\ref{lem:strips}, 
and consider the curves
       \[ \gamma_f^{j}\colon \R\to V_j; \qquad t\mapsto (f^j|_{V_j})^{-1}\Bigl(5^j\cdot t + \frac{5^{j+1}+9}{4}i \Bigr).  \] 
       Since $f$ and $\Phi$ are close to each other and 
      both are uniformly expanding on $S$, it is straightforward to see that $\gamma_f^j$ converges uniformly to a curve
        $\gamma_f\colon \R\to S_0$, which consists precisely of the points $z$ with $f^j(z)\in S_j$ for all $j\geq 0$. 
     Moreover, again because of the expansion of $f$, the distance $\lvert \gamma_f^j(t) - \gamma_f(t)\rvert$ is bounded by
     a constant that depends only on $j$ (not on $f$ or $t$), and tends to zero as $j\to\infty$.

 In the proof of Proposition~\ref{prop:mainmaverick}, when defining $g_{j+1}$, 
    we chose $\nu_j$ to have small real part, but could have chosen it to be any point of 
    $V_{j+1}$, as long as for every
     $\zeta \in Z_{\BU}$, there is an infinite set of $j$ with $\zeta = \zeta_j$ such that $\re \nu_j$ is bounded independently of
     $j$. Let
     $(t_j)_{j=0}^{\infty}$ be a sequence of rational numbers so that, for every $t\in \Q$ and every $\zeta\in Z_{\BU}$, 
     there are infinitely many $j$ such that 
     $t_j = t$ and $\zeta_j = \zeta$. In the proof of Proposition~\ref{prop:mainmaverick}, we then adjust the definition of $\nu_j$ and $\tilde{\Delta}_j$, taking
     $\nu_j \defeq \gamma_{f_j}^{j+1}(t_j)$ and $\tilde{\Delta}_j \defeq D(\nu_j,1/j)\cap V_{j+1}$ for all $j$. If $\eps_{j+1}$ is chosen sufficiently small, and
       $f$ is any function with $\lvert f(z) - g_{j+1}(z)\rvert \leq 2\eps_{j+1}$ on $A_{j+1}$, then 
       $f^{N_j+1}(\zeta_j)$ and  $\gamma_{f}^{j+1}(t_j)$ both belong to $\tilde{\Delta}_j$. (This replaces condition~\ref{item:zetamapsback} in the choice of $\eps_{j+1}$.)
       
For the resulting limit function $f$, the $\omega$-limit set of any 
      $z\in K$ is contained in the union $\Omega$ of $\gamma_f$, its forward iterates $f^n(\gamma_f)\subseteq S_n$, and $\infty$.  
        On the other hand, let $\zeta\in Z_{\BU}$. By construction, the $\omega$-limit set of $\zeta$ contains 
        $\gamma_f(t)$ for all $t\in\Q$, and hence agrees with $\Omega$.
                
  Observe that, if $Z_{\BU}$ contains a point of $\interior(K)$, then the resulting function $f$ has a wandering domain
    on which the set of (constant) limit functions of the iterates contains unbounded 
    closed connected sets. Lazebnik~\cite{lazebnik17} previously showed that 
    the set of limit functions in a wandering domain can be uncountable, answering
    a question raised by
    Osborne and Sixsmith~\cite[Question~2]{osborne-sixsmith16}.
\end{rmk}

\section{Larger sets of  maverick points}

\label{sec:mavericklarge}

In this section, we indicate how we may modify the proof of Proposition~\ref{prop:mainmaverick}
    to obtain examples with larger sets of maverick points. In particular, we prove 
    Theorems~\ref{thm:capacity} and~\ref{thm:HD}. Recall that Theorem~\ref{thm:capacity} says that in the case that $U=\mathbb{D}$, we may obtain any compact subset of $\partial \mathbb{D}$ of logarithmic capacity zero as the set of maverick points of $U$. On the other hand, Theorem~\ref{thm:HD} shows that for some domains, the set of maverick points may have positive planar Lebesgue measure. 

 As already mentioned, in the proof of Proposition~\ref{prop:mainmaverick}, we do not use the full strength of Proposition~\ref{prop:spikes}, which we apply only to
   a finite set $\Xi_j$. Recall that, in Proposition~\ref{prop:spikes}, the compact set $\Xi$ may be chosen to be infinite, as long as it has 
   zero logarithmic capacity. Another way in which we may modify the proof of Proposition~\ref{prop:mainmaverick} is that we might choose to reverse the roles
   of $\Delta$ and $\tilde{\Delta}$, so that the points of the sequence $\zeta_j$ become escaping points rather than elements of $\BU(f)$. 
   
   To investigate the additional flexibility this gives us in the proof, let us follow the set-up and notation as in Section~\ref{sec:maverickconstruction}, but modify the parts of the construction that were 
    related to the sets 
    $Z_{\I}$ and $Z_{\BU}$. As before,
   we suppose that $K\subseteq T_0$ is a compact, full set. For simplicity, we restrict here to the case 
    where $K$ is the closure of a simply connected domain $U=\interior(K)$, which is the case of interest for Theorems~\ref{thm:capacity} and~\ref{thm:HD}. 
    We also fix a point $\zeta_0\in U$ and $k\in\{0,1\}$. Our goal is to construct a function $f$ for which $U$ is 
    an oscillating (if $k=0$) or escaping (if $k=1$) wandering domain, and for which $\partial U$ contains a set of maverick points on its boundary that is ``large'' in some sense. 
    Theorem~\ref{thm:main} shows that we may realise any countable set $Z_{\mav}\subseteq \partial U$ as a set of maverick points. 
    Indeed, we may set  $Z_{\BU} = \{\zeta_0\}$ and $Z_{\I} = Z_{\mav}$ if $k=0$, or $Z_{\BU} = Z_{\mav}$ and $Z_{\I} = \{\zeta_0\}$ if $k=1$. 
    
  Now, instead of $Z_{\I}$ and $Z_{\BU}$, suppose that we start with a given $\sigma$-compact set $Z_{\mav}\subseteq \partial U$. For $j\geq 0$, also fix 
    \begin{enumerate}[(i)]
      \item non-empty compact sets $Z_j \subseteq Z_{\mav}$ with the property that, for every $z\in Z_{\mav}$, there are infinitely many $j$ such that $z\in Z_j$; 
      \item compact sets $\Xi_j \subseteq \partial\DD$ of zero logarithmic capacity.\label{item:Xij}
    \end{enumerate}
   Once again, we choose a nested sequence $K_m\subseteq T_0$ of closed Jordan domains   shrinking to $K$, and 
     a conformal map $\pi_m\colon U_m \to \mathbb{D}$ with $\pi_m(\zeta_0)=0$, where $U_m = \interior(K_m)$. The key assumption is
     that this choice can be made in such a way that
    \begin{enumerate}[(*)]
      \item for every $j\geq 0$,  we have $\pi_m(Z_j)\to \Xi_j$ in the Hausdorff metric
        as $m\to\infty$.\label{item:convergenceassumption}
    \end{enumerate}
    
    With this setup, we follow the proof of Proposition~\ref{prop:mainmaverick} almost word-for-word, except for the following modifications. 
     \begin{itemize}
        \item We take $\zeta_j \defeq \zeta_0$ for all $j$. 
        \item The role of $Z_j'$ is taken by $Z_j$ (since $\zeta_j\notin Z_j$ and $K$ is connected). 
        \item The set $\Xi_j$ is the set from~\ref{item:Xij}. 
        \item If $k=1$, then the roles of $\Delta_j$ and $\tilde{\Delta}_j$ are exchanged. That is, in this case
                $\tilde{\Delta}_j \defeq \{z\in V_{j+1}\colon \re z > j\}$ and $\Delta_j\defeq \{z\in V_{j+1}\colon \lvert \re z\rvert<1\}$.
        \item The map $\alpha_j$ is not required, since $L_j = \overline{U_j}$. 
    \end{itemize}
    
    \begin{prop}[General construction of maverick points]\label{prop:generalmaverick}
      Using the above notation, and assuming~\ref{item:convergenceassumption}, there exists a transcendental entire function $f$ with the following properties. 
       \begin{enumerate}[(a)]
         \item $U$ is an oscillating wandering domain if $k=0$, and an escaping wandering domain if $k=1$. 
         \item Every point of $Z_{\mav}\subseteq \partial U$ is a maverick point. 
         \item If $k=0$, then every point $z\in Z_{\mav}$ such that $z\in Z_j$ for all but finitely many $j$ belongs to the escaping set.\label{item:escapingmaverick} 
       \end{enumerate}
    \end{prop}
    \begin{proof}
      That $U$ is a wandering domain follows exactly as in the proof of Theorem~\ref{thm:main}. We have 
         $f^{N_j+1}(\zeta_0) \in \tilde{\Delta}_j$ for all $j$. If $k=0$, then $\lvert \re f^{N_j+1}(\zeta_0)\rvert < 1$ for all $j$; hence
          $\zeta_0\in \BU(f)$ and $U$ is an oscillating wandering domain. If $k=1$, then $\re f^{N_j+1}(\zeta_0) > j$ for all $j$, and by Lemma~\ref{lem:strips}, also 
          $\re f^{n}(\zeta_0)>j$ for $N_j+1\leq n \leq N_{j+1}$. Hence $\zeta_0\in I(f)$ and $U$ is an escaping wandering domain. 

    For $j\geq 0$, we have $f^{N_j+1}(\zeta_0)\in \tilde{\Delta}_j$, but $f^{N_j+1}(z)\in \Delta_j$ for all $z\in Z_j$. In particular, 
        $\limsup_{j\to \infty} \dist^{\#}(f^{N_j+1}(z), f^{N_j+1}(\zeta_0)) > 0$ for all $z\in Z_{\mav}$. Hence every such $z$ is a maverick point (see Lemma~\ref{lem:maverick}
        below).

      If $k=0$ and $z\in Z_j$ for all but finitely many $j$, then $\re f^{N_j+1}(z) > j$ for all such $j$, and it follows as in the proof
        of part~\ref{item:prescribedbehaviour} of Proposition~\ref{prop:mainmaverick} that $\re f^{n}(z) > j$ for $N_j+1\leq n \leq N_{j+1}$. Hence 
        $z\in I(f)$.
    \end{proof}
    
We now apply this observation to prove Theorems~\ref{thm:capacity} and~\ref{thm:HD}.
          
\begin{proof}[Proof of Theorem~\ref{thm:capacity}]
  Let $K_m = \overline{D(\zeta_0, r_m)}$ be a nested sequence of concentric closed discs shrinking
   to a disc $K= \overline{D(\zeta_0,r)}$, where $\zeta_0\in T_0$ and $r_0$ is chosen sufficiently small such that $K_0\subseteq T_0$. 
   Let $\pi_m(z) = (z-\zeta_0)/r_m$; then $\pi_m$ maps $U_m \defeq \interior(K_m)$ to $\DD$. 
     Define 
        \[ Z_{\mav} \defeq \{ \zeta_0 + r\xi \colon  \xi\in \Xi\}. \]
    Then $\pi_m(Z_{\mav})= \Xi$ for all $m$, so property~\ref{item:convergenceassumption}, is satisfied.
    
    Let $k\in \{0,1\}$, and apply Proposition~\ref{prop:generalmaverick} with
      $Z_j = Z_{\mav}$  and $\Xi_j = \Xi$ for all $j$. We obtain an entire function for which $U = D(\zeta_0,r)$ is 
       a wandering domain (oscillating or escaping depending on $k$) and all points of $Z_{\mav}$ are maverick points. 
       Conjugating with an affine map that takes $K$ to $\overline{\DD}$, the proof is complete.
\end{proof}

\begin{proof}[Proof of Theorem~\ref{thm:HD}]
 Let $Z\subseteq \C$ be a Jordan arc of positive Lebesgue measure, let 
    $\gamma\colon (0,1)\to \C\setminus Z$ be an injective analytic curve such that as $t\to 0$, $\gamma(t)$ accumulates on
    all of $Z$ from one side, while as $t\to 1$, $\gamma(t)$ accumulates on all of $Z$ from the other side. Then $\C\setminus (Z\cup \gamma)$ has
    exactly two connected components by a generalised version of the Jordan curve theorem; see e.g.~\cite[Theorem~5.7]{timorinmoore}.
   Let $U$ be the bounded connected component of $\C\setminus (Z\cup \gamma)$. Then $U$ is a regular simply connected domain with 
   $\partial U = Z\cup \gamma$; we set $K\defeq \overline{U}$. Let $\zeta_0\in U$, and observe that $Z$ has zero harmonic measure in $U$. (Indeed, by construction 
   all points of $Z$, with the exception of one endpoint, are inaccessible from $U$.)
    
   Choose  a nested sequence of closed Jordan domains $(K_m)$ shrinking down to $K$. Set $U_m = \interior(K_m)$ and let $\pi_m\colon U_m\to \DD$ be a conformal
   isomorphism with $\pi_m(\zeta_0)=0$. Then the Euclidean diameter of $\pi_m(Z)$ tends to zero as $m\to\infty$, so we may normalise $\pi_m$ in such a way that 
   $\pi_m(Z)\to \{1\}$ as $m\to\infty$. 
   
   We assume that the above sets were chosen such that $K_0\subseteq T_0$. For $j\geq 0$, set
    $Z_j \defeq Z_{\mav}\defeq Z$, $\Xi_j \defeq \{1\}$. Let $k\in \{0,1\}$, and apply 
     Proposition~\ref{prop:generalmaverick} with either $k=0$ or $k=1$. 
     We obtain a function $f$ for which $U$ is an escaping (if $k=1$) or oscillating (if $k=0$) wandering domain, and for which the set of
     maverick points contains $Z$ and hence has positive Lebesgue measure. The set of non-maverick points is contained in the 
    analytic curve $\gamma = \partial U\setminus Z$, and hence has Hausdorff dimension $1$. 
\end{proof}

\begin{rmk}
 For the functions constructed in the proofs of Theorems~\ref{thm:capacity} and~\ref{thm:HD}, if the wandering domain is oscillating, then
  all points of $Z_{\mav}$ are in fact escaping by Proposition~\ref{prop:generalmaverick}~\ref{item:escapingmaverick}.
\end{rmk}

\section{Counterexamples to Eremenko's conjecture}\label{sec:eremenko}

We prove Theorem~\ref{thm:eremenkocounterexample} first in the case 
  where $X=\{0\}$, and then indicate how to modify the proof in order to obtain the more general case.

\begin{thm}\label{thm:eremenkopoint}
  There is a transcendental entire function $f$ such that  \begin{enumerate}[(a)]
    \item $[0,+\infty)\subseteq J(f)$;
    \item $0\in \I(f)$;
    \item $(0,+\infty)\subseteq \BU(f)$;
    \item $[0,+\infty)$ is a connected component of $\J(f)\cup \I(f)\cup \BU(f)$.
  \end{enumerate}  
  In particular, $\{0\}$ is a connected component of $I(f)$.  
\end{thm}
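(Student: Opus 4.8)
The plan is to run the inductive scheme of Sections~\ref{sec:scaffolding} and~\ref{sec:maverickconstruction}, with the wandering compactum $K$ replaced by a ray: we place a copy of $[0,+\infty)$ inside the strip $T_0$, arranged so that the ray under consideration is literally $[0,+\infty)$. The structural novelty is that, since $\widehat{K}\defeq[0,+\infty)\cup\{\infty\}$ is a full compact subset of $\widehat{\C}$, the spherical analogue of Lemma~\ref{lem:Kn} (obtained from it via a Möbius change of coordinates) provides nested closed Jordan domains $\widehat{K}_0\supset\widehat{K}_1\supset\cdots$ of $\widehat{\C}$ with $\bigcap_m\widehat{K}_m=\widehat{K}$. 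Setting $K_m\defeq\widehat{K}_m\setminus\{\infty\}$ and $\sigma_m\defeq\partial\widehat{K}_m$, each $K_m$ is an unbounded ``tube'' around $[0,+\infty)$ and each $\sigma_m$ is an unbounded Jordan arc accumulating at $\infty$. This is precisely where unboundedness of the ray enters: a thin (slit-shaped) neighbourhood of $\sigma_m$ still has complement in $\widehat{\C}$ which is connected and locally connected at $\infty$, because $\sigma_m$ passes through $\infty$; hence such a neighbourhood may be sent into the forward-invariant disc $D$ of Section~\ref{sec:scaffolding} using Arakelyan's theorem, which would be impossible for a Jordan curve surrounding a bounded set. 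The curves $\sigma_m$ will play the combined role of the sets $P_m$ of Section~\ref{sec:uniform} (so that the attracting basin of $D$ accumulates on the ray) and of a separating family forcing $[0,+\infty)$ to be a component of $\J(f)\cup\I(f)$.

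Fix once and for all a sequence of compact intervals $I_j\subset(0,+\infty)$ with $0\notin I_j$ for all $j$, chosen so that every $t\in(0,+\infty)$ lies in $I_j$ for infinitely many $j$ (for instance let $(I_j)$ run through $\{[1/k,k]:k\in\N\}$, each interval occurring infinitely often). I would then prove the following analogue of Propositions~\ref{prop:main} and~\ref{prop:mainmaverick}: there is a transcendental entire function $f$, a locally uniform limit of entire functions $(f_j)$ built with Arakelyan's theorem exactly as in Section~\ref{sec:scaffolding}, together with an increasing sequence $(m_j)$, such that with $N_j$ as in~\eqref{eqn:Nj}:
\begin{enumerate}[(a)]
  \item $f(\overline{D})\subset D$;
  \item $f^{N_j+1}(\sigma_{m_j})\subset D$ for all $j\geq 0$;
  \item $f^{N_j+1}$ is injective on $K_{m_{j+1}}$, with $f^{N_j+1}(K_{m_{j+1}})\subset V_{j+1}(f)\subset S_0$;
  \item $\re f^{N_j+1}(0)\geq j$ for all $j\geq 1$, where $0$ denotes the finite endpoint of the ray;
  \item $\lvert\re f^{N_j+1}(t)\rvert\leq 1$ for every $t\in I_j$ and every $j\geq 0$.
\end{enumerate}
At step $j$ one defines $g_{j+1}$ to agree with $f_j$ on the large set $\Sigma_j$, to send a tube-shaped neighbourhood $R_j$ of $f_j^{N_j}(\sigma_{m_j})$ (disjoint from $f_j^{N_j}(K_{m_{j+1}-1})$) to $0\in D$, and to map $f_j^{N_j}(K_{m_{j+1}-1})$ injectively into $V_{j+1}(f_j)$ so that the portion of the image ray lying over $I_j$ is pulled back to bounded real part while the endpoint is pushed to real part at least $j$. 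As in Section~\ref{sec:maverickconstruction}, this last map is built by composing a conformal map of the tube $\widehat{K}_{m_{j+1}-1}$ onto $\DD$ with a ``spike'' conformal map $\DD\to T_{j+1}$ and pulling back through $f_j^{j+1}|_{V_{j+1}(f_j)}$; the required spike map comes from a variant of Proposition~\ref{prop:spikes} which, given the arc in $\DD$ corresponding to the core ray, sends the sub-arc corresponding to $I_j$ into a bounded part of the strip and the point corresponding to the endpoint far to the right. That $I_j$ is \emph{compactly} contained in $(0,+\infty)$, hence bounded away from both the endpoint and $\infty$, is what makes this possible. One must also verify at each step that $A_{j+1}\defeq\Sigma_j\cup R_j\cup f_j^{N_j}(K_{m_{j+1}-1})$ meets the hypotheses of Arakelyan's theorem, the new point being that $R_j$ and $f_j^{N_j}(K_{m_{j+1}-1})$ are now unbounded, so one must check that $\widehat{\C}\setminus A_{j+1}$ stays connected -- again using that these sets escape to $\infty$ only along the positive real direction.

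Granting this proposition, the theorem follows as in the deduction of Theorem~\ref{thm:main} from Proposition~\ref{prop:mainmaverick}. Iterating~(c) gives $f^{N_j}(K_{m_j})\subset T_j$, so every $z\in[0,+\infty)$ has $\lvert f^{N_j}(z)\rvert>5^{j+1}/4\to\infty$, and in particular $\limsup_n\lvert f^n(z)\rvert=\infty$. By~(a), (b) and complete invariance of $\F(f)$, each $\sigma_{m_j}$ lies in the attracting basin of $D$, hence in $\F(f)\setminus\I(f)$; as the $\sigma_{m_j}$ accumulate on $[0,+\infty)$, the family $(f^n)$ is not equicontinuous at any point of the ray, so $[0,+\infty)\subset\J(f)$. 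If a connected $C\subset\J(f)\cup\I(f)$ strictly contained $[0,+\infty)$, then a point $z\in C\setminus[0,+\infty)$ would satisfy $z\notin\widehat{K}_{m_j}$ for large $j$, forcing $C$ to meet both complementary components of the Jordan curve $\sigma_{m_j}$, hence to meet $\sigma_{m_j}\subset\F(f)\setminus\I(f)$ -- impossible; so $[0,+\infty)$ is a component of $\J(f)\cup\I(f)$. By~(d) and repeated use of the estimate $\lvert\re f(w)\rvert>4\lvert\re w\rvert$ from Lemma~\ref{lem:strips} (applied through $S_1,\dots,S_j$ and then into $T_{j+1}$), $\lvert f^n(0)\rvert\geq j$ for $N_j+1\leq n\leq N_{j+1}$, so $\lvert f^n(0)\rvert\to\infty$ and $0\in\I(f)$; while for each $t\in(0,+\infty)$, property~(e) at the infinitely many $j$ with $t\in I_j$ forces $\liminf_n\lvert f^n(t)\rvert\leq\sqrt{5}$, so $t\notin\I(f)$ and therefore $t\in\BU(f)$. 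Hence $[0,+\infty)\cap\I(f)=\{0\}$, and since the connected component of $\I(f)$ through $0$ is contained in that of $\J(f)\cup\I(f)$ through $0$, namely $[0,+\infty)$, it equals $\{0\}$.

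The step I expect to demand the most care is the inductive construction of the maps $g_{j+1}$ realising~(b), (d) and~(e) at once: concretely, the conformal-geometry lemma generalising Proposition~\ref{prop:spikes} that simultaneously pulls a prescribed sub-arc of the core ray into a bounded region of $V_{j+1}(f_j)$ and pushes the endpoint far to the right, together with the verification that the unbounded ``slit'' sets $R_j$ (neighbourhoods of the images of the curves $\sigma_{m_j}$) have the connectedness and local-connectedness-at-$\infty$ properties required by Arakelyan's theorem. The remaining estimates are routine adaptations of Section~\ref{sec:maverickconstruction}.
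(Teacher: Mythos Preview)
Your overall strategy is correct and matches the paper's, but there is a genuine topological error in your setup of the approximating sets. You claim that the spherical analogue of Lemma~\ref{lem:Kn}, applied to $\widehat{K}=[0,+\infty)\cup\{\infty\}$, yields closed Jordan domains $\widehat{K}_m$ whose boundaries $\sigma_m$ ``pass through $\infty$'' and are hence unbounded Jordan arcs in $\C$. This is false: Lemma~\ref{lem:Kn} gives sets with $\widehat{K}\subset\interior(\widehat{K}_m)$, so in particular $\infty\in\interior(\widehat{K}_m)$, and therefore $\sigma_m=\partial\widehat{K}_m$ is a \emph{bounded} Jordan curve in $\C$. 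It follows that $K_m=\widehat{K}_m\setminus\{\infty\}$ is not a ``tube'' around the ray but the closed exterior of a bounded Jordan curve. This breaks several steps simultaneously: $\interior(K_m)$ is not simply connected in $\C$ (so your conformal map to $\DD$ does not exist), $K_m$ cannot be contained in the strip $T_0$, and any tubular neighbourhood $R_j$ of the closed curve $f_j^{N_j}(\sigma_{m_j})$ separates the plane, so $\Ch\setminus A_{j+1}$ is disconnected and Arakelyan's theorem does not apply~--- precisely the obstruction you said unboundedness of $\sigma_m$ would avoid.

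The paper sidesteps this by \emph{not} invoking Lemma~\ref{lem:Kn}: it constructs the $K_j$ directly, during the induction, as closed horizontal half-strips centred on the ray, so that $\infty$ lies on $\partial K_j$ and $\partial K_j$ is genuinely an unbounded arc with both ends at $+\infty$. The paper's conformal-geometry step is also different from (and more concrete than) your proposed spike map through $\DD$. At stage $j$ one chooses a half-strip $\tilde{L}_j\subset\interior(K_j)$ with the endpoint $\zeta$ on $\partial\tilde{L}_j$, and takes an explicit conformal isomorphism $\tilde{\phi}_j$ from $\interior(\tilde{L}_j)$ onto a thin horizontal strip $W_j\subset T_{j+1}$ with $\tilde{\phi}_j(\zeta)=-\infty$ and $\tilde{\phi}_j(+\infty)=+\infty$. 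Making $W_j$ thin bounds the Euclidean diameter of hyperbolic balls, so $\tilde{\phi}_j([\zeta+1/t_j,\zeta+t_j])$ lies at real part of modulus less than $1$; a small horizontal shift $\phi_j(z)=\tilde{\phi}_j(z+\eta)$ then places $\zeta$ at a finite but far-left point while barely moving the sub-interval. Placing the endpoint on the \emph{boundary} of the auxiliary domain (rather than deep inside a tube) is the mechanism that separates its behaviour from that of the compact sub-interval; it replaces your appeal to a variant of Proposition~\ref{prop:spikes}. The explicit factorisation of this half-strip-to-strip map also gives the uniform two-sided bounds on $\lvert\phi_j'\rvert$ needed to apply Corollary~\ref{cor:approx} on the unbounded set $L_j$, a point your outline does not address.
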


In the proofs of Theorems~\ref{thm:eremenkopoint} and~\ref{thm:eremenkocounterexample}, we shall use conformal maps 
  between domains that agree with strips close to infinity. The following simple fact will allow us to control their derivatives.
  
\begin{lem}[Conformal maps between extensions of half-strips]\label{lem:stripderivative}
  Let $U_1,U_2\subseteq\C$ be simply connected domains such that, for sufficiently large $R>0$ and $j\in \{1,2\}$, 
    \[ \{ \im z\colon z\in U_j \text{ and } \re z = R\} = (-\pi/2,\pi/2). \] 
   Let $\phi\colon U_1\to U_2$ be a conformal map such that $\re \phi(z)\to +\infty$ as $\re z\to +\infty$. 
     Then 
        \[ \phi'(z) = 1 + O(\exp(-\re z))  \qquad\text{and}\qquad
           \phi(z) = z + \rho + O(\exp(-\re z)) \]
     as $\re z\to+\infty$, where $\rho\in\R$ is a constant. 
\end{lem}
\begin{proof}
  Restricting $\phi$, if necessary, we may assume that $U_1$ and $U_2$ are contained in the strip of height $\pi$ centred at the real axis. 
    Write $w = \phi(z)$, $\zeta = \exp(-z)$ and $\omega = \exp(-w)$. Since $\exp$ is injective on $U_1$ and $U_2$, setting
    $\psi(\zeta) \defeq \omega$ yields a well-defined
    conformal map $\psi\colon \exp(-U_1)\to \exp(-U_2)$. By the Schwarz reflection principle, $\psi$ extends conformally to a neighbourhood of $0$;
    set $\lambda\defeq \psi'(0)>0$ and $\rho \defeq -\log \lambda$. 
    
  We have $\omega = \lambda\zeta + O(\lvert \zeta\rvert^2)$, $1/\omega = 1/(\lambda \zeta) + O(1)$ and $\psi'(\zeta)=\lambda +O(\lvert \zeta\rvert)$
    as $\zeta\to 0$ (and hence as $\re z\to +\infty$). Thus
    \[ \phi'(z) = -\zeta \cdot \psi'(\zeta) \cdot \frac{-1}{\omega} = 
                         \frac{\lambda \zeta}{\omega} \cdot (1 + O(\lvert \zeta\rvert)) = 1 + O(\lvert \zeta\rvert) = 1 + O(\exp(-\re z)) \]
   as claimed. Similarly,
     \[ \phi(z) = \Log\Bigl(\frac{1}{\omega}\Bigr) = \Log\Bigl(\frac{1}{\lambda \zeta} + O(1)\Bigr) = z + \rho + O(\lvert \zeta\rvert). \qedhere \]
\end{proof}

\begin{figure}
\begin{center}
\def\svgwidth{.65\textwidth}
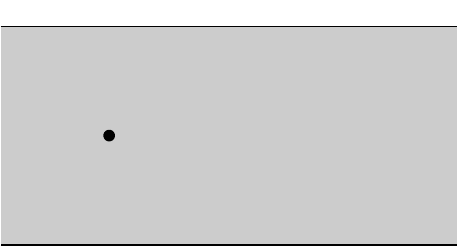 
\end{center}
\caption{The unbounded closed set $K=\bigcap_{j= 0}^\infty K_j$ in the proof of Theorem~\ref{thm:eremenkopoint}.}\label{fig:eremenko}
\end{figure}
\begin{proof}[Proof of Theorem \ref{thm:eremenkopoint}]
  We shall prove the theorem with $[0,\infty)$ replaced by $K\defeq [0,\infty) + 7i/2$; the result then follows by conjugating with a translation. 
    Let the strips $(S_j)$, $(T_j)$, the disc $D$, the map $f_0$ and the numbers $N_j$ and  
    $\eps_0$
    be as in Section~\ref{sec:scaffolding}. Also recall the definition of the domains $V_j(f)$ from Lemma~\ref{lem:strips}. 
    Observe that $K\subseteq T_0$. 
    
    Similarly as in Proposition~\ref{prop:mainmaverick}, we 
     construct an entire function $f$ and a sequence
      $(K_j)_{j=0}^{\infty}$ with the following properties for all $j\geq 0$, where 
        $\zeta \defeq 7i/2$ and $t_j\defeq j+2$ (see Figure~\ref{fig:eremenko}):
     \begin{enumerate}[(a)]
       \item $K_j$ is a closed horizontal half-strip with $K\subseteq K_{j+1}\subseteq \interior(K_j)\subseteq T_0$ 
         for all $j\geq 0$, and $\bigcap_{j=0}^\infty K_j = K$;\label{item:eremenkohalfstrips}
       \item $f(\overline{D})\subseteq D$;\label{item:eremenkodisc}
       \item $f^{N_j+1}(\partial K_j)\subseteq D$;\label{item:eremenkoboundary}
       \item $f^{N_j+1}$ is injective on $K_{j+1}$, with $f^{N_j+1}(K_{j+1}) \subseteq V_{j+1}(f)$;\label{item:eremenkoimage}
       \item if $z = t + 7i/2$ with $1/t_j \leq t \leq t_j$, then $\lvert \re f^{N_j+1}(z)\rvert \leq 1$;\label{item:eremenkobungee}
       \item if $N_j+1\leq n \leq N_{j+1}$, then $\lvert \re f^n(\zeta)\rvert \geq j$.\label{item:eremenkoescaping}
     \end{enumerate}
     
  Observe that this proves the theorem. Indeed, let $z\in K\setminus\{\zeta\}$. Then by~\ref{item:eremenkoimage} and~\ref{item:eremenkobungee}, for $j\geq 0$, 
    $\lvert \re f^{N_j+1}(z)\rvert \leq 1$ and $1\leq \im f^{N_j+1}(z)\leq 2$, while 
    $f^{N_{j+1}}(z) \in T_{j+1}$, and hence $f^{N_{j+1}}(z)\to\infty$ as $j\to\infty$. So $z\in \BU(f)$. On the other hand, 
     $\zeta\in I(f)$ by~\ref{item:eremenkoescaping}. Furthermore, by~\ref{item:eremenkodisc} and ~\ref{item:eremenkoboundary}, $\partial K_j$ belongs to 
     $F(f)\setminus (I(f)\cup \BU(f))$. 
    So, by~\ref{item:eremenkohalfstrips}, $\bigcup_{j=0}^\infty \partial K_j$ separates $K$ from every other point of $J(f)\cup I(f)\cup \BU(f)$.
    \begin{figure}
\begin{center}
\def\svgwidth{\textwidth}
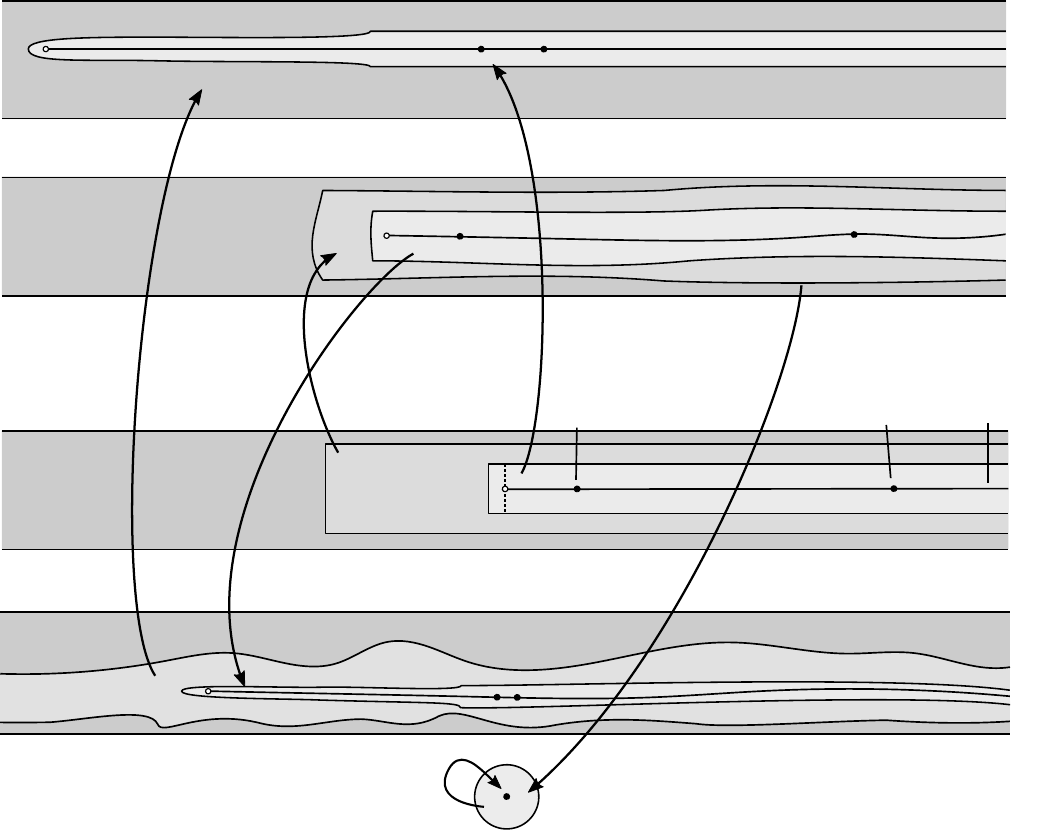 
\end{center}
\vspace*{-10pt}
\caption{The definition of $g_{j+1}$ in the proof of Theorem~\ref{thm:eremenkopoint}. The set $\tilde{L}_j \subseteq K_j \subseteq T_0$ is the light grey half-strip containing the ray $K$. On $\interior(f_j^{N_j}(\tilde{L}_j))$, the map $g_{j+1}$ is the composition of three conformal maps: 
  $(f_j^{N_j}|_{\interior(\tilde{L}_j)})^{-1}$,  $\tilde{\phi}_j$, and $\left(f_j^{j+1}|_{V_{j+1}(f_j)}\right)^{-1}$.}
    \label{fig:eremenkopointproof}
\end{figure}

  To construct the function $f$, we proceed similarly as in the proof of Proposition~\ref{prop:mainmaverick}, but will need to take some care because now the sets $K$ and $K_j$ are unbounded. Thus, we once more construct a 
    sequence of entire functions $f_j$, each of which 
    approximates a function $g_j$ (defined inductively in terms of $f_{j-1}$)
    on a set $A_j$ up to an error $\eps_j$, where $A_j$ satisfies the hypotheses
    of Arakelyan's theorem. Again define 
   \[ \Sigma_j \defeq \overline{S} \cup \bigl\{ z\in\C\colon \lvert 4\im z \rvert \leq 5^{j+1} + 3\bigr\}. \]
   Along with $f_j$, we construct a half-strip $K_j$ 
    as in~\ref{item:eremenkohalfstrips}, in such a way that the following
    inductive hypotheses are satisfied: 
     \begin{enumerate}[(i)]
       \item $f_j^{N_j}$ is injective on $K_j$, 
            $f_j^{N_j}(K_j)\subseteq T_j$ and $\dist(f_j^{N_j}(K_j),\partial T_j)>0$;\label{item:eremenkofj}
       \item $\re f_j^{N_j}(z) \to +\infty$ as $z\to\infty$ in $K_j$;\label{item:eremenkoinfinity}
        \item on the set $\widehat{K}_j \defeq \bigcup_{k=0}^{N_j-1} f_j^k(K_j)$, the derivative
                 $\lvert f_j'\rvert$ is bounded from above and below by positive constants, and $f_j$ is uniformly continuous at every point 
                 of $\widehat{K}_j$ in the sense of Remark~\ref{rmk:uniformcontinuity};\label{item:eremenkoderivative}
        \item $\Sigma_{j-1}\subseteq A_j \subseteq \Sigma_j$ if $j\geq 1$;\label{item:eremenkoAj}
        \item $\eps_j \leq \eps_{j-1}/2$ if $j\geq 1$.\label{item:eremenkoeps}
     \end{enumerate}
   
   We choose $K_0 \subseteq \interior(T_0)$ to be an arbitrary closed half-strip with $K\subseteq \interior(K_0)$  
     (see Figure~\ref{fig:eremenko}). 
    The inductive hypotheses hold trivially for $j=0$ (recall that $N_0=0$).
        
    Suppose that $f_j$ and $K_j$ have been constructed. 
             Let $L_j$ be a closed horizontal half-strip contained in $\interior(K_j)$ such that
             $\zeta\in\partial L_j$ and $K\setminus \{\zeta\}\subseteq \interior(L_j)$. 
           Let $\delta_j$ denote the hyperbolic length, in the hyperbolic metric of $\interior(L_j)$, of the segment $[\zeta+1/t_j,\zeta+t_j]$.
        
        By~\ref{item:eremenkofj} and~\ref{item:eremenkoinfinity}, 
         $Q_j\defeq f_j^{N_j}(\partial K_j)$ is a Jordan arc with both ends at infinity and $\dist(Q_j,\partial T_j)>0$. By~\ref{item:eremenkoderivative}, also
        \smash{$\dist(Q_j, f_j^{N_j}(L_j)) > 0$}. Let $R_j\subseteq T_j$ be a closed neighbourhood of $Q_j$ homeomorphic to a bi-infinite closed strip (with real part tending to $+\infty$ in both directions),
           such that $\dist(Q_j,\partial R_j)>0$ and \smash{$\dist(R_j , f_j^{N_j}(L_j))>0$}. 
       
     Now let $W_j \subseteq T_{j+1}$ be a bi-infinite horizontal strip. If $W_j$ is chosen sufficiently thin, then
       any hyperbolic ball of radius $\delta_j$ (in the hyperbolic metric of $W_j$) has Euclidean diameter less than $1$. 
       Let 
       $\phi_j \colon \interior(L_j) \to W_j$ be a conformal isomorphism with $\re \phi_j(\zeta+1) = 0$ whose continuous extension to the boundary satisfies
       $\phi_j(\zeta) = -\infty$ and $\phi_j(+\infty) = +\infty$. Then $\lvert \re\phi_j( \zeta +t )\rvert < 1$ for $1/t_j\leq t \leq t_j$. Observe that $\phi_j(z)$ is finite for all
       $z\in \partial L_j\setminus\{\zeta\}$. 
       
     Let $\eta>0$ be small (see below); define $\tilde{L}_j \defeq L_j - \eta/2$ and $\tilde{\phi}_j(z) \defeq \phi_j(z + \eta)$. Then $\tilde{\phi}_j\colon \tilde{L}_j\to\overline{W_j}$ is continuous. Set 
         \[ A_{j+1} \defeq \Sigma_j \cup R_j \cup f_j^{N_j}(\tilde{L}_j) \]
         and
            \begin{equation}\label{eqn:eremenkog} g_{j+1}\colon A_{j+1}\to \C; \quad
              z \mapsto \begin{cases} f_j(z), &\text{if }z\in\Sigma_j, \\
                   0, &\text{if }z\in R_j, \\
                   (f_j^{j+1}|_{V_{j+1}(f_j)})^{-1}\bigl(\tilde{\phi}_j( (f_j^{N_j}|_{\tilde{L}_j})^{-1}(z))\bigr), &\text{if }z\in f_j^{N_j}(\tilde{L}_j). \end{cases} \end{equation}
   (See Figure~\ref{fig:eremenkopointproof}.)  
     If $\eta$ is chosen sufficiently small, then
      \begin{itemize}
         \item $\tilde{L}_j\subseteq K_j$ and $f_j^{N_j}(\tilde{L}_j)$ is disjoint from $R_j$;
         \item $\lvert\re g_{j+1}^{N_j+1}(\zeta)\rvert = \lvert \re (( f_j^{j+1}|_{V_{j+1}(f_j)})^{-1}(\tilde{\phi}_j(\zeta)))\rvert > j$;
         \item $\lvert \re g_{j+1}^{N_{j+1}}(\zeta+ t)\rvert = \lvert \re \tilde{\phi}_j(\zeta+ t + \eta )\rvert < 1$ for $1/t_j \leq t \leq t_j$.
      \end{itemize}
            (For the equality in the final bullet point, recall that $N_{j+1} = N_j + j + 2$.)
    Let $K_{j+1}\supseteq K$ be any closed half-strip contained in the interior of $\tilde{L}_j$ and with $K\subseteq \interior(K_{j+1})$,  chosen sufficiently small such that 
      \begin{equation}\max_{z\in \partial K_{j+1}} \dist(z, K) \leq \frac{1}{j+1}.\end{equation} 

   The set $A_{j+1}$ is the union of three 
       sequences of pairwise disjoint topological closed strips and half-strips, each tending uniformly to $\infty$, 
       and therefore satisfies the hypotheses of Arakelyan's theorem. 
       
       Observe that  $g_{j+1}^{N_{j+1}}$ is defined and injective on $\tilde{L}_j$. We claim that $\lvert g_{j+1}'\rvert$ is bounded from above 
       and below by positive constants on $g_{j+1}^k(\tilde{L}_j)$ for $0\leq k < N_{j+1}$. 

      This holds by the inductive hypothesis for $0\leq k < N_j$. Since $g_{j+1}^{N_j+1}(\tilde{L}_j)\subseteq V_{j+1}(f_j)$, the claim
        also holds for $N_j+1 \leq k < N_{j+1}$, by Lemma~\ref{lem:strips}. So it remains to establish it for $k=N_j$. The derivatives of 
        the iterated inverse branches of $f_j$ used
       in the definition of $g_{j+1}$ on $f_j^{N_j}(\tilde{L}_j)$ are bounded from above and below by positive constants, again by the inductive hypothesis and
       Lemma~\ref{lem:strips}. For $\lvert \tilde{\phi}_j'\rvert$, this follows from
        Lemma~\ref{lem:stripderivative}, applied to a map obtained from $\tilde{\phi}_j$ by pre- and post-composition with affine maps.

  In particular, $\dist(g_{j+1}^k(K_{j+1}),g_{j+1}^k(\partial\tilde{L}_j))>0$ for $0\leq k\leq N_j$, and 
     $g_{j+1}$ is uniformly continuous at every point of $g_{j+1}^k(K_{j+1})$ for $0\leq k < N_j$. Observe furthermore that $g_{j+1}$ is uniformly continuous at every point of
     $g_{j+1}^k(K_j)$ for $0\leq k < N_j$ 
     by~\ref{item:eremenkoderivative}, and also, trivially, at every point of $g_{j+1}^{N_j}(\partial K_j)=Q_j\subseteq R_j$.
               
  We now claim that, if $\eps_{j+1}\leq \eps_j/2$ is sufficiently small, then any entire function $f$
      with $\lvert f(z) - g_{j+1}(z)\rvert \leq 2\eps_{j+1}$ on $A_{j+1}$ satisfies the following properties.
      \begin{enumerate}[(1)]
         \item $f^{N_{j+1}}$ is injective on $K_{j+1}$, with $\lvert f'\rvert$ bounded above and below by positive constants on
            $\bigcup_{k=0}^{N_{j+1}-1}f^k(K_{j+1})$, and $f$ is uniformly continuous at every point of this set;\label{item:e_ind_deriv}
         \item $f^{N_j+1}(\partial K_j) \subseteq D$;\label{item:e_ind_boundary}
         \item $f^{N_j+1}(K_{j+1})\subseteq V_{j+1}(f)$ and $\dist(f^{N_{j+1}}(K_{j+1}),\partial T_{j+1})>0$;\label{item:e_ind_Vj}
         \item $\lvert \re f^{N_j+1}(\zeta)\rvert > j$;\label{item:eremenkoescaping2}
         \item $\lvert \re f^{N_{j+1}}(\zeta+ t)\rvert < 1$ for $1/t_j \leq t \leq t_j$.\label{item:eremenkobungee2}
      \end{enumerate}

         Observe that all of these properties hold for $g_{j+1}$ itself. Then~\ref{item:e_ind_boundary} to~\ref{item:eremenkobungee2} follow
            from Lemma~\ref{lem:approx2},  which we may apply because of the
            above facts concerning the uniform continuity of $g_{j+1}$. Property~\ref{item:e_ind_deriv} follows from Corollary~\ref{cor:approx},
            applied with $G = \bigcup_{k=0}^{N_{j+1}}(\interior(\tilde{L}_j))$.
     
            Apply Arakelyan's theorem to obtain a function $f_{j+1}$, with
         $\lvert f_{j+1}(z) - g_{j+1}(z)\rvert \leq \eps_{j+1}$ on $A_{j+1}$. 
         Then~\ref{item:eremenkofj} holds by~\ref{item:e_ind_deriv} and~\ref{item:e_ind_Vj}. Property~\ref{item:eremenkoinfinity} holds
         by definition of $g_{j+1}$. Property~\ref{item:eremenkoderivative} also holds by~\ref{item:e_ind_deriv}, while~\ref{item:eremenkoAj}
          and~\ref{item:eremenkoeps} are immediate from the definitions of $A_{j+1}$ and $\eps_{j+1}$. This
          concludes
         the inductive construction. 
         
         By~\ref{item:eremenkoAj}, we have 
         $A_{j+1}\supseteq A_j$ and $\bigcup_{j=1}^\infty A_j = \C$. 
         By~\ref{item:eremenkoeps}, the functions $(f_j)$ form a Cauchy sequence
         on every $A_j$, and thus converge locally uniformly to 
         an entire function~$f$ with $\lvert f(z) - g_j(z)\rvert \leq 2\eps_j$ for all $j\geq 0$
         and all $z\in A_j$. 
         
        Properties~\ref{item:eremenkohalfstrips}  to~\ref{item:eremenkoimage} follow directly from the construction. 
       Indeed, 
        Property~\ref{item:eremenkohalfstrips} holds by
         definition of $K_j$ and property~\ref{item:eremenkodisc} by
         choice of $\eps_0$ and Lemma~\ref{lem:strips}.
         Claim~\ref{item:eremenkoboundary} is a consequence of~\ref{item:e_ind_boundary}, while~\ref{item:eremenkoimage}
            follows from~\ref{item:e_ind_deriv} and~\ref{item:e_ind_Vj}. 

       Finally, property~\ref{item:eremenkobungee} follows 
         from~\ref{item:eremenkobungee2} by the
         statement on real parts in 
         Lemma~\ref{lem:strips}; likewise,~\ref{item:eremenkoescaping} follows from~\ref{item:eremenkoescaping2}. 
\end{proof} 

\begin{rmk}
Every point of $J(f)$ is an accumulation point of unbounded connected components of $I(f)$ \cite[Theorem~1]{rippon-stallard05}. Using the notation of the proof of Theorem~\ref{thm:eremenkopoint}, it follows that $X=\{\zeta\}$ is accumulated on by unbounded components of $I(f)$ lying in strips of the form \mbox{$\textup{int}(K_{n})\setminus K_{n+1}$}.
\end{rmk}

\begin{rmk}\label{rmk:unbounded-wd}
We can modify the proof of Theorem~\ref{thm:eremenkopoint} by letting the height of the half-strips $(K_n)$ tend to a positive constant rather than to $0$, so that $K=\bigcap K_n$ is a closed half-strip. In this way, we obtain a transcendental entire function $f$ for which $U=\textup{int}(K)$ is an (unbounded) oscillating wandering domain such that $\partial U\cap I(f)\neq\emptyset$. The set $\partial U \cap I(f)$ has zero harmonic measure in $\partial U$ by~\cite[Theorem~1.2(a)]{rippon-stallard11}, and is therefore
 totally disconnected. Hence $\partial U$ contains a singleton component of $I(f)$.
\end{rmk}

We now prove Theorem~\ref{thm:eremenkocounterexample}, which is a more general version of Theorem~\ref{thm:eremenkopoint}.

\begin{proof}[Proof of Theorem~\ref{thm:eremenkocounterexample}]
Let 
    $X\subseteq\C$ be a full plane continuum. As in the proof of Theorem~\ref{thm:eremenkopoint}, we may assume that $X\subseteq T_0$. 
    Let $\zeta\in X$ be a point with maximal real part, and let $K$ be the union of 
    $X$ with the horizontal ray $\zeta + [0,+\infty)$. 
    
  We proceed with the recursive construction as before, but must adjust the
    definition of $\tilde{L}_j$ and $K_{j+1}$. These sets can no longer be chosen
    to be straight horizontal strips. Instead, each such set will be what we will term a \emph{decorated horizontal half-strip}, that is, 
    a topological half-strip whose intersection with some right half-plane is a straight horizontal half-strip.
    Let $K_0$ be 
    any decorated horizontal half-strip such that $K_0\subseteq T_0$ and
    $K\subseteq \interior(K_0)$.
    
    In the recursive step of the construction (when $K_j$ has been defined), we first let 
     $L_j\subseteq \interior(K_j)$
    be a closed horizontal half-strip with
    $\zeta+\eps \in\partial L_j$ and $\zeta + (\eps,+\infty)\subseteq \interior(L_j)$,
    where $\eps < 1/t_j$. 
    Exactly as in the proof of Theorem~\ref{thm:eremenkopoint}, we find a 
    conformal map $\phi_j\colon \interior(L_j)\to W_j$, where $W_j\subseteq T_{j+1}$ 
     is a horizontal strip such that $\phi_j(+\infty) = +\infty$, 
     $\phi_j(\zeta + \eps) = -\infty$, and $\lvert \re \phi_j(\zeta + t)\rvert < 1$ for
        $1/t_j \leq t \leq t_j$. 
        
   Now let $\hat{L}_j\supseteq L_j$ be obtained from 
     $L_j$ by adding a small neighbourhood of $X$ to $L_j$, chosen in such a way that $\hat{L}_j$ is a decorated horizontal half-strip. (That this is possible
     follows from Lemma~\ref{lem:Kn}.) Let 
     $\hat{\phi}_j\colon \interior(\hat{L}_j)\to W_j$ be a conformal isomorphism with 
     $\hat{\phi}_j(\zeta+1) = \phi_j(\zeta+1)$ and $\hat{\phi}_j(+\infty)=+\infty$. We suppose that 
     $\interior(\hat{L}_j)$ is sufficiently close to $\interior(L_j)$ in the sense
     of Carath\'eodory kernel convergence (seen from $\zeta + 1$); compare \cite[Section~1.4]{pommerenke92}. Then $\hat{\phi}_j$ still maps 
     $\zeta + [1/t_j,t_j]$ to points at real parts less than~$1$, while 
         \begin{equation}\label{eqn:eremenkoXbig} \big\lvert \textup{Re}\, (f_j^{j+1}|_{V_{j+1}(f_j)})^{-1}(\hat{\phi}_j(z))\big\rvert > j \end{equation}
       for all $z\in X$. 
       
     We now let $K_{j+1} \subseteq \tilde{L}_j \subseteq \hat{L}_j$ be 
       slightly smaller topological half-strips and let 
       $\tilde{\phi}_{j}$ be the restriction of $\hat{\phi}_j$ to $\tilde{L}_j$. We define 
       $g_{j+1}$ as in~\eqref{eqn:eremenkog}. Just as in the proof of Theorem~\ref{thm:eremenkopoint}, the derivative 
       $\lvert \tilde{\phi}_{j}'\rvert$ is bounded from above and below on $\tilde{L}_j$ by Lemma~\ref{lem:stripderivative}. The 
       remainder of the proof proceeds as before, except that~\ref{item:eremenkoescaping2} is replaced by the
       condition that $\lvert \re f^{N_j+1}(z)\rvert > j$ for all $z\in X$, which is possible by~\eqref{eqn:eremenkoXbig}.
       
   For the resulting function $f$, it follows that $X\subseteq I(f)$, while $\zeta + (0,\infty)\subseteq \BU(f)$. Furthermore, 
    every point of $K$ is separated from any other point of $\C$ by some element of $\partial K_j$. As all points of $\partial K_j$ are eventually mapped to
    $D$, we see that $K$ is a connected component of $I(f)\cup \BU(f)$, and $X$ is a connected component of $I(f)$, as claimed.
\end{proof}

\begin{rmk}\label{rem:order-classB}
For the function $f$ constructed in the proof of Theorem~\ref{thm:eremenkocounterexample}, the set \mbox{$f^{-1}(\C\setminus \overline{D})$} has infinitely many components (at least one inside each strip $T_n$). Hence $f$~is of infinite order by the Denjoy--Carleman--Ahlfors theorem. On the other hand, the fact that $|f'|$ is bounded above on an unbounded set for which $|f|$ is large contradicts the expansivity property of functions in the class $\mathcal B$ \cite[Lemma~1]{eremenko-lyubich92} (see also \cite[Theorem~1.1]{rempe21}), so $f\notin \mathcal B$.     
\end{rmk}

To conclude the section, we show that~-- as mentioned in the introduction~--
 the condition that $K$ is full 
  in Theorem~\ref{thm:eremenkocounterexample} is necessary, but the 
  condition that $K$ is compact is not.
\begin{prop}\label{prop:compact-comp}
 Let $f$ be a transcendental entire function, and suppose that $K\subseteq I(f)$ is a compact connected component
    of $I(f)$. Then $K$ is full. 
 
 On the other hand, there exists a transcendental entire function for which 
   $I(f)$ has a bounded connected component that is not compact. 
\end{prop}
\begin{proof}
 Suppose that $K\subseteq I(f)$ is compact and connected, and let 
   $U$ be a bounded connected component 
  of $\C\setminus K$. There are two possibilities:
  either $U$ is a component of the Fatou set of $f$, or $U$ intersects the Julia set. 
   In the first case, $U\subseteq I(f)$ since all boundary points of $U$ escape \cite[Theorem~1.2 (a)]{rippon-stallard11}. In the second, $U$ intersects 
   the fast escaping set $A(f)$ defined by~\eqref{eqn:fastescapingset}~\cite[Lemma~3]{bergweiler-hinkkanen99}. 
   Every connected component of $A(f)$ is unbounded~\cite[Theorem~1]{rippon-stallard05}, and hence
   intersects $K$. We conclude that, in either case, $K$ is not a connected component of $I(f)$, as claimed. 
   
 To prove the second part of the proposition,  
    we can apply the proof of Theorem~\ref{thm:eremenkocounterexample}
    with $K = \overline{\DD}$, but modified similarly as in     
    Section~\ref{sec:maverickconstruction}
    so that a countable subset $Y\subseteq \partial\DD = \partial K$ belongs to
    $\BU(f)$. In other words, $\DD\subseteq I(f)$, but $Y\cap I(f)=\emptyset$. So the connected component 
    of $I(f)$ containing $\DD$ is a proper subset of $\overline{\DD}$. (We omit
    the details.) 
\end{proof}

\section{Maverick points and harmonic measure}

\label{sec:maverick}

In this section, we prove Theorem~\ref{thm:maverick}, showing that the set of maverick points of a wandering domain has zero harmonic measure with respect to the wandering domain. We begin with the following
 observation. 
 \begin{lem}[Maverick points and spherical diameter]\label{lem:maverick}
 Let $f$ be a transcendental entire function and suppose that $U$ is a wandering domain of $f$. Let $\zeta\in U$ and $z\in \partial U$. Then $z$ is maverick if and only if
  $\dist^{\#}(f^n(\zeta),f^n(z))\not\to 0$ as $n\to\infty$. In particular, if $\diam^{\#}(f^n(U))\to 0$ as $n\to\infty$, then $U$ has no maverick points.
 \end{lem}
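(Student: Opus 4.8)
The plan is to establish the stated equivalence by proving its two implications separately, and then to read off the ``in particular'' clause as a one-line corollary. The only non-elementary ingredient will be the following standard fact, which I will call $(\star)$: since $U$ is a Fatou component, the iterates $(f^n)$ form a normal family on $U$, and since $U$ is a \emph{wandering} domain, every locally uniform (spherical) limit function $h$ of a subsequence $(f^{n_k}|_U)$ is a constant in $\Ch$. I would justify $(\star)$ by recalling that such an $h$ is either holomorphic on $U$ or identically $\infty$ (a locally uniform spherical limit of holomorphic functions is of one of these two types), and that if $h$ were holomorphic and non-constant, then choosing $\zeta_0\in U$ with $h'(\zeta_0)\neq 0$ and applying Hurwitz's theorem (with Rouch\'e, for uniformity in the target value) on a small disc around $\zeta_0$ would force $f^{n_k}(U)$ to contain one fixed open disc around $h(\zeta_0)$ for \emph{all} large $k$; but the sets $f^{n_k}(U)$ lie in pairwise distinct, hence pairwise disjoint, Fatou components, a contradiction.

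First I would show that if $\dist^{\#}(f^n(\zeta),f^n(z))\not\to 0$, then $z$ is maverick; notably this direction does not use $(\star)$. Fixing $\eps>0$ and a subsequence $(n_k)$ with $\dist^{\#}(f^{n_k}(\zeta),f^{n_k}(z))\geq\eps$ for all $k$, and passing to a further subsequence using compactness of $\Ch$, I may assume $f^{n_k}(z)\to w$ and $f^{n_k}(\zeta)\to v$ with $v\neq w$. Then $w$ cannot be a limit function of $(f^{n_k}|_U)$: if $f^{n_{k_j}}|_U\to w$ locally uniformly along some subsequence, evaluating at $\zeta$ would give $f^{n_{k_j}}(\zeta)\to w$, contradicting $f^{n_{k_j}}(\zeta)\to v\neq w$. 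Hence $z$ is maverick by definition.

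Next I would prove the converse: if $\dist^{\#}(f^n(\zeta),f^n(z))\to 0$, then $z$ is not maverick. Let $(n_k)$ be any sequence with $f^{n_k}(z)\to w\in\Ch$; I must check that $w$ is a limit function of $(f^{n_k}|_U)$. Passing to a subsequence along which $f^{n_{k_j}}|_U$ converges locally uniformly (possible by normality) to some $h$, fact $(\star)$ gives that $h$ is the constant $c=h(\zeta)$. Since $\dist^{\#}(f^{n_{k_j}}(\zeta),f^{n_{k_j}}(z))\to 0$ and $f^{n_{k_j}}(z)\to w$, also $f^{n_{k_j}}(\zeta)\to w$, so $c=w$ and $f^{n_{k_j}}|_U\to w$ locally uniformly. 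Thus $w$ is a limit function of $(f^{n_k}|_U)$, as needed. Together the two implications give the equivalence; and since the left-hand criterion visibly does not depend on the choice of $\zeta\in U$, this argument also proves the parenthetical ``for any (and hence all) $w\in U$'' remark following Definition~\ref{dfn:maverick-points}.

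Finally, for the ``in particular'' statement: if $\diam^{\#}(f^n(U))\to 0$, then for any $\zeta\in U$ and $z\in\partial U$, continuity of $f^n$ gives $f^n(\overline U)\subseteq\overline{f^n(U)}$, so
\[ \dist^{\#}(f^n(\zeta),f^n(z))\leq\diam^{\#}\bigl(f^n(\overline U)\bigr)=\diam^{\#}\bigl(f^n(U)\bigr)\to 0, \]
and the equivalence just established shows that $z$ is not maverick; as $z\in\partial U$ was arbitrary, $U$ has no maverick points. I expect the only genuine subtlety to be the invocation of $(\star)$ — this is precisely the point at which the wandering hypothesis and normality enter — while everything else is a routine chase through subsequences together with compactness of $\Ch$.
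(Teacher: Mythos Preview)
Your proof is correct and follows essentially the same line as the paper's. The paper's argument is terser—it calls the equivalence ``a simple restatement of the definition'' and leaves implicit your fact $(\star)$ that limit functions on a wandering domain are constant—whereas you spell this out and justify it; but the logical content of the two proofs is the same.
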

 \begin{proof}
   The first claim is a simple restatement of the definition of a maverick point. Indeed, if 
     $f^{n_k}(z)\to w\in\Ch$ and $f^{n_k}(\zeta)\not\to w$ as $k\to\infty$, then 
     $\dist^{\#}(f^{n_k}(z),f^{n_k}(\zeta))\not\to 0$ as $k\to\infty$. Conversely, if 
     $(n_k)_{k=0}^\infty$ is a subsequence such that $\dist^{\#}(f^{n_k}(z),f^{n_k}(\zeta)) \geq \eps>0$ for all $k\in\N$, then we may pass to a further subsequence $({n_{k_j}})_{j=0}^\infty$ such
     that $f^{n_{k_j}}(z)$ and $f^{n_{k_j}}(\zeta)$ converge to distinct points on the sphere.
     
     The second claim follows immediately from the first. 
 \end{proof}

Our proof of Theorem~\ref{thm:maverick} is similar to that of \cite[Theorem~1.5]{osborne-sixsmith16}, but somewhat simpler, despite giving a stronger result. 
  As with~\cite[Theorem~1.5]{osborne-sixsmith16}, 
  we use the following
   lemma, which is \cite[Lemma~4.1]{osborne-sixsmith16} and is related to     
    \cite[Theorem~1.1]{rippon-stallard11}.
\begin{lem}\label{lem:Harmonic-measure-zero}
Let $(G_k)_{k=0}^\infty$ be a sequence of pairwise disjoint simply connected domains in~$\C$. Suppose that, for each $k\in\N$, $g_k:\overline{G}_{k-1}\to\overline{G}_k$ is analytic in $G_{k-1}$, continuous in $\overline{G}_{k-1}$, and satisfies $g_k(\partial G_{k-1})\subseteq \partial G_k$. Let 
$$
h_k=g_k\circ \cdots \circ g_2\circ g_1\quad \textup{ for } k\in\N.
$$
Let $\xi\in \Ch$, $\delta \in (0,1)$, $c>1$, and $z_0\in G_0$. Then
\begin{align*}
H=\big\{z\in\partial G_0\colon &\dist^{\#}(h_k(z),\xi)\geqslant c\delta \textup{ and } 
  \dist^{\#}(h_k(z_0),\xi)<\delta \textup{ for infinitely many } k\big\}
\end{align*}
has harmonic measure zero relative to $G_0$.
\end{lem}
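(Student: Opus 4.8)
The plan is to write $H$ as a $\limsup$ of Borel sets and then estimate harmonic measure via the Borel--Cantelli lemma. Set $W\defeq\{w\in\Ch\colon\dist^{\#}(w,\xi)<\delta\}$, $W'\defeq\{w\in\Ch\colon\dist^{\#}(w,\xi)<c\delta\}$ and $\mathcal{A}\defeq W'\setminus\overline{W}$; since $c>1$ we have $\dist^{\#}(\partial W,\partial W')\geq(c-1)\delta>0$. Write $B_k\defeq\{w\in\partial G_k\colon\dist^{\#}(w,\xi)\geq c\delta\}$, put $S\defeq\{k\in\N\colon\dist^{\#}(h_k(z_0),\xi)<\delta\}$, and define $E_k\defeq h_k^{-1}(B_k)\cap\partial G_0$ for $k\in S$ and $E_k\defeq\emptyset$ otherwise. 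Each $E_k$ is closed in $\partial G_0$ since $h_k$ is continuous on $\overline{G_0}$, and directly from the definitions $H=\limsup_{k\to\infty}E_k$. First I would dispose of the degenerate cases: if some $g_k$ is constant (or if $c\delta\geq\diam^{\#}(\Ch)$), then $H=\emptyset$. So I may assume each $g_k$ is non-constant, whence $g_k(G_{k-1})$ is a non-empty open connected subset of $\overline{G_k}$; since an open set meeting $\partial G_k$ also meets $\Ch\setminus\overline{G_k}$, this forces $g_k(G_{k-1})\subseteq G_k$, and inductively $h_k(G_0)\subseteq G_k$ while $h_k(\partial G_0)\subseteq\partial G_k$.

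The first step is monotonicity of harmonic measure under the holomorphic maps $h_k$:
\[ \omega(z_0,E_k,G_0)=\omega\bigl(z_0,h_k^{-1}(B_k),G_0\bigr)\leq\omega\bigl(h_k(z_0),B_k,G_k\bigr)\qquad(k\in S). \]
This ``pull-back inequality'' is classical. One clean derivation uniformises: choosing conformal isomorphisms $\psi_0\colon\DD\to G_0$ with $\psi_0(0)=z_0$ and $\phi_k\colon\DD\to G_k$ with $\phi_k(0)=h_k(z_0)$, the composition $F_k\defeq\phi_k^{-1}\circ h_k\circ\psi_0\colon\DD\to\DD$ is holomorphic and fixes $0$; Lindel\"of's theorem (a univalent function is a normal function) gives $\phi_k^{*}(F_k^{*}(\zeta))=h_k(\psi_0^{*}(\zeta))$ for a.e.\ $\zeta\in\partial\DD$; and L\"owner's lemma for $F_k$ gives $\lvert F_k^{-1}(\phi_k^{-1}(B_k))\rvert\leq\lvert\phi_k^{-1}(B_k)\rvert$ for radial-limit preimages. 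Passing back through the conformal maps, which identify harmonic measure seen from the centre with normalised Lebesgue measure on the circle, yields the inequality.

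The heart of the proof is to bound $\omega(h_k(z_0),B_k,G_k)$, and here the pairwise disjointness of the $G_k$ is essential. For $k\in S$ the point $h_k(z_0)$ lies in $W$ while $B_k\subseteq\Ch\setminus W'$, so any curve in $G_k$ joining $h_k(z_0)$ to $B_k$ must traverse $G_k\cap\mathcal{A}$ from its trace on $\partial W$ to its trace on $\partial W'$. A length--area estimate then gives
\[ \omega\bigl(h_k(z_0),B_k,G_k\bigr)\;\leq\;\frac{C}{\lambda_k}\;\leq\;\frac{C\,\operatorname{area}^{\#}(G_k\cap\mathcal{A})}{\bigl((c-1)\delta\bigr)^{2}}, \]
where $\lambda_k$ is the extremal distance inside $G_k\cap\mathcal{A}$ between those two traces (the harmonic measure of the far side of a conformal quadrilateral being $\leq C/\lambda_k$ for an absolute constant $C$), and the last step uses the Cauchy--Schwarz lower bound $\lambda_k\geq\bigl(\dist^{\#}(\partial W,\partial W')\bigr)^{2}/\operatorname{area}^{\#}(G_k\cap\mathcal{A})$. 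Since the $G_k$ are disjoint, $\sum_k\operatorname{area}^{\#}(G_k\cap\mathcal{A})\leq\operatorname{area}^{\#}(\Ch)<\infty$, so combining with the previous step,
\[ \sum_k\omega(z_0,E_k,G_0)\;\leq\;\frac{C}{\bigl((c-1)\delta\bigr)^{2}}\sum_k\operatorname{area}^{\#}(G_k\cap\mathcal{A})\;<\;\infty. \]
By the Borel--Cantelli lemma applied to the probability measure $\omega(z_0,\cdot\,,G_0)$ on $\partial G_0$, the set $H=\limsup_k E_k$ has harmonic measure zero, as required.

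I expect the main obstacle to be making the length--area estimate of the third paragraph fully rigorous: one should pass to a conformal coordinate centred at $\xi$ in which $\mathcal{A}$ becomes a flat annulus, isolate the component of $G_k\cap W'$ containing $h_k(z_0)$, identify the two ``ends'' of its intersection with $\mathcal{A}$, and justify (via the maximum principle, or the strong Markov property of Brownian motion) that $\omega(h_k(z_0),B_k,G_k)$ is controlled by the crossing harmonic measure of the resulting quadrilateral. The pull-back inequality of the second paragraph is standard but also requires a little care with Borel (rather than open) boundary sets and with the almost-everywhere validity of the Lindel\"of step.
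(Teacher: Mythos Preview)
The paper does not give its own proof of this lemma; it quotes it verbatim as \cite[Lemma~4.1]{osborne-sixsmith16}, which in turn is modelled on \cite[Theorem~1.1]{rippon-stallard11}. Your overall architecture---write $H=\limsup_k E_k$, pull back via L\"owner's lemma to reduce to bounding $\omega(h_k(z_0),B_k,G_k)$, then use disjointness of the $G_k$ to make these summable and invoke Borel--Cantelli---is exactly the strategy of those references, so in that sense you have rediscovered the intended proof.

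The one place where your argument diverges from the standard one is the estimate on $\omega(h_k(z_0),B_k,G_k)$. You go via extremal length and area of $G_k\cap\mathcal{A}$, which works but, as you note yourself, needs care (multiple components of $G_k\cap\mathcal{A}$, passing from harmonic measure in $G_k$ to a crossing estimate in the collar; also your constant $C$ cannot be absolute---take $G_k\supset W'$, where $\omega=1$ while $\lambda_k=\tfrac{1}{2\pi}\log c$). The route taken in \cite{rippon-stallard11} is shorter: with $\tilde V_k$ the component of $G_k\cap W'$ containing $h_k(z_0)$ and $\alpha_k=\partial\tilde V_k\cap\partial W'$, the maximum principle gives
\[
  \omega(h_k(z_0),B_k,G_k)\le\omega(h_k(z_0),\alpha_k,\tilde V_k)\le\omega(h_k(z_0),\alpha_k,W'),
\]
the last step since $\tilde V_k\subset W'$ and $\alpha_k\subset\partial W'$. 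Because $h_k(z_0)\in W$ lies at spherical distance at least $(c-1)\delta$ from $\partial W'$, Harnack bounds the right-hand side by $C(c)\,\lvert\alpha_k\rvert/\lvert\partial W'\rvert$. Since the open sets $G_k$ are pairwise disjoint, the arcs $G_k\cap\partial W'\supset\alpha_k$ are pairwise disjoint and $\sum_k\lvert\alpha_k\rvert\le\lvert\partial W'\rvert$, giving $\sum_k\omega(h_k(z_0),B_k,G_k)\le C(c)<\infty$ with no extremal-length machinery. Either route closes the argument; the arc-length comparison just avoids the technical points you flagged.
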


\begin{proof}[Proof of Theorem~\ref{thm:maverick}]
Let $f$ be a transcendental entire function with a wandering domain $U$. As
 mentioned in the introduction, if $U$ is multiply connected, then 
 $\overline{U} \subseteq I(f)$; see \cite[Theorem 2]{rippon-stallard05}. Hence, we assume that
  $U$ is simply connected and fix some $z_0 \in U$.

For $0<\delta<1$ and $\xi \in \hat{\mathbb{C}}$ define
$H(\xi,\delta)$ to consist of all points $z\in \partial U$ for which there are infinitely many $n\in\N$ such that 
$\dist^{\#}(f^{n}(z_0),\xi)<\delta$ and $\dist^{\#}(f^{n}(z),\xi)>2\delta$. We claim that the harmonic measure of $H(\xi,\delta)$ relative to $U$ is zero.

Let $(n_k)$ be the sequence of all times for which $\dist^{\#}(f^{n_k}(z_0),\xi)<\delta$. (If such a sequence does not exist, then $H(\xi, \delta)=\emptyset$ and there is nothing to prove.)

Set $G_0\defeq U$, $g_k\defeq f^{n_k-n_{k-1}}$, and let $G_k$ be the Fatou component of $f$ that contains the image $f^{n_k}(U)$. Then, each $G_k$ is a simply connected wandering domain and satisfies $g_k(\partial G_{k-1}) \subseteq \partial G_k$.  By Lemma~\ref{lem:Harmonic-measure-zero}, the harmonic measure of $H(\xi,\delta)$ relative to $U$ is zero as claimed. 

We claim that the union of the sets $H(\xi,\delta)$, where $\delta\in\Q$ and 
  $\xi\in \Q(i)$ are rational, 
 contains all maverick points. By Lemma \ref{lem:maverick}, if $z\in \partial U$ is a maverick point, then there exists $\xi'\in \omega(U,f)$ and an increasing sequence $(n_k)_{k=0}^\infty$ such that 
$f^{n_k}(z_0)\to \xi'$ but $f^{n_k}(z)\not\to \xi'$. Passing to a further subsequence if necessary, we may assume that there exists $\delta \in \mathbb{Q}$ with $0<\delta<1$ such that $\dist^{\#}(f^{n_{k}}(z_0),\xi')<\delta/2$ and $\dist^{\#}(f^{n_{k}}(z),\xi')>3\delta$
for all $k$. Hence, if $\xi\in\mathbb{Q}(i)$ with $\dist^{\#}(\xi,\xi')<\delta/2$, then $z\in H(\xi,\delta)$.
Thus, 
\[\big\{z\in\partial U: \text{$z$ is a maverick point}\big\} \subseteq \bigcup_{\xi\in \mathbb{Q}(i)} \bigcup_{\delta \in \mathbb{Q}\cap{(0,1)}} H(\xi,\delta).\]

Since $H(\xi,\delta)$ has zero harmonic measure relative to $U$ and by countable additivity of the harmonic measure, the set of maverick points has zero harmonic measure relative to~$U$.
\end{proof}
\begin{rmk}
  Note that the proof does not require us to distinguish whether $U$ is escaping,
     oscillating or orbitally bounded. (Recall from the introduction that the latter means that every point in $U$ has bounded
     orbit, and that it is unknown whether orbitally bounded wandering domains exist.)
\end{rmk}

\section{Proof of Proposition~\ref{prop:spikes}}\label{sec:spikes}

Proposition~\ref{prop:spikes} is a consequence of the following fact. 
\begin{prop}[Conformal maps taking boundary points to infinity]\label{prop:spikes2}
  Let $\Xi\subseteq \partial \DD$ be a compact set of zero logarithmic capacity. Then there exists a simply connected domain 
    $\Omega\subseteq \Sigma \defeq \{ x+iy\colon x > 0 \text{ and } \lvert y\rvert <\pi/2 \}$ and a conformal isomorphism
    $\phi\colon \DD\to \Omega$ such that $\phi$ extends continuously to $\partial\DD$ with
    $\phi(\xi)=\infty$ for all $\xi\in \Xi$.  
\end{prop}
\begin{proof}[Proof of Proposition~\ref{prop:spikes}, using Proposition~\ref{prop:spikes2}]
   First observe that we may assume that $V=\tilde{\Sigma}\defeq \{x + iy\colon \lvert y\rvert < \pi\}$, that 
     $\Delta$ contains all points of $V$ of modulus greater than some $R>0$, and that $\nu = 0$. 
      Indeed, otherwise let $\tilde{V}\subseteq V$ be a Jordan domain that contains $\nu$ and whose boundary passes through $\Delta$, and map 
      $\tilde{V}$ conformally onto $\tilde{\Sigma}$, taking $\nu$ to zero and some point of $\partial \tilde{V}\cap \Delta$ to $+\infty$. 
      
  Now let $\phi$ and $\Omega$ be as in Proposition~\ref{prop:spikes2}. For $0<\lambda<1$, define 
    \[ \tilde{\phi}\colon \DD\to \tilde{\Sigma}; \qquad z\mapsto \phi(\lambda z)-\phi(0). \]
    Since $\lvert \im \phi(0)\rvert < \pi/2$, this map does indeed take values in $\tilde{\Sigma}$, and clearly $\tilde{\phi}(0)=0$. 
    Moreover, if $\lambda$ is chosen sufficiently close to $1$, then $\re \tilde{\phi}(\xi)>R$ for all $\xi\in \Xi$. Setting
    $W \defeq \tilde{\phi}(\DD)$ for such $\lambda$, the proof is complete. 
\end{proof}    

Proposition~\ref{prop:spikes2} follows from a powerful result
 of Bishop about boundary interpolation sets for conformal maps~\cite{bishop2006}.

\begin{proof}[Proof of Proposition~\ref{prop:spikes2} using {\cite[Theorem~1]{bishop2006}}]
Fix an interval $J\subsetneq\partial \DD$ that contains
  $\Xi$. We may choose a homeomorphism 
   $g\colon\DD\to \Sigma$ such that $g$ extends continuously to $\partial\DD$ with 
   $g(\xi)=\infty$ for all $\xi\in J$. Of course, $g$ cannot be conformal since it collapses a whole interval of the unit circle to a point. 
   However, by~\cite[Theorem~1]{bishop2006}, there is
   a domain $\Omega\subseteq\Sigma$ and a conformal isomorphism $f\colon \DD\to \Omega$ that extends continuously to $\partial\DD$
   and $f|_{\Xi} = g|_{\Xi}$. 
 \end{proof}

  In fact, Proposition~\ref{prop:spikes2} is considerably weaker 
       than~\cite[Theorem~1]{bishop2006}. Indeed, it is a preliminary step in the
   proof of~\cite[Theorem~6]{bishop2006}, which in turn is used in the proof of~\cite[Theorem~1]{bishop2006}. 
    For the reader's convenience, and in order to be able to discuss how the proof simplifies when
    $\Xi$ is finite, we sketch a direct proof of Proposition~\ref{prop:spikes2},
    following~\cite{bishop2006}.

\begin{proof}[Proof of Proposition~\ref{prop:spikes2}]
   Let us move from the unit disc $\DD$ to the upper half-plane $\HH^+$. We are given a compact set $E\subseteq \R$ of zero logarithmic capacity,
     and wish to construct a conformal isomorphism between $\HH^+$ and  a subset of $\Sigma$ that maps all points of $E$ to $\infty$. 
     
  When $E$ is finite, such a conformal isomorphism can be explicitly written down. Let $p_1,\dots,p_n$ be the elements of $E$, in increasing order. 
     Set $\Sigma' = \{ x + iy\colon 0 < y < \pi\}$ and define 
        \begin{equation}\label{eqn:psi} \psi\colon \HH^+\to\Sigma'; \quad z\mapsto \pi i 
        - \frac{1}{n} \cdot \sum_{j=1}^n \Log(z-p_j),
        \end{equation}
   where $\Log$ is the principal branch of the logarithm. Since $0<\arg(z-p_j)<\pi$ for $z\in \HH^+$ and all $j$, we see that $\psi$ does indeed take values in $\Sigma'$.
    Moreover, 
      \[ \psi'(z) = \frac{-1}{n}\cdot \sum_{j=1}^{n} \frac{1}{z-p_j}. \]
   Since $\im(z-p_j)=\im z>0$ for all $z\in\HH^+$ and all $j$, it follows that $\im \psi'(z)>0$ for all $z\in\HH^+$. Hence 
     $\psi$ is a conformal map defined on $\HH^+$ (see~\cite[Lemma~1]{eremenkoyuditskii12}). 
     Clearly $\re \psi(z)\to +\infty$ as $z\to p_j$ for any $j$, and $\psi$ extends continuously to $\R\cup\{\infty\}$ (with the convention that it takes
     the value $+\infty$ 
     for all $z\in  E$, and the value $-\infty$ at $\infty$). 
    We obtain the desired map by postcomposing $\psi$ with a conformal isomorphism from $\Sigma'$ to $\Sigma$ that fixes $+\infty$, and the
     proof (in the case of finite $E$) is complete.
     
    For the more general case, where $E$ has zero logarithmic capacity, we replace the sum~\eqref{eqn:psi} by an integral, using 
     a theorem of Evans; see \cite[Theorem~E.2]{garnett-marshall05}. This states that there exists a probability measure $\mu$ supported on $E$ such that the potential
       \[ u(z) = \int_{\R} -\Log\lvert z-x\rvert \,\deriv \mu(x) \]
       tends to infinity at every point of $E$. Now define 
        \begin{equation}\label{eqn:newpsi} \psi\colon \HH^+\to\Sigma'; \quad z\mapsto \pi i - \int_{\R} \Log(z-x)\,\deriv\mu(x).
        \end{equation}
       Then $u = \re \psi$, and in particular $\psi(z)\to +\infty$ as $z\to E$. The proof now proceeds exactly as in the case of finite $E$. 
\end{proof}
\begin{rmk}
  Observe that the formula~\eqref{eqn:psi} for the finite case is a special case of~\eqref{eqn:newpsi}. Conversely, the function $\psi$ in~\eqref{eqn:newpsi} 
    is the limit of the corresponding functions~\eqref{eqn:psi} for a suitably chosen sequence of finite subsets of $E$. (See~\cite[Proof of Theorem~E.2]{garnett-marshall05}.) 
    That this can be done in such a way that $\psi(z)\to\infty$ at all points of $E$ follows from (and is indeed equivalent to) the fact that $E$ has
    logarithmic capacity zero; see~\cite[Theorem~E.1]{garnett-marshall05}. 
    
    \begin{figure}
\begin{center}
\def\svgwidth{\textwidth}
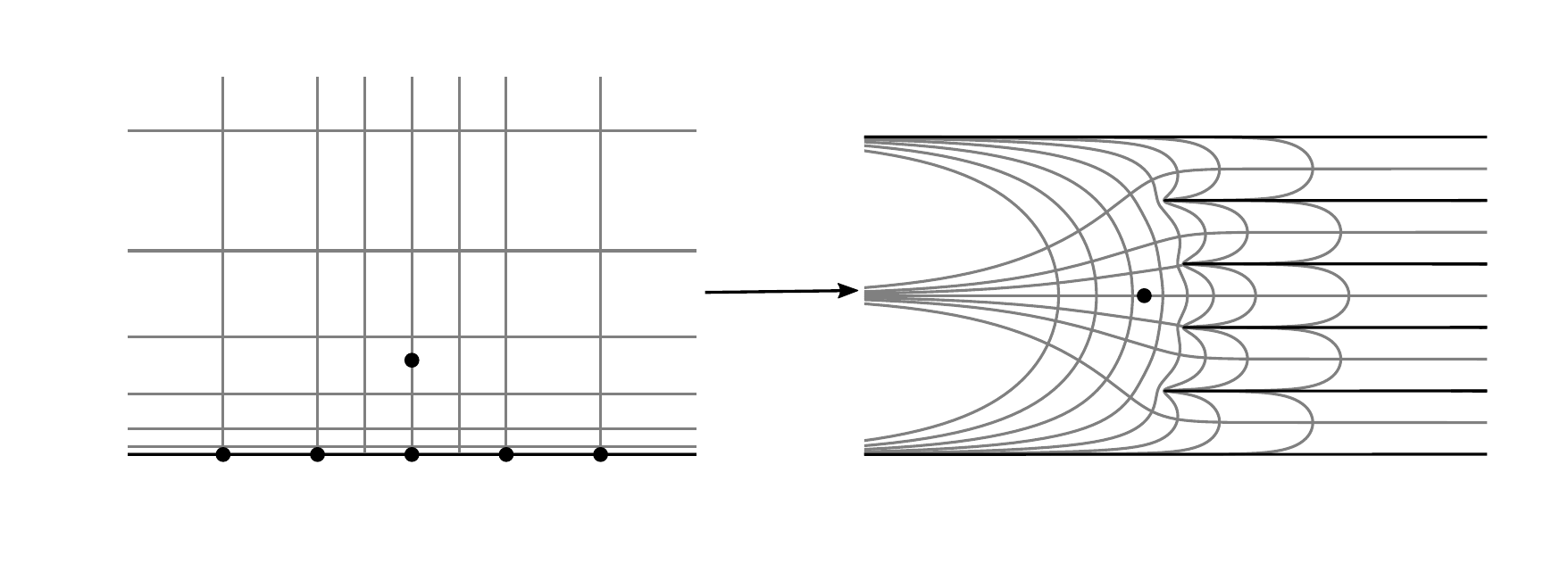 
\end{center}
\caption{The conformal map $\psi: \HH^+ \to \Sigma'$ in the proof of Proposition~\ref{prop:spikes2} with the points $p_j\in E$ such that $\psi(p_j)=\infty$.}\label{fig:strip}
\end{figure}

  The formula~\eqref{eqn:psi} is a Schwarz-Christoffel mapping for a degenerate polygon 
     with $\#E + 1$ zero angles (at the images of $-\infty$ and the points of $E$) and $\#E-1$ angles of $2\pi$ (at the images of the critical points of $\psi$, of which there 
     is one in every bounded complementary interval of $E$). For further discussion of these maps, and their relationship to real polynomials
     with only real critical points, see~\cite[Section~2]{eremenkoyuditskii12}.
\end{rmk}

\newcommand{\etalchar}[1]{$^{#1}$}
\providecommand{\bysame}{\leavevmode\hbox to3em{\hrulefill}\thinspace}
\providecommand{\MR}{\relax\ifhmode\unskip\space\fi MR }
\providecommand{\MRhref}[2]{%
  \href{http://www.ams.org/mathscinet-getitem?mr=#1}{#2}
}
\providecommand{\href}[2]{#2}

\end{document}